\def\la{\langle}
\def\ra{\rangle}
\def\bee{\begin{equation*}}
\def\eee{\end{equation*}}
\def\e{\epsilon}
\def\lf{\left}
\def\heat{\lf(\frac{\p}{\p t}-\Delta\ri)}
\def\ijb{i\bar{j}}
\def\ri{\right}
\def\hg{\hat{g}}
\def\a{{\alpha}}
\def\tphi{{\tilde{\phi}}}
\def\p{\partial}
\newcommand\R{{\mathbb R}}
\newcommand\C{{\mathbb C}}
\def\ii{\sqrt{-1}}
\def\jbar{{\bar\jmath}}
\def\tr{\tilde{R}_{ij}}
\def\K{K\"ahler }
\def\KR{K\"ahler-Ricci }
\def\A{Amp\`{e}re }
\def\ddbar{\partial\bar\partial}
\def\be{\begin{equation}}
\def\ee{\end{equation}}
\def\lf{\left}
\def\ri{\right}
\def\a{{\alpha}}
\def\txi{\widetilde \xi}
\def\e{\epsilon}
\def\ijb{{i\jbar}}
\def\Rm{\text{\rm Rm}}
\def\p{\partial}
\def\tr{\text{\rm tr}}
\def\C{\Bbb C}
\def\p{\partial}
\def\p{\partial}
\def\C{\Bbb C}
\def\ii{\sqrt{-1}}
\newtheorem{thm}{Theorem}[section]
\newtheorem{lem}{Lemma}[section]
\newtheorem{prop}{Proposition}[section]
\newtheorem{cor}{Corollary}[section]
\theoremstyle{definition}
\theoremstyle{remark}
\newtheorem{rem}{Remark}
\numberwithin{equation}{section}
\begin{document}
\title{Deforming complete Hermitian metrics with unbounded curvature}
\author{Albert Chau$^1$}
\address{Department of Mathematics,
The University of British Columbia, Room 121, 1984 Mathematics
Road, Vancouver, B.C., Canada V6T 1Z2} \email{chau@math.ubc.ca}

\author{Ka-Fai Li}

\address{Department of Mathematics,
The University of British Columbia, Room 121, 1984 Mathematics
Road, Vancouver, B.C., Canada V6T 1Z2} \email{kfli@math.ubc.ca}

\author{Luen-Fai Tam$^2$}

\thanks{$^1$Research
partially supported by NSERC grant no. \#327637-06}
\thanks{$^2$Research partially supported by Hong Kong RGC General Research Fund  \#CUHK 403108}

\address{The Institute of Mathematical Sciences and Department of
 Mathematics, The Chinese University of Hong Kong,
Shatin, Hong Kong, China.} \email{lftam@math.cuhk.edu.hk}
\thanks{\begin{it}2000 Mathematics Subject Classification\end{it}.  Primary 53C55, 58J35.}
%\thanks{\begin{it}Key words and phrases\end{it}.  Non-compact K\"ahler-Einstein metrics, %K\"ahler-Ricci flow, parabolic  Monge-Amp\`{e}re  equation.}

\begin{abstract} We produce solutions to
the K\"ahler-Ricci flow emerging from complete initial metrics $g_0$ which are $C^0$ Hermitian limits of \K metrics.  Of particular interest is when $g_0$ is \K  with unbounded curvature. We provide such solutions for a wide class of $U(n)$-invariant \K metrics $g_0$ on $\C^n$, many of which having unbounded curvature.  As a special case we have the following Corollary: The \KR flow \eqref{krf} has a smooth short time solution starting from any smooth complete $U(n)$-invariant \K metric on $\C^n$ with either non-negative or non-positive holomorphic bisectional curvature, and the solution exists for all time in the case of non-positive curvature.

\noindent{\it Keywords}:  K\"ahler-Ricci flow, parabolic  Monge-Amp\`{e}re  equation, $U(n)$ invariant \K metrics
\end{abstract}

\maketitle\markboth{Albert Chau, Ka-Fai Li and Luen-Fai Tam} {On the existence of K\"ahler-Ricci flow of metrics with unbounded curvature}
\section{Introduction}

Let $(M^n,g_0)$ be a complete noncompact Riemannian manifold. The Ricci flow is the following evolution equation
\be\label{rf}
 \left\{
   \begin{array}{ll}
     \displaystyle\frac{\partial}{\partial t} g_{ij} =-2R_{ij} \\
     g(0)  = g_0.
   \end{array}
 \right.
 \ee
 In \cite{Shi1}  Shi proved  that if the curvature of $g_0$ is bounded then \eqref{rf} has a solution $g(t)$ up to some time $T>0$ depending only on the curvature bound for $g_0$ and the   dimension $n$ of $M$ such that the curvature is bounded in space-time. If in addition that $(M^n,g_0)$ is a \K manifold with complex dimension $n$, then Shi \cite{Shi2} proved that the solution $g(t)$ is also K\"ahler. Hence $g(t)$ satisfies \KR flow   equation:

 \be\label{krf}
 \left\{
   \begin{array}{ll}
     \displaystyle\frac{\partial}{\partial t} g_{\ijb} =-R_{\ijb} \\
     g(0)  = g_0.
   \end{array}
 \right.
 \ee
See   Theorem \ref{shishortime} for more details.

There are many results of existence without assuming  that the initial condition $g_0$ has bounded curvature. In \cite{Si}, Simon proved that   starting from any sufficiently small $C^0$ perturbation $g_0$ of a complete Riemannian metric with bounded curvature, there is a short time solution of the Ricci harmonic heat flow.  We also refer to the works \cite{KL, SS1} where the Ricci harmonic heat  flow is solved starting with rough initial data obtained from a sufficiently small perturbation of the Euclidean metric on $\R^n$, and \cite{SS2} for a similar result for the hyperbolic metrics. In \cite{CW}, Cabezas-Rivas and Wilking obtained a short time existence result of the Ricci flow  starting from any complete Riemannian metric with nonnegative complex sectional curvature. They do not assume the curvature is bounded and do not assume the initial metric is a small perturbation   of a complete metric with bounded curvature. The solutions from \cite{Si}, \cite{KL},\cite{SS1} and \cite{SS2}
are complete and have bounded curvature when $t>0$.  In \cite{CW}, complete solutions are constructed where the curvature is bounded whenever $t>0$ and examples are also given of complete solutions where the curvature is unbounded when $t>0$.

For \KR flow, when $n=1$, Geisen-Topping \cite{GT} proved that \eqref{krf} always has a solution starting from any smooth \K metric $g_0$ which may have unbounded curvature, and may even be incomplete.  In fact, they also constructed solutions where $g(t)$ is complete with unbounded curvature for all $t\in [0, T)$.
Using the construction of \cite{CW}, Yang-Zheng proved that if $g_0$ is a $U(n)$ invariant complete \K metric with nonnegative sectional curvature, and with some technical assumptions on the solution $g(t)$ of \eqref{rf}, then $g(t)$ is \K for $t>0$. Hence in this case \KR flow \eqref{krf} has short time solution.

In this work we want to discuss the short time existence and long time existence of the \KR flow \eqref{krf} in higher dimensions without the assumption that $g_0$ has bounded curvature. We obtain the following:

\begin{thm}\label{t-1-intro} Let $g_0$ be a complete continuous Hermitian metric on a noncompact complex manifold $M^n$. Suppose there exists a sequence  $\{h_{k,0}\}$   of smooth complete \K metrics with bounded curvature on $M$
converging
uniformly on compact subsets to $g_0$ and another complete \K metric $\hg$ with bounded curvature on $M$ such that  $C^{-1} \hat{g} \leq h_{k,0} \leq C \hat{g}$ for some $C$ independent
of $k$.  Then for some $T>0$, the \KR flow \eqref{krf} has a complete smooth solution $g(t)$ on $M\times (0, T)$ which has bounded curvature for all $t>0$, and extends continuously to $M\times[0,T)$ with $g(0)=g_0$.
Moreover, if $g_0$ is smooth and $\{h_{k,0}\}$ converges smoothly and uniformly on
compact subsets of $M$, then $g(t)$ extends to a smooth solution to \eqref{krf} on $M\times
[0, T)$ with $g(0)=g_0$.
\end{thm}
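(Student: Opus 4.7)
The plan is to apply Shi's theorem (Theorem \ref{shishortime}) to each approximating metric $h_{k,0}$ to obtain smooth K\"ahler--Ricci flows $g_k(t)$, and then extract a smooth limit as $k\to\infty$. The success of this scheme hinges on establishing estimates on $g_k(t)$ that are uniform in $k$, depending only on the constant $C$ and on bounds for the reference metric $\hat g$.

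For each $k$, since $h_{k,0}$ is a smooth complete K\"ahler metric with bounded curvature, Theorem \ref{shishortime} produces a smooth solution $g_k(t)$ to \eqref{krf} on $M\times[0,T_k)$ with $g_k(0)=h_{k,0}$ and bounded curvature in space-time; a priori the existence times $T_k$ could collapse as $k\to\infty$. The main step is to prove that there exist $T>0$ and $C'>1$, depending only on $C$ and on $\hat g$, such that each $g_k(t)$ is defined on $M\times[0,T]$ and
\begin{equation*}
(C')^{-1}\hat g \le g_k(t)\le C'\hat g.
\end{equation*}
I would recast \eqref{krf} as a parabolic complex Monge--Amp\`ere equation for a potential $\varphi_k$ relative to $\hat g$ and then apply a maximum-principle argument to the quantities $\tr_{\hat g}g_k$ and $\tr_{g_k}\hat g$, or equivalently to $\varphi_k$ and $\dot\varphi_k$. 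In the noncompact setting, this must be carried out with cut-off barriers built from an exhaustion function having bounded $\hat g$-gradient and $\hat g$-Hessian; such exhaustions exist precisely because $\hat g$ is complete with bounded curvature.

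Once the uniform two-sided equivalence is in hand, standard parabolic regularity for the complex Monge--Amp\`ere operator (Evans--Krylov followed by Schauder bootstrapping, together with Yau/Calabi-type higher-order estimates) yields uniform local bounds
\begin{equation*}
\sup_K|\nabla^m\Rm(g_k(t))|_{\hat g}\le C_{m,K,\tau}\quad\text{for }t\in[\tau,T],
\end{equation*}
depending on $\tau>0$ and compact $K\subset M$ but independent of $k$. Arzel\`a--Ascoli then extracts a subsequence converging in $C^\infty_{\text{loc}}$ on $M\times(0,T)$ to a smooth K\"ahler solution $g(t)$ of \eqref{krf}, which is complete and has bounded curvature on $[\tau,T]$ for each $\tau>0$ by the metric equivalence and the curvature estimates.

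For the continuous extension at $t=0$, I would combine uniform $C^0$-continuity in $t$ of the potentials $\varphi_k$, which follows from a uniform bound on $\dot\varphi_k$ obtained from the Monge--Amp\`ere equation, with the assumed uniform convergence $h_{k,0}\to g_0$ on compact subsets, obtaining $g(t)\to g_0$ uniformly on compacts as $t\to 0^+$. If $h_{k,0}\to g_0$ in $C^\infty_{\text{loc}}$, then the derivatives of $h_{k,0}$ of all orders are uniformly bounded in $k$ on each compact, and local parabolic regularity up to $t=0$ upgrades the convergence of $g_k(t)$ to $C^\infty_{\text{loc}}$ on $M\times[0,T)$, giving the smooth extension. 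The chief obstacle is the uniform two-sided estimate of the second paragraph, since it must be established with constants depending only on $C$ and $\hat g$; the noncompact setting makes the maximum-principle argument delicate, and the bounded-curvature reference metric $\hat g$ plays an essential role at precisely this point.
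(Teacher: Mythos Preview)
Your overall architecture matches the paper's proof of the more detailed Theorem \ref{thmgequivalenttoboundedcurvature}: run Shi's flow from each $h_{k,0}$, obtain a $k$-uniform two-sided equivalence with $\hat g$ by the maximum principle applied to $\tr_{g_k}\hat g$ and $\tr_{\hat g}g_k$ (this is Lemma \ref{lemgequivalenttoboundedcurvature} in the paper; no potential is needed, and on the noncompact manifold the maximum principle is applied directly to the bounded trace functions rather than through cut-off barriers), then feed the equivalence into local Evans--Krylov (Theorem \ref{t-EvanKrylov}) to get both a uniform lower bound on the $T_k$ and the higher-order estimates that drive the compactness. So far so good.

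The gap is in your $t\to 0$ step. A uniform bound on $\dot\varphi_k$ controls only the ratio of volume forms, not the individual eigenvalues of $g_k(t)$ relative to $\hat g$; $C^0$-continuity of $\varphi_k$ in $t$ says nothing about $C^0$-continuity of $g_k(t)=\hat g+\ii\partial\bar\partial\varphi_k$, which involves two spatial derivatives. (And a global potential relative to $\hat g$ is not even assumed to exist.) Nor can you fall back on $|\partial_t g_k|=|\Ric(g_k)|$: although each $h_{k,0}$ has bounded curvature, there is no bound uniform in $k$, so $|\Ric(g_k(t))|$ may blow up near $t=0$ as $k\to\infty$. The paper closes this with a separate local estimate, Lemma \ref{lemgequivalenttoboundedcurvaturelocal}: given the already-established a priori equivalence $N^{-1}\hat g\le h_k(t)\le N\hat g$ on a coordinate ball and closeness $\hat g\le h_k(0)\le C\hat g$, one gets $(1-a(t))\,C^{-1}h_k(0)\le h_k(t)\le(1+a(t))h_k(0)$ with $a(0)=n\sqrt{C(C-1)}$, proved by the same trace quantities multiplied by a cut-off. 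This is then applied with the role of $\hat g$ played by a fixed late member $h_{k_0,0}$ of the sequence: since $h_{k,0}$ is $\delta$-close to $h_{k_0,0}$ for $k\ge k_0$, the lemma forces $h_k(t)$, hence the limit $g(t)$, to stay close to $h_{k_0,0}$ (and thus to $g_0$) for small $t$, uniformly in $k$; letting $\delta\to 0$ gives $g(t)\to g_0$.
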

One can also estimate the existence time $T$ and bounds of the norms of the curvature tensor with its covariant derivatives of $g(t)$, see Theorem \ref{thmgequivalenttoboundedcurvature} for more details.

As a corollary, we obtain an estimate of $T$ in Theorem \ref{shishortime} in terms of the upper bound of the holomorphic bisectional curvature. In fact, we can prove a more general result (see Corollary \ref{existencetimeestimate-2}):

\vspace{10pt}
\begin{it} Let $(M^n,g_0)$ be a complete noncompact \K manifold with bounded curvature.  Suppose that $\hg\leq g_0\le C\hg$ for some complete \K metric  $\hg$ with bounded curvature  and holomorphic bisectional curvatures bounded above by $K$.   Let $T=1/(2nK)$ if $K>0$, otherwise let $T=\infty$.  Then the \KR flow \eqref{krf} has a complete smooth solution $g(t)$ on $M\times[0,T)$ with $g(0)=g_0$.  \end{it}\vspace{10pt}

Another corollary is that one can prove that the \KR flow \eqref{krf} has short time solution if  $g_0$ is perturbation of a complete \K metric $\hg$ with bounded curvature by a potential satisfying certain growth conditions.  More precisely (see Corollary \ref{perturbpotential}),

\vspace{10pt}
\begin{it} Let $(M^n,g_0)$ be a complete noncompact \K manifold with bounded curvature.  Suppose $u$ is a $C^2$ function such that $|\nabla u|_{g_0}$ and $|u|$ are of sublinear growth and such that $g_0+\ii\p\bar\p u$ is uniformly equivalent to $g_0$.
Then for some $T>0$, the \KR flow \eqref{krf} has a complete smooth solution $g(t)$ on $M\times (0, T)$ which has bounded curvature for all $t>0$ and extends continuously to $M\times[0,T)$ with $g(0)=g_0+\sqrt{-1}\partial\bar{\p} u$.  \end{it}
\vspace{10pt}

 If the condition in Theorem \ref{t-1-intro} that $ C^{-1}\hg\le h_{k,0}\le  C\hg$ is relaxed to only assuming   $h_{k,0}\ge C^{-1}\hg$, we still can obtain a short time solution  under some  additional assumptions on $h_{k,0}$. See Theorem  \ref{thmgboundedbelowbyboundedcurvature}, for more details. However, in this case, we do not know if the curvature of the solution  is bounded for $t>0$.

  Applying the general existence theorems to $U(n)$ invariant \K metrics on $\C^n$, we obtain:
\begin{thm}\label{t-2-intro}
Let $g_0$ be a complete smooth $U(n)$-invariant \K metric on $\C^n$ with either non-negative or non-positive holomorphic bisectional curvature.  Then for some $T>0$, the \KR flow \eqref{krf} has a complete smooth  $U(n)$-invariant solution $g(t)$ on $\C^n\times [0, T)$ with $g(0)=g_0$.  Moreover,  the solution  exists for all time in the case of non-positive holomorphic bisectional curvature.
\end{thm}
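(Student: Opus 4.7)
My plan is to deduce Theorem \ref{t-2-intro} from Theorem \ref{t-1-intro} together with the existence-time corollary stated in the introduction, by constructing an appropriate $U(n)$-invariant approximating sequence. Any smooth complete $U(n)$-invariant \K metric on $\C^n$ admits a smooth radial \K potential: $g_0 = \ii\p\bar\p P_0(|z|^2)$ for some smooth function $P_0$ on $[0,\infty)$. Writing $s = |z|^2$, completeness and the holomorphic bisectional curvatures of $g_0$ reduce to explicit pointwise conditions on the derivatives $P_0'$, $P_0''$, $P_0'''$ as functions of $s$; in particular the sign of the bisectional curvature corresponds to one-variable ODE inequalities.

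\textbf{Step 1: Approximating sequence.} I would build $h_{k,0} = \ii\p\bar\p P_{k,0}(|z|^2)$ where $P_{k,0}$ agrees with $P_0$ on $\{s \le R_k\}$ for some $R_k \to \infty$ and is extended outwards by a profile whose first several derivatives are uniformly bounded (asymptotically linear or quadratic in $s$, depending on the sign regime). Done with care, this produces a sequence of complete smooth $U(n)$-invariant \K metrics on $\C^n$ with bounded curvature, converging smoothly on compact subsets to $g_0$, and the ODE description of the bisectional curvature lets us arrange that the extension preserves the sign of the curvature assumed on $g_0$.

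\textbf{Step 2: Reference metric and short time existence.} In the nonnegative case I would take $\hg$ to be a fixed complete $U(n)$-invariant \K metric of bounded curvature with the same asymptotic behaviour as the $h_{k,0}$ (say $\hg=h_{1,0}$), and arrange the construction so that $C^{-1}\hg \le h_{k,0} \le C\hg$ uniformly in $k$; this is feasible because the modifications are localized outside progressively larger balls and the radial profiles are monotonically controlled. In the nonpositive case I would additionally require $\hg$ itself to have nonpositive holomorphic bisectional curvature, for instance by gluing $P_0$ near the origin to an asymptotically Euclidean radial profile. Applying Theorem \ref{t-1-intro} then yields a complete smooth \KRF solution $g(t)$ on $\C^n \times [0,T)$ with $g(0) = g_0$, proving short-time existence; $U(n)$-invariance of $g(t)$ follows from uniqueness for the \KRF with bounded curvature together with the $U(n)$-invariance of the approximations.

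\textbf{Step 3: Long-time existence in the nonpositive case, and the main obstacle.} Having arranged $\hg$ to have bisectional curvature bounded above by some $K \le 0$, I would invoke the existence-time corollary stated in the introduction applied to each $h_{k,0}$ (which has bounded curvature), yielding solutions on $[0,\infty)$; the quantitative bounds underlying Theorem \ref{t-1-intro} (Theorem \ref{thmgequivalenttoboundedcurvature}) then deliver a limiting solution on $\C^n \times [0,\infty)$. The hard part is Step 1: producing approximations that are simultaneously complete, of bounded curvature, uniformly two-sided comparable to $\hg$, \emph{and} preserve the sign of the holomorphic bisectional curvature. One must choose the gluing profile so that the curvature polynomials in $P_0', P_0'', P_0'''$ do not change sign at the interface or at infinity. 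The $U(n)$-invariance is crucial since it collapses these verifications to ODE inequalities, but the nonnegative and nonpositive regimes require different asymptotic profiles and separate sign arguments.
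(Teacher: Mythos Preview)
Your overall architecture—approximate $g_0$ by bounded-curvature $U(n)$-invariant metrics and invoke the general existence theorems—matches the paper's. However, there are two substantive issues.

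First, preserving the sign of the bisectional curvature in the approximations $h_{k,0}$ is unnecessary, and you rightly flag it as the hard part. The paper never attempts this. Instead it works in the Wu--Zheng formalism via the generating function $\xi$ (with $\xi(0)=0$): nonnegative (resp.\ nonpositive) bisectional curvature forces $\xi'\ge 0$ and $0\le\xi\le 1$ (resp.\ $\xi'\le 0$ and hence $\xi\le 0$), and these \emph{pointwise inequalities on $\xi$} are all that is extracted from the curvature hypothesis. The approximations are then simply $\xi_k=\eta_k\xi+(1-\eta_k)\hat\xi$ for a cutoff $\eta_k$ and a fixed target $\hat\xi$, with no curvature-sign control on $h_{k,0}$.

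Second, and more seriously, your claim that one can arrange $C^{-1}\hat g\le h_{k,0}\le C\hat g$ uniformly in $k$ with $\hat g=h_{1,0}$ is not justified. Such a uniform two-sided bound forces $g_0$ itself to be uniformly equivalent to $\hat g$; the localization of the modifications outside large balls is irrelevant to this. In the nonnegative case the paper \emph{does} prove that such a bounded-curvature $\hat g$ exists (Proposition~\ref{propxiequivalenttoboundedcurvature}), but the construction is nontrivial: $\hat\xi$ must in general oscillate between $\a$ and $1$ in a carefully calibrated way, and is certainly not obtained by a single cutoff of $\xi$. In the nonpositive case, where $\xi\le 0$ may be unbounded below, the paper does \emph{not} produce a two-sided comparison with a metric having bisectional curvature bounded above by $K\le 0$; instead it takes $\hat\xi\equiv 0$ (Euclidean $\hat g$), obtains only the one-sided bound $h_{k,0}\ge e^{-c}\hat g$, and applies Theorem~\ref{thmgboundedbelowbyboundedcurvature} rather than Theorem~\ref{thmgequivalenttoboundedcurvature} (via Theorem~\ref{mainthm-a} and Corollary~\ref{maincor-a}). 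Long-time existence then comes from $K=0$ in that one-sided framework. Your Step~3 route through Theorem~\ref{thmgequivalenttoboundedcurvature} with $K\le 0$ requires a two-sided bound you have not established and which the paper avoids entirely.
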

This gives an affirmative answer to a question posed by Yang-Zheng \cite{YZ}. In fact, one can prove results more general than the Theorem above. See Theorems \ref{mainthm-a}, \ref{mainthm} and their corollaries for more details. We also obtain some long time existence results for $g_0$ with nonnegative holomorphic bisectional curvature, see Theorem \ref{longtimenonnegativecurvature}.

The organization of the paper is as follows. In section \S2 we review some basic theory and estimates for \eqref{krf}, and in \S3 we prove some further a  priori estimates which we will need later.  \S4 contains our main existence theorems Theorems \ref{thmgequivalenttoboundedcurvature} and \ref{thmgboundedbelowbyboundedcurvature} and accompanying corollaries.  In \S5 we review Wu-Zheng's description in \cite{WZ} of $U(n)$ invariant \K metrics on $\C^n$ and use this to apply our previous results to prove Theorems \ref{mainthm-a} and \ref{mainthm}.

The first author would like to thank Bo Yang, and the third author would like to thank Fangyang Zheng for helpful discussions and interest in this work.

\section{Preliminaries}
 In this section, we review some well known results for the \K Ricci flow which we will use in the paper.  We first recall Shi's short time existence theorem for \eqref{krf} in \cite{Shi2}.

 \begin{thm}\label{shishortime} Let $(M^n,g_0)$ be a complete noncompact \K manifold with curvature bounded by a constant $K$.  Then
for some $0<T\leq \infty$ depending only on $K$ and the dimension $n$, there exists 
a smooth solution $g(t)$ to \eqref{krf} on $M \times [0, T)$ with $g(0)=g_0$ such that
 \begin{enumerate}
 \item[(i)] $g(t)$ is \K and equivalent to $g_0$ for all $t\in [0, T)$;
 \item[(ii)] $g(t)$ has uniformly bounded curvature on $M\times[0,T']$ for all
$0<T'<T$.  In particular, for any $l\geq 0$ there exists a constant $C_l$
depending only on $l$, $g_0$ and the dimension $n$ such that
     $$
 sup_M |\nabla^l \Rm(h(t))|^2_{h(t)}\le \frac {C_l}{t^{l}},
 $$
 on $M\times[0,T']$.
     \item[(iv)] If $T<\infty$ and $\displaystyle \lim_{t\to T} \sup_{M} |Rm(x,
t)|<\infty$, then $g(t)$ extends to a smooth solution to \eqref{krf} on $M \times [0, T_1)$ for some $T_1>T$ so that
(ii) is still true with $T$ replaced by $T_1$.
         \end{enumerate}
\end{thm}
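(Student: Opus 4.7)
The plan is to follow Shi's original argument: reformulate \eqref{rf} via DeTurck's trick as a strictly parabolic quasilinear system, solve it on a compact exhaustion with Dirichlet boundary data, derive uniform a priori curvature estimates on a time interval depending only on $(n,K)$, and pass to the limit. Concretely, I would fix a smoothing of $g_0$ as a background metric $\hat g$ and replace \eqref{rf} by the Ricci-DeTurck flow $\p_t g = -2\Ric(g) + \mathcal{L}_W g$, where $W=W(g,\hat g)$ is a first-order expression in $g$ relative to $\hat g$; the Ricci flow is then recovered by pulling back along the diffeomorphisms generated by $W$. Exhausting $M$ by relatively compact domains $\Omega_k$ and imposing $g=g_0$ on the parabolic boundary produces smooth short-time solutions $g_k(t)$ on $\Omega_k\times[0,T_k)$ by standard quasilinear parabolic theory.

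The heart of the argument is the uniform a priori estimates. The maximum principle applied to the evolution inequality $(\p_t-\Delta)|\Rm|^2\le C(n)|\Rm|^3$, together with Shi-type spatial cut-offs to control contributions from $\p\Omega_k$, yields $|\Rm(g_k(t))|\le 2K$ on $\Omega_k\times[0,T]$ for some $T=T(n,K)>0$ independent of $k$. Shi's Bernstein-type argument then gives the higher-order interior estimates $|\n^l \Rm|^2\le C_l(g_0,n)/t^l$ on $[0,T']$, also independent of $k$. Combined with the metric equivalence $e^{-Ct}g_0\le g_k(t)\le e^{Ct}g_0$ obtained by integrating $|\p_t g|\le|\Ric|\le C|g|$, an Arzel\`a-Ascoli and diagonal extraction produce a $C^\infty_{loc}$ subsequential limit $g(t)$ on $M\times[0,T)$ satisfying (i) and (ii). For the extension statement (iv), one picks $t_0<T$ close to $T$ so that $g(t_0)$ has uniformly bounded curvature and is equivalent to $g_0$; applying short-time existence starting from $g(t_0)$ and gluing by the uniqueness result of Chen-Zhu yields a smooth extension to $M\times[0,T_1)$ for some $T_1>T$.

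For the K\"ahler case \eqref{krf}, I would observe that when $g_0$ is K\"ahler with fixed complex structure $J$, the DeTurck vector field $W$ is real holomorphic, so the Ricci-DeTurck flow preserves the condition $\n J=0$; pulling back by the induced biholomorphisms keeps the flow K\"ahler, so the limit $g(t)$ solves \eqref{krf}. The main technical obstacle is obtaining the curvature bound on $\Omega_k$ with a time of existence independent of $k$: the spatial cut-off in the maximum principle must absorb the boundary contribution without introducing any dependence on the size of $\Omega_k$, which forces one to argue locally in space-time with carefully chosen cut-offs rather than via a naive global spatial maximum.
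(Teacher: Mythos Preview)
The paper does not give its own proof of this theorem: it is simply recalled from Shi's work \cite{Shi1,Shi2} as background, so there is nothing in the paper to compare your proposal against line by line.

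That said, your outline of the Riemannian part is broadly the strategy of \cite{Shi1}: DeTurck gauge, Dirichlet problems on an exhaustion, the doubling-time curvature bound from $(\p_t-\Delta)|\Rm|^2\le C(n)|\Rm|^3$ with localized cut-offs, Shi's Bernstein estimates for the higher derivatives, and a diagonal limit. The extension statement (iv) via restarting from $g(t_0)$ and invoking Chen--Zhu uniqueness is also standard.

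The weak point is your K\"ahler preservation step. The claim that the DeTurck vector field $W=W(g,\hat g)$ is automatically real holomorphic when $g_0$ is K\"ahler is not justified and is not how Shi argues in \cite{Shi2}. A priori $W$ depends on the evolving metric $g(t)$, and you have not yet shown $g(t)$ is K\"ahler, so you cannot assume the nice (1,0)/(0,1) splitting of $\Gamma(g)-\Gamma(\hat g)$ that would make $W$ holomorphic; this is circular. Shi's route is instead to take the real Ricci flow solution produced in \cite{Shi1} and prove directly that the K\"ahler condition is preserved, either by a maximum-principle argument on the torsion $\nabla J$ (using the bounded-curvature hypothesis to justify the noncompact maximum principle), or equivalently by solving the parabolic complex Monge--Amp\`ere equation for a potential and appealing to uniqueness of bounded-curvature Ricci flow to identify the two solutions. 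If you want to keep your DeTurck-on-exhaustion framework, the cleanest fix is the latter: on each $\Omega_k$ solve the K\"ahler--Ricci flow via a potential, obtain the same curvature and derivative bounds, and pass to the limit; then the K\"ahler condition is built in from the start.
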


 The solution $g(t)$ in Theorem \ref{shishortime} has uniformly  bounded curvature on $M\times[0,T')$ for any $0<T'<T$. From this it is easy to see that $g(0)$ and $g(t)$ are uniformly equivalent on $M\times[0,T')$. On the other hand, it is  a well known fact one can study the \KR flow   \eqref{krf} through the parabolic Monge-\A equation. This was originated in \cite{C}, and we refer to \cite{CT} and references therein for further details on this fact.  By  the Evans-Krylov theory \cite{Kr,Ev} for fully non-linear equations, which in the case of \eqref{krf}  takes the form in Theorem \ref{t-EvanKrylov} below, one may conclude that if $g(0)$ and $g(t)$ are uniformly equivalent, then subsequent curvature bounds follow (see \cite{SW}, and also \cite{Yu} in the complete case, for proofs using only the maximum principle). Actually, we need a more general version in the sense that we need local estimates and $g(t)$ is assumed to be uniformly equivalent to a fixed background metric $\hat g$.

 Let us first fix some notations and terminology. $(M^n,\hat g)$ is said to have   bounded geometry of infinite order if the curvature tensor and all its covariant derivatives are uniformly bounded. In particular, the solution $g(t)$ in Theorem \ref{shishortime} has bounded geometry of infinite order for $t>0$.

Also, we will denote the geodesic ball with respect to the metric $g$ with center at $p$ and radius $r$ by $B_g(p,r)$. The following theorem can be found in \cite{SW}.

 \begin{thm}\label{t-EvanKrylov}  Let $(M^n,\hat g)$ be a complete noncompact
\K manifold with bounded geometry of infinite order. Let $h(t)$ be a solution
of \KR \eqref{krf} on $M\times[0,T)$ with initial condition $h_0$ which is a
complete \K metric.  For any $x\in M$, suppose there is a constant $N>0$, such
that
 $$
 N^{-1}\hat g\le h(t)\le N\hat g
 $$
 on $ B_{\hat g}(x,1)\times [0,T)$.  Then

  \begin{itemize}
    \item [(i)]
 $$
 | \hat{\nabla}^k h|^2_{\hat g}\leq \frac{C_k}{t^{k}}
 $$
 on $  B_{\hat g}(x,1/2)\times(0, T)$, for some constant $C_k$ depending only on $k$,
$\hat g$, $n$, $T$ and $N$.

 \item[(ii)] If we assume $|\hat{\nabla}^k h_0|_{\hat g}^2$ is bounded in $
B_{\hat g}(x,1)$ by $c_k$, for $k\ge 1$,  then
  $$
 |\hat\nabla ^k h|^2_{\hat g}\le {C_k} ,
 $$
  on $  B_{\hat g}(x,1/2)\times[0, T)$ for some constant $C_k$ depending only on $k$,
$c_k$, $n$, $T$ and $N$.
  \end{itemize}
\end{thm}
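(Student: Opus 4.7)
The plan is to localize the \KRF as a parabolic complex Monge--Amp\`ere equation for a scalar potential, and then invoke the parabolic complex Evans--Krylov estimate followed by standard parabolic Schauder bootstrapping.

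First, since $\hat g$ has bounded geometry of infinite order, cover $\overline{B_{\hat g}(x,3/4)}$ by finitely many small holomorphic coordinate balls $U$ and work on each one separately. On such a $U$ the local $\ddbar$-lemma produces a smooth real function $\phi_0$ with $h_0 = \hat g + \ii\p\bar\p \phi_0$. Let $\phi:U\times[0,T)\to\R$ solve the scalar equation
\begin{equation*}
\frac{\p\phi}{\p t} = \log\det(\hat g_{i\bar\jmath} + \p_i\p_{\bar\jmath}\phi), \qquad \phi(\,\cdot\,,0)=\phi_0.
\end{equation*}
Applying $\ii\p\bar\p$ to both sides shows that $\hat g+\ii\p\bar\p\phi$ satisfies the same \KRF with the same initial data as $h$; since $h$ is uniformly equivalent to $\hat g$ on $U$ and so has bounded geometry there, uniqueness for the \KRF gives $h(t) = \hat g + \ii\p\bar\p\phi(t)$ throughout $U\times[0,T)$. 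The hypothesis $N^{-1}\hat g \le h \le N\hat g$ therefore becomes a two-sided bound on the complex Hessian of $\phi$.

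Second, $F(A)=\log\det A$ is concave on positive Hermitian matrices, so the equation for $\phi$ is uniformly parabolic, concave, and fully nonlinear. The parabolic complex Evans--Krylov estimate, applied on slightly smaller parabolic cylinders inside $U\times(0,T)$, then yields, for every $0<\tau<T'<T$, an interior H\"older estimate
\begin{equation*}
[\ii\p\bar\p\phi]_{C^{\a,\a/2}(B_{\hat g}(x,3/4)\times[\tau,T'])} \le C,
\end{equation*}
with $C$ depending only on $n$, $N$, $\hat g$, $\tau$, $T'$. Equivalently, $h$ is H\"older continuous with respect to $\hat g$.

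Third, with $h$ in $C^{\a,\a/2}$ the coefficients of the linearization $\p_t - h^{i\bar\jmath}\p_i\p_{\bar\jmath}$ are H\"older, and standard parabolic Schauder estimates applied to the equations satisfied by $\hat\n^m\phi$ yield, by induction on $m$, interior $C^{k,\a}$ bounds on $\phi$, hence on $|\hat\n^k h|_{\hat g}$. Under the initial regularity hypothesis of (ii), $\phi_0$ inherits matching $C^{k,\a}$ bounds on $B_{\hat g}(x,1)$, and the Schauder estimates then run up to $t=0$, producing the uniform-in-time bounds claimed. Without initial regularity, the same estimates applied on a parabolic cylinder of radius $r=\min(\sqrt{t_0},\,1/2)$ centred at $(\,\cdot\,,t_0)$ give the $t^{-k}$ weighting of (i). The main obstacle is the parabolic complex Evans--Krylov step: it is this estimate that upgrades the $L^\infty$ two-sided bound on $\p\bar\p\phi$ to $C^{\a}$ regularity of $\p\bar\p\phi$ and so opens the bootstrap. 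Once that estimate is available with constants depending only on the stated data, the remainder is routine parabolic regularity theory.
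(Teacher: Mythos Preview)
Your approach is essentially the paper's: localize using the bounded geometry of $\hat g$ and then apply parabolic Evans--Krylov theory. The paper's execution is briefer in two respects. First, it invokes the Tian--Yau result \cite{TY} to produce, for each $x$, a local biholomorphism $\phi_x:D\to M$ from the unit ball in $\C^n$ with \emph{uniform} control on $\phi_x^*\hat g$ and all its derivatives; this is what makes the constants depend only on $\hat g$, $n$, $T$, $N$ and not on the particular chart at $x$. Second, it applies the Evans--Krylov estimate directly to the pulled-back metric $\phi_x^*h(t)$ on $D\times[0,T)$, citing \cite{SW} for a maximum-principle proof tailored to the \KRF that works at the level of the metric and so avoids your scalar-potential reduction altogether.

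Your reduction is workable, but the step ``let $\phi$ solve the scalar equation on $U\times[0,T)$ and then identify $\hat g+\ii\p\bar\p\phi$ with $h$ by uniqueness'' is slightly circular as written: existence of the scalar solution on the full interval $[0,T)$ already requires the two-sided $C^2$ bound on $\phi$, which only becomes available \emph{after} the identification with $h$. The cleanest fix is simply to \emph{define} $\phi(t)=\phi_0+\int_0^t\log\bigl(\det h(s)/\det\hat g\bigr)\,ds$ directly from the given $h$; then $\hat g+\ii\p\bar\p\phi$ and $h$ both solve \eqref{krf} on $U$ with the same initial data, hence coincide, and the rest of your argument goes through unchanged.
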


 \begin{proof} Since $\hat g$ has bounded geometry of infinite order, by \cite{TY}, for any
$x\in M$ there exists a local biholomorphism $\phi_x: D  \to M$, where $D=D(1)$
is the open unit ball in $\C^n$, satisfying the following in $D$
 \begin{enumerate}
 \item [(a)] $\phi_x(0)=x$, $\phi_x(D)\subset \hat B(x,1)$, $\phi_x(D)\supset
\hat B(x,2\delta)$ for some $\delta>0$ which is independent of $x$.
\item [(b)] $C^{-1}\delta_{i\jbar} \leq (\phi_x^*(\hat{g}))_{i\jbar} \leq C
\delta_{i\jbar}$ for some $C$ independent of $x$.
\item [(c)] $\lf|\dfrac{\partial^l (\phi_x^*(\hat{g}))_{i\jbar}}{\partial
z^L}\ri| \leq C_{l}$  for any $l, i, j$ and multi index $L$ of length $l$ for
some constant $C_l$ which is independent of $x$. \end{enumerate}

 Consider $\phi_x^*(h(t))$, which clearly will solve \ref{krf} on $D(1)
\times[0, T)$. By the Evans-Krylov theory \cite{Ev}, \cite{Kr} for fully
non-linear elliptic and parabolic equations (see also \cite{SW} for a maximum
principle proof in the case of \K Ricci flow), the result follows.
 \end{proof}

 We end this section with the following longtime existence Theorem from \cite{CT} when we look at certain special solutions to \eqref{krf} on $\C^n$ in Theorem \ref{longtimenonnegativecurvature}.

 \begin{thm}\label{longtimeexistence}
Let $(M, g_0)$ be a complete non-compact \K manifold such that
\begin{enumerate}
\item [i)] $|\Rm(x)| \to 0$ as $d(x)\to \infty$ where $d(x)$ is the distance function on $M$ from some $p\in M$.
\item [ii)] The injectivity radius of $(M, g_0)$ is uniformly bounded below by some constant $c>0$.
\item[iii)] There exists a strictly pluri-subharmonic function $F$ on $M$.
 \end{enumerate}
 Then the \KR flow \eqref{krf} has a complete smooth solution $g(t)$ on $M\times[0, \infty)$ with $g(0)=g_0$.  Moreover, the curvature of $g(t)$ is bounded uniformly on $M\times[0,T]$ for all $T<\infty$.
 \end{thm}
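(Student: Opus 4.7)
The plan is to use Shi's short-time existence (Theorem \ref{shishortime}) as a starting point and extend the solution to $[0,\infty)$ by establishing a priori estimates strong enough to invoke the continuation criterion (iv) of that theorem. By hypotheses (i) and (ii), $(M,g_0)$ has bounded curvature, so Shi produces a maximal smooth solution $g(t)$ on $[0,T_{\max})$ with bounded curvature on each $[0,T']$, $T'<T_{\max}$. It therefore suffices to show that on every finite subinterval $[0,T]\subset[0,T_{\max})$ the curvature of $g(t)$ remains bounded by a constant depending only on $T$ and the data; this will force $T_{\max}=\infty$ and simultaneously yield the moreover clause.

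I would reduce the curvature bound to a uniform equivalence $C^{-1}g_0\le g(t)\le Cg_0$ on $M\times[0,T]$. Given such an equivalence, Theorem \ref{t-EvanKrylov} applied locally with $\hat g=g_0$ (whose bounded geometry of infinite order can be obtained by applying Shi slightly after $t=0$, or verified from (i)--(ii) via a covering by quasi-coordinates as in \cite{TY}) produces uniform bounds on $\hat\nabla^k g(t)$ of any order depending only on $T$ and the equivalence constant, and in particular uniform bounds on $|\Rm(g(t))|$.

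The uniform equivalence is produced through the parabolic complex Monge--Amp\`ere formulation of \eqref{krf}. Writing $\omega(t)=\omega_0+\sqrt{-1}\partial\bar\partial\phi(t)$ with $\phi(0)=0$, the \KR flow becomes
$$\frac{\partial \phi}{\partial t}=\log\frac{\det(g_{0,\ijb}+\phi_{\ijb})}{\det g_{0,\ijb}}-f_0,$$
where $f_0$ is a (local) Ricci potential of $g_0$; hypothesis (i) lets us choose $f_0$ with controlled behavior at infinity. The estimates proceed in the standard Yau--Aubin order: a $C^0$ bound for $\phi$ and $\dot\phi$ from the maximum principle, followed by a $C^2$ bound derived from Yau's trick applied to $H=\log\tr_{g_0}g(t)-A\phi$ with $A$ sufficiently large. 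The crucial ingredient is hypothesis (iii): by Grauert, the existence of a strictly PSH function makes $M$ Stein and thus admits a smooth strictly PSH exhaustion $F$. Adding a penalty $\epsilon F$ to each test function forces the interior supremum to be attained on a compact set; the strictly positive contribution of $\sqrt{-1}\partial\bar\partial F$ produces an extra positive term in the relevant heat inequality, and letting $\epsilon\to 0$ after the bound is extracted yields the uniform estimate on $M\times [0,T]$.

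The main obstacle is the noncompactness of $M$: Yau's classical $C^0$ and $C^2$ arguments for Monge--Amp\`ere are maximum principle computations at global extrema, and on a noncompact manifold such extrema need not exist. The strictly PSH function of hypothesis (iii) is exactly the barrier at infinity that makes those arguments go through; hypothesis (i) controls the Ricci potential $f_0$ and provides the smallness at infinity needed so that the barrier term can be taken arbitrarily small; and hypothesis (ii), together with (i), places us in the framework of Shi's theorem and of Theorem \ref{t-EvanKrylov}, closing the loop between the a priori estimates and the continuation of the flow.
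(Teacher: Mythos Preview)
The paper does not prove this theorem; it is quoted verbatim from \cite{CT} as background in the preliminaries, so there is no in-paper proof to compare against. Your outline is, in fact, essentially the strategy of \cite{CT}: reduce long-time existence to a uniform equivalence $C^{-1}g_0\le g(t)\le Cg_0$ via the parabolic complex Monge--Amp\`ere equation, use the strictly PSH function as a barrier at infinity to make the Yau-type maximum principle arguments go through on a noncompact manifold, and then invoke Evans--Krylov/Shi to bound curvature and continue the flow.

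Two technical points deserve care if you flesh this out. First, writing $\omega(t)=\omega_0+\sqrt{-1}\partial\bar\partial\phi(t)$ globally and choosing a global Ricci potential $f_0$ both require that the relevant closed $(1,1)$-forms be $\partial\bar\partial$-exact on all of $M$; this is where the Stein property (via (iii) and Grauert) is used, not just as a barrier device. Second, in \cite{CT} the equation is written in a modified form precisely so that the right-hand side stays bounded despite $|\Rm|$ only decaying at infinity; your remark that ``hypothesis (i) lets us choose $f_0$ with controlled behavior at infinity'' glosses over this, and you should expect to need the exhaustion $F$ already at the $C^0$ stage, not only for the $C^2$ estimate. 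With those caveats, the approach is correct and matches the cited reference.
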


\section{Further estimates}

In this section we prove some further estimates which we will require to prove our main theorems. One main tool is  Theorem \ref{t-EvanKrylov}. Hence we want to obtain $C^0$ estimate of solution of \KR flow in terms of a background metric.

Recall that the holomorphic bisectional curvature of a \K manifold is said to be bounded above by $K$ if
\begin{equation}\label{holobi-def}
 \frac{R(X,\bar X,Y,\bar Y)}{\|X\|^2\|Y\|^2+|\la X,\bar Y\ra|^2}\le K
 \end{equation}
 for any two nonzero (1,0)-vectors $X,Y$. The holomorphic bisectional curvature of a \K manifold being bounded below by $K$ is defined similarly.

 \begin{lem}\label{lemgequivalenttoboundedcurvature}
Let $h(t)$ be a solution to \eqref{krf} on $M^n\times [0,
T_0)$ with $h(0)=h_0$ such that $h(t)$ has uniformly bounded curvature for on $M\times [0, T'] $ for all $0<T'<T_0$.  Let $\hat g$ be another  complete  \K metric on $M$ with bounded curvature such that  the  holomorphic bisectional curvature bounded above by $K$. Let $T=\frac1{2nK}$ if $K>0$, otherwise let $T=\infty$.
\begin{itemize}
  \item [(i)] Suppose $h_0\ge \hat g$. Then
  $
  h(t)\ge \lf(\frac1n-2Kt\ri)\hat g
  $
  on $M\times[0,\min\{T_0,T\})$.
  \item [(ii)] Suppose in addition to (i) we have $h_0\le C\hat g$, that is, suppose $\hat g\le h_0\le C\hat g$, then
  $$
  (1-w(t))\hat{g}\leq h(t)\le (1+w(t))\hat g$$
 on $M\times[0,\min\{T_0,T\})$,
\end{itemize}
where $w(t)=\sqrt{v_2(t)(v_1(t)+v_2(t)-2n)},$
 $$v_1(t)=\frac{1}{\frac1n-2Kt} \hspace{5pt}, v_2(t)=nCe^{-2\kappa v_1(t)t}$$
and $\kappa$ is a lower bound on the bisectional curvature of $\hat{g}$.  In particular, we have $w(0)=n\sqrt{C(C-1)}.$

\end{lem}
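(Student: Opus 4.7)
The plan is to derive each inequality by a maximum-principle argument applied to a suitable scalar quantity built from $h(t)$ and $\hat g$, together with an elementary linear-algebra step at the end for the two-sided bound.

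For part (i), I would work with $u:=\text{tr}_h\hat g = h^{\bar\jmath i}\hat g_{i\bar\jmath}$, since $h\geq \lambda\hat g$ is equivalent to every eigenvalue of $\hat g^{-1}h$ being at least $\lambda$, which is controlled by $u$ (any eigenvalue of $h^{-1}\hat g$ is bounded by the sum $u$). The hypothesis $h_0\geq \hat g$ gives $u(0)\leq n$. Combining $\p_t h^{\bar\jmath i} = h^{\bar\jmath k}h^{\bar l i}R^h_{k\bar l}$ with Yau's Schwarz-lemma computation for a holomorphic map (here the identity, with domain metric $h$ and target metric $\hat g$) produces a parabolic inequality of the form
\[
\lf(\p_t - \Delta_h\ri) \log u \leq 2K\, u,
\]
where the upper bound $K$ on the bisectional curvature of $\hat g$ enters via the estimate $h^{\bar\jmath i}h^{\bar l k}\hat R_{i\bar\jmath k\bar l}\leq 2K u^2$. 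Because $h(t)$ has uniformly bounded curvature on every slab $M\times[0,T']$ with $T'<T_0$ and $\hat g$ has bounded curvature, $u$ is bounded on each such slab, so one can apply a standard non-compact maximum principle (using a cutoff from the $\hat g$-distance, or the Omori--Yau/Ni--Tam framework). Comparison with the ODE $y'=2Ky^2$, $y(0)=n$, whose solution is $v_1(t)=(1/n-2Kt)^{-1}$, then gives $u(t)\leq v_1(t)$ on $M\times[0,\min\{T_0,T\})$, and hence $h(t)\geq v_1(t)^{-1}\hat g = (1/n-2Kt)\hat g$.

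For part (ii), assume in addition $h_0\leq C\hat g$, so that $v:=\text{tr}_{\hat g} h=\hat g^{\bar\jmath i}h_{i\bar\jmath}$ satisfies $v(0)\leq nC$. The flow gives $\p_t v = -\hat g^{\bar\jmath i}R^h_{i\bar\jmath}$, and the Schwarz-type computation with the roles of $h$ and $\hat g$ reversed --- now using the lower bound $\kappa$ on the bisectional curvature of $\hat g$ --- yields
\[
\lf(\p_t - \Delta_h\ri) \log v \leq -2\kappa\,\text{tr}_h\hat g.
\]
Substituting the bound $\text{tr}_h\hat g \leq v_1(t)$ from part (i) and invoking the same non-compact maximum principle gives $v(t)\leq v_2(t)=nCe^{-2\kappa v_1(t)\,t}$. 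Finally, to pass from the two trace bounds $u\leq v_1$ and $v\leq v_2$ to the pointwise sandwich in (ii), let $\mu_1,\dots,\mu_n$ be the eigenvalues of $\hat g^{-1}h$ at a given point, so $\sum\mu_i=v$ and $\sum 1/\mu_i=u$. Summing the identity $(\mu-1)^2/\mu = \mu+1/\mu - 2$ gives
\[
\sum_{i=1}^{n}\frac{(\mu_i-1)^2}{\mu_i} = v+u-2n \leq v_1+v_2-2n,
\]
hence each $(\mu_i-1)^2\leq \mu_i(v_1+v_2-2n)\leq v_2(v_1+v_2-2n)=w(t)^2$, i.e.\ $|\mu_i-1|\leq w(t)$, which is exactly $(1-w)\hat g\leq h\leq (1+w)\hat g$.

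The main obstacle is the Schwarz-lemma-type PDE computation itself: when one takes two $h$-derivatives of $u$ or $v$, the curvature of $\hat g$ enters through its Christoffel symbols, and these curvature terms must be combined with the time derivative from \eqref{krf} and estimated against the bisectional curvature bound to produce precisely the right-hand sides $2K\,u$ and $-2\kappa\,\text{tr}_h\hat g$. A secondary technical issue is justifying the maximum principle on the non-compact $M$: here we use that $h(t)$ is uniformly equivalent to $\hat g$ on each slab (thanks to the curvature bounds on $h(t)$ from Theorem \ref{shishortime} and the bounded curvature of $\hat g$), which places us in the standard non-compact framework. The gradient terms discarded in the logarithmic inequalities only help, since they appear with a favourable sign at an interior spacetime maximum.
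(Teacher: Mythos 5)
Your proof is correct and follows essentially the same route as the paper: the parabolic Schwarz-lemma (Chern--Lu type) computation applied to the two traces $\tr_h\hat g$ and $\tr_{\hat g}h$, ODE comparison to produce $v_1$ and $v_2$, and Shi's eigenvalue sandwich $\sum_i(\mu_i-1)^2/\mu_i = \tr_{\hat g}h+\tr_h\hat g-2n$ to convert the two trace bounds into the two-sided estimate. The only cosmetic difference is that you phrase the differential inequalities in the logarithmic form $(\partial_t-\Delta)\log u\le 2Ku$ and $(\partial_t-\Delta)\log v\le -2\kappa\,\tr_h\hat g$, while the paper works directly with $\Box\phi\le 2K\phi^2$ and $\Box\psi\le -2\kappa v_1(t)\psi$; the two are interchangeable here and lead to the same ODE comparison, and both you and the paper handle the noncompact maximum principle in the same way, appealing to the a priori bounds that come from the assumed curvature control on $h(t)$ on each slab (via Ni--Tam or an exhaustion argument).
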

\begin{proof}
(i)
Let  $\phi(t):= \tr_{h(t)} \hat{g}$.
Let $\Box=\frac{\p}{\p t}-\Delta$, where $\Delta$ is the Laplacian with respect
to $h(t)$.  Then as in \cite{ST}, we can calculate in a normal coordinate relative to $h(t)$ and use \eqref{krf} to get
\begin{equation}
\begin{split}
\Box \phi =&  ((h_t)^{i\jbar}\hat{g}_{i\jbar})-h^{k\bar{l}}(h^{i\jbar}\hat{g}_{i\jbar})_{k\bar{l}}\\
=&  (R^{i\jbar}\hat{g}_{i\jbar})- (R^{i\jbar}\hat{g}_{i\jbar})+h^{k\bar{l}}h^{i\bar{j}}\widehat{R}_{i\bar{j}k\bar{l}}-\widehat{g}^{p\bar{q}}h^{k\bar{l}}h^{i\bar{j}}\partial_{k}\widehat{g}_{i\bar{q}}\partial_{\bar{l}}\widehat{g}_{p\bar{j}}\\
& \leq 2K\phi^{2}.
\end{split}
\end{equation}
Now $v_1(t)$ is the positive solution to the
ODE $$\frac{dv_1(t)}{dt}=2Kv_1^{2}(t); \hspace{5pt} v_1(0)=n$$
for $t\in[0,T)$. Let $S\in (0, \min\{T_0,T\})$ be fixed. Since $h(t)$ has uniformly bounded curvature on $M\times [0, S]$ we have $h(t)\ge C_1h_0\ge C_1\hg$ for some $C_1>0$ and hence
$\phi$ is a bounded function on $M\times [0, S]$. Moreover, $v_1$ is also a bounded function on $M\times[0,S]$. Let $A=\sup_{M\times[0,S]}(\phi+v_1)$. Then on $M\times[0,S]$
\begin{eqnarray*}
\Box \lf(e^{-(2AK+1)t}(\phi-v_1)\ri) &\\
\leq  e^{-(2AK+1)t}&\lf[2K(\phi^{2}-v_1^{2})-(2AK+1)(\phi-v_1)\ri] \\
 =  e^{-(2AK+1)t}&\lf[2K (\phi+v_1)-(2AK+1)\ri](\phi-v_1)
 \end{eqnarray*}
which is nonpositive at the points where $\phi-v_1\ge0$. Using the fact that $h(t)$ has uniformly bounded curvature on $M\times[0,S]$ and the fact that $e^{-(2AK+1)t}(\phi-v_1)\le0$ at $t=0$, which is uniformly bounded on $M\times[0,S]$,  we conclude that $e^{-(2AK+1)t}(\phi-v_1)\le0$ and thus $(\phi-v_1)\le0$ on $M\times[0,S]$ by the maximum principle, see \cite[Theorem 1.2]{NT} for example.
This proves (i).

(ii) Let $\psi(t):= \tr_{\hat{g}} h(t).$ For any fixed $S\in [0, min\{T_h, T\})$, as in \cite{C} we calculate in a normal coordinate relative to $\hat{g}$ and use \eqref{krf} to get  that on $M\times[0, S)$:

\begin{equation}
\begin{split}
\Box \psi= & (\hat{g}^{i\jbar}(h_t)_{i\jbar})-h^{k\bar{l}}(\hat{g}^{i\jbar}h_{i\jbar})_{k\bar{l}}\\
= &-(\hat{g}^{i\jbar}R_{i\jbar})-h^{k\bar{l}}(\hat{R}^{i\jbar}_{k\bar{l}}h_{i\jbar})+(\hat{g}^{i\jbar}R_{i\jbar})-\widehat{g}^{i\bar{j}}h^{p\bar{q}}h^{k\bar{l}}\partial_{i}h_{p\bar{l}}\partial_{\bar{j}}h_{k\bar{q}}\\
=&-h^{k\bar{l}}h_{i\bar{j}}\widehat{R}_{k\bar{l}}^{\,\,\bar{j}i}-\widehat{g}^{i\bar{j}}h^{p\bar{q}}h^{k\bar{l}}\partial_{i}h_{p\bar{l}}\partial_{\bar{j}}h_{k\bar{q}}\\
 & \leq  -2\kappa v_1(t)\psi\\
 &\leq -2\kappa v_1(S)\psi\\
\end{split}
\end{equation}
by (i).
 Let $w_S(t)=nCe^{-2cv_1(S)t}$ be the solution to the ODE $$\frac{dw_S(t)}{dt}=-2cv_1(S)w_S(t); \hspace{5pt} w_S(0)=nC.$$
  Then arguing as above, we have $\psi\leq w_S$ on $M^{n}\times[0,S]$.  In
particular, we get $\psi(S)\leq w_S(S)$ for every $S\in [0, min\{T_0, T\})$.

 So far, we have $\phi(t)\leq v_1(t)$, and $\psi(t)\leq v_2(t)$ on $M\times [0,
min\{T_0, T\})$ where $v_1, v_2$ are as in the statement of the Lemma.  Now we follow an idea from \cite{Shi1}.  At any point in
$(p, t)\in M\times[0, min\{T_0, T\})$, let $\lambda_i's$ be the eigenvalues
of $h$ with respect to $\hat{g}$, and calculate at $(p, t)$

\begin{equation}\label{equivancecloseness}
\begin{split}
\displaystyle \sum_{i=1}^n \frac{1}{\lambda_i} (1-\lambda_i)^2=&\sum_{i=1}^n
\frac{1}{\lambda_i} +\lambda_i -2\\
\le&\phi+\psi-2n\\
\le&v_1(t)+v_2(t)-2n\\
\end{split}
\ee
and thus for any fixed $i$ we have
\be
-w(t) \leq \lambda_i-1\leq w(t)
\ee
where $w(t)=\sqrt{v_2(t)(v_1(t)+v_2(t)-2n)}$.  The conclusion in (ii) then
follows.

\end{proof}

 The following lemma basically says that if a local solution $h(t)$ to
\eqref{krf}  is a priori uniformly equivalent to a fixed metric $\hat{g}$ in
space time, and close to $\hat{g}$ at time $t=0$, then it remains close to
$\hat{g}$
in a uniform space time region.  Note that in contrast to Lemma
\ref{lemgequivalenttoboundedcurvature}, the a priori assumption here is on
$h(t)$ for all $t$.

\begin{lem}\label{lemgequivalenttoboundedcurvaturelocal}
Let $h(t)$ be a smooth solution to \eqref{krf} on $B(1)\times [0,
T)$ with $h(0)=h_0$ where $B(1)$ is the unit Euclidean ball in $\C^n$. Let
$\hat g$ be a smooth \K metric on $B(1)$.   Suppose
\be\label{lemgequivalenttoboundedcurvaturelocale1}
  N^{-1}\hat{g}\leq h(t)\le N\hat g
\ee
on $B(1)\times [0,T)$ for some $N>0$, and that
\be\label{lemgequivalenttoboundedcurvaturelocale1}
 \hat{g}\leq h_0\le C\hat g
\ee
on $B(1)$.  Then there exists a positive continuous function $a(t):[0, T)\to \R$
depending only on $\hat{g}, N, C$ and $n$ such that
\be\label{lemgequivalenttoboundedcurvaturelocale1}
 \frac{(1-a(t))}{C}h_0\leq h\le (1+a(t))h_0
\ee
on $B(1/2) \times[0,T)$, where $a(0)=n\sqrt{C(C-1)}$.
\end{lem}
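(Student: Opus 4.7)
The plan is to adapt the proof of Lemma~\ref{lemgequivalenttoboundedcurvature} to the local setting by replacing the complete-manifold maximum principle with one localized via a spatial cutoff. Define $\phi(x,t):=\tr_{h(t)}\hat g$ and $\psi(x,t):=\tr_{\hat g}h(t)$. The pointwise computations in the proof of Lemma~\ref{lemgequivalenttoboundedcurvature} go through unchanged to give
\begin{equation*}
\Box\phi\leq 2K\phi^{2},\qquad \Box\psi\leq -2\kappa\,\phi\,\psi,
\end{equation*}
on $B(1)\times[0,T)$, where $K$ and $\kappa$ are the finite upper and lower bounds for the bisectional curvature of $\hat g$ on $\overline{B(3/4)}$. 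The hypothesis $\hat g\leq h_{0}\leq C\hat g$ gives $\phi(\cdot,0)\leq n$ and $\psi(\cdot,0)\leq nC$, and the a priori equivalence $N^{-1}\hat g\leq h(t)\leq N\hat g$ gives the global a priori bounds $\phi,\psi\leq nN$ throughout $B(1)\times[0,T)$.

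Next I would fix a smooth spatial cutoff $\eta\in C_{c}^{\infty}(B(3/4))$ with $\eta\equiv 1$ on $B(1/2)$, and apply the parabolic maximum principle to the auxiliary function
\begin{equation*}
F(x,t)\;=\;e^{-Lt}\bigl(\phi(x,t)-\bar v_{1}(t)\bigr)\;-\;Q\bigl(1-\eta(x)\bigr),
\end{equation*}
where $\bar v_{1}$ is the ODE comparison $v_{1}$ from Lemma~\ref{lemgequivalenttoboundedcurvature}, truncated above by $nN$ past its blow-up time and smoothed to be continuous on $[0,T)$ with $\bar v_{1}(0)=n$, and $L,Q$ are large positive constants depending only on $\hat g,N,n$. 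Taking $Q$ sufficiently large makes $F\leq 0$ on $\{\eta\leq 1/2\}\times[0,T)$ via $\phi\leq nN$; the inequality $F(\cdot,0)\leq 0$ follows from $\phi(\cdot,0)\leq n=\bar v_{1}(0)$; and at any candidate interior positive maximum with $\eta>1/2$, the computation of Lemma~\ref{lemgequivalenttoboundedcurvature} handles the $e^{-Lt}(\phi-\bar v_{1})$ piece, while the cutoff-induced term $-Q\Delta_{h(t)}\eta$ is uniformly bounded thanks to the equivalence of $h(t)$ with $\hat g$ and is absorbed by enlarging $L$, producing a contradiction. One concludes $F\leq 0$, hence $\phi\leq\bar v_{1}(t)$ on $B(1/2)\times[0,T)$. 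An identical argument applied to $\psi$ with $v_{2}$ from Lemma~\ref{lemgequivalenttoboundedcurvature} (suitably truncated) yields $\psi\leq\bar v_{2}(t)$ on $B(1/2)\times[0,T)$ with $\bar v_{2}(0)=nC$.

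With both one-sided bounds in hand, the pointwise eigenvalue identity \eqref{equivancecloseness} gives, for each eigenvalue $\lambda_{i}$ of $h(t)$ with respect to $\hat g$, $|\lambda_{i}-1|\leq a(t):=\sqrt{\bar v_{2}(t)\bigl(\bar v_{1}(t)+\bar v_{2}(t)-2n\bigr)}$, and $a(0)=n\sqrt{C(C-1)}$ as required. Thus $(1-a(t))\hat g\leq h(t)\leq(1+a(t))\hat g$ on $B(1/2)\times[0,T)$, and combining with $\hat g\leq h_{0}\leq C\hat g$ converts these into the claimed bounds relative to $h_{0}$. I expect the main technical obstacle to be in the localized maximum principle of the second paragraph: verifying carefully that the cutoff-induced term $-Q\Delta_{h(t)}\eta$ can be uniformly absorbed by an appropriate choice of $L$ for all $t\in[0,T)$, and arranging the truncations of $v_{1},v_{2}$ past their blow-up times smoothly enough that the resulting $a(t)$ is genuinely continuous on $[0,T)$ while preserving the initial value $a(0)=n\sqrt{C(C-1)}$.
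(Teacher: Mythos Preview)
Your localization scheme has a genuine gap at the step where you claim the cutoff term is ``absorbed by enlarging $L$.'' At a positive interior maximum of $F=e^{-Lt}(\phi-\bar v_1)-Q(1-\eta)$ one has
\[
0\le \Box F \le e^{-Lt}\bigl[2K(\phi+\bar v_1)-L\bigr](\phi-\bar v_1)-Q\,\Delta_{h(t)}\eta.
\]
Positivity of $F$ only gives $\phi-\bar v_1>Qe^{Lt}(1-\eta)\ge 0$; since the maximum may occur where $\eta=1$, the factor $\phi-\bar v_1$ can be arbitrarily small. Hence $-Le^{-Lt}(\phi-\bar v_1)$ has no definite negative size no matter how large $L$ is, while $-Q\,\Delta_{h(t)}\eta$ is a fixed bounded quantity that may well be strictly positive at that point. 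No contradiction results. (Adding a further penalty $-Mt$ to $F$ would repair this and give $\phi\le\bar v_1(t)+Me^{Lt}t$ on $B(1/2)$, which would still suffice for the lemma, but the argument as written does not close.)

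The paper avoids this by \emph{multiplying} rather than adding the cutoff: it works with $\eta\phi$ directly, choosing $\eta$ so that $|\hat\nabla\eta|^2\le C_1\eta$ and $|\partial\bar\partial\eta|_{\hat g}\le C_2$. The a priori bound $\phi\le nN$ coming from $N^{-1}\hat g\le h(t)$ then makes every term in the expansion of $\Box(\eta\phi)$ bounded by a constant except the gradient cross-term, yielding
\[
\Box(\eta\phi)\le C_4-\tfrac{2}{\eta}\,\langle\nabla\eta,\nabla(\eta\phi)\rangle.
\]
The gradient term vanishes at a spatial maximum of $\eta\phi$, so the maximum principle applied to $\eta\phi-C_4t$ gives $\eta\phi\le n+C_4t=:\tilde v_1(t)$, and likewise $\eta\psi\le nC+C_5t=:\tilde v_2(t)$. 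These \emph{linear} comparison functions (in place of the ODE solutions you tried to preserve) feed into the eigenvalue identity to produce $a(t)=\sqrt{\tilde v_2(t)(\tilde v_1(t)+\tilde v_2(t)-2n)}$, which is manifestly continuous on all of $[0,T)$ with $a(0)=n\sqrt{C(C-1)}$, and no truncation or smoothing past a blow-up time is needed.
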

\begin{proof}
As in the previous Lemma, let $\phi=\tr_h \hat{g}$, $\psi=\tr_{\hat{g}} h$ on
$B(1)\times [0,T_0)$.  Choose some smooth non-negative cutoff function on
$\eta:B(1)\to \R$ satisfying $\eta|_{B(1/2)}=1$, $\eta|_{(B(3/4))^c}=0$,
$|\hat{\nabla} \eta|^2 \leq C_1 \eta$, $|\p\bar\p\eta|_{\hat g}\leq C_2 $ on
$B(1)$ for some constants $C_1, C_2$ depending only on $\hat g$.  Using the fact that $h(t)\ge N^{-1}\hat g$, we have
$$|\nabla \eta|^2=h^{\ijb}\eta_i\eta_{\bar j}\le N|\hat\nabla\eta|^2\le NC_1,$$ and $$|\Delta\eta|=\lf|h^{\ijb}\eta_{\ijb}\ri|\le N|\p\bar \p\eta|_{\hat g}\le NC_2.$$

Now we consider the function $\eta\phi$
on $B(1)\times [0,T)$.  Then in
$B(1)\times [0,T)$ at the point where  $\eta>0$, as in the proof    of Lemma \ref{lemgequivalenttoboundedcurvature} (i) we obtain
\be
\begin{split}
(\partial_t-\Delta)(\eta\phi)&=\eta(\partial_t-\Delta )\phi-2<\nabla\eta,\nabla
\phi> -\phi \Delta \eta\\
&=\eta(\partial_t-\Delta)\phi-2\frac{<\nabla\eta,\nabla
(\eta\phi)>}{\eta}+\frac{2|\nabla \eta|^2}{\eta}\phi -\phi \Delta \eta\\
&\leq  \eta C_3\phi^2 -2\frac{<\nabla\eta,\nabla
(\eta\phi)>}{\eta}+2NC_1\phi+NC_2 \phi\\
&\leq C_4-2\frac{<\nabla\eta,\nabla (\eta\phi)>}{\eta}\\
\end{split}
\ee
where the constants $C_3, C_4$ depend only on $\hat{g}, N, C$ and $n$, where we have used the assumption \eqref{lemgequivalenttoboundedcurvaturelocale1}. Since $\eta\phi$ is zero outside $B(3/4)$, applying    the maximum principle to $\eta\phi-C_4t$ one can conclude that
$$\eta\phi\leq n+  C_4t=:\tilde{v_1} (t)$$
on $B(1)\times [0,T)$.

Now consider the function $\eta\psi$ on $B(1)\times [0,T)$.  Using the proof of Lemma
\ref{lemgequivalenttoboundedcurvature} (ii) and estimating as above we obtain
\begin{equation}
\eta\psi\leq nC +  C_5t=:\tilde{v_2} (t)
\ee
on $B(1)\times [0,T)$ for som constants $C_5$ depending only on $\hat{g}, N, C$
and $n$.

Now at any point in $(p, t)\in B(1/2)\times [0,T)$, let $\lambda_i's$ be the
eigenvalues of $h$ with respect to $\hat{g}$.  Then as in the proof of Lemma
\ref{lemgequivalenttoboundedcurvature} (ii) we get that at $(p, t)$

\be
-\tilde{w}(t) \leq \lambda_i-1\leq \tilde{w}(t)
\ee
where $\tilde{w}(t)=\sqrt{\tilde{v_2}(t)(\tilde{v_1}(t)+\tilde{v_2}(t)-2n)}$.
Since $\tilde{v_1} (0)=n$ and $\tilde{v_2} (0)=nC$, the lemma  follows easily from this.
\end{proof}

 In contrast to the previous lemma, in the following lemmas we only assume a lower bound on a solution $h(x,t)$ to \eqref{krf}.

\begin{lem}\label{lemgboundedbelowboundedcurvature} Let $h(x,t)$ be a smooth solution to \eqref{krf} on $M\times[0,T)$ with $h(0)=h_0$. Let $p\in M$. Suppose
there is a positive continuous function $\a(t):[0, T)\to \R$ such that
 $$
 h(t)\ge \a(t)\hat g.
 $$
where $\hat{g}$ is a complete \K metric with bounded curvature.  Then,
there exists a positive continuous function $\beta(r,t):[1, \infty)\times [0,T)\to \R$
$\beta(r, t)$ depending
only on $\hat g$ the upper bound of $\tr_{\hat g}h_0$ in $B_{\hat{g}}(p,2 r)$, the lower
bound of scalar curvature $R(0)$ of $h(0)$ in $B_{\hat{g}}(p, 2r)$, $\a(t)$
and the dimension $n$ such that for $r\ge 1$
$$
h(t)\le \beta(r, t) \hat g.
$$
in $B_{\hat{g}}(p, r)\times [0,T)$.

\end{lem}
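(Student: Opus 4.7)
The plan is to bound $\psi(x,t) := \tr_{\hat g}h(x,t)$ on $B_{\hat g}(p,r)\times[0,T)$ by controlling the volume form ratio $F(x,t) := \log\det(h(x,t)/\hat g(x))$ and then converting a bound on $F$ (the log of the product of eigenvalues of $h$ relative to $\hat g$) into a bound on each individual eigenvalue via the pointwise lower bound $h \ge \alpha(t)\hat g$. Mimicking the computation in the proof of Lemma \ref{lemgequivalenttoboundedcurvature}(ii), at a point where $h$ is diagonalized in a $\hat g$-unitary frame with eigenvalues $\lambda_i \ge \alpha(t)$, the bounded bisectional curvature of $\hat g$ (bounded in absolute value by some $K_0$) yields $\Box\psi \le K_0\,\tr_h\hat g\cdot\psi \le K_0n/\alpha(t)\cdot\psi$. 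More importantly, the K\"ahler-Ricci flow together with the identity $R_{i\bar j}(h) = -\partial_i\partial_{\bar j}\log\det h$ gives $\partial_t F = -R(h)$ and $\Box F = -h^{i\bar j}\widehat R_{i\bar j}$, so $|\Box F|\le K_0n/\alpha(t)$.

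Next, scalar curvature under the K\"ahler-Ricci flow satisfies $\partial_t R(h) = \Delta_h R(h) + |\Ric(h)|_h^2 \ge \Delta_h R(h)$, so $R(h)$ is a supersolution of the heat equation on $(M,h(t))$. Choose a cutoff $\eta\in C_c^\infty(B_{\hat g}(p,2r))$ with $\eta\equiv 1$ on $B_{\hat g}(p,r)$ and $|\widehat\nabla\eta|_{\hat g}^2\le C_1\eta$, $|\widehat\nabla^2\eta|_{\hat g}\le C_2$, where $C_1,C_2$ depend only on $\hat g$ and $r$ and the existence follows from the bounded geometry of $\hat g$. The lower bound $h\ge \alpha(t)\hat g$ converts these into $|\nabla\eta|_h^2/\eta\le C_1/\alpha(t)$ and $|\Delta_h\eta|\le C_2/\alpha(t)$. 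A parabolic maximum principle argument (in the style of \cite{NT}) applied to $R(h)$ with this cutoff propagates the given initial bound $R(h_0)\ge R_{\min}$ on $B_{\hat g}(p,2r)$ to a pointwise lower bound $R(h)(x,t)\ge -K(r,t)$ on $B_{\hat g}(p,r)\times[0,T)$, with $K$ continuous in $(r,t)$ and depending only on $\hat g$, $n$, $\alpha$, $r$ and $R_{\min}$. Integrating $\partial_t F = -R(h)$, and using the AM-GM inequality $F(\cdot,0)\le n\log(\psi(\cdot,0)/n)$ together with the given upper bound on $\tr_{\hat g}h_0$ on $B_{\hat g}(p,2r)$, yields an upper bound $F(x,t)\le \beta_F(r,t)$ on $B_{\hat g}(p,r)\times[0,T)$.

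Finally, at each $(x,t)\in B_{\hat g}(p,r)\times[0,T)$, writing $\prod_i\lambda_i = e^{F(x,t)}\le e^{\beta_F(r,t)}$ and combining with $\lambda_i\ge\alpha(t)$ gives $\lambda_1\le e^{\beta_F(r,t)}/\alpha(t)^{n-1}$, whence $\psi\le n\lambda_1\le \beta(r,t)$, as required. The principal obstacle is the localization of the parabolic maximum principle in the middle step: a direct cutoff applied to the $\psi$-evolution produces error terms proportional to $\psi$ rather than $\eta\psi$, so the maximum principle cannot close on $\psi$ itself, and the supersolution $R(h)$ likewise has no a priori control at $\partial B_{\hat g}(p,2r)$. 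Working with $F$ (whose $\Box$ is bounded, not merely proportional to $F$) and with the supersolution property of $R(h)$ fed by the initial datum $R_{\min}$ is precisely what allows the cutoff argument to go through.
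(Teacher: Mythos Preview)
Your overall architecture---bound $R(h)$ from below locally, integrate $\partial_t\log\det h=-R$, then use $\lambda_i\ge\alpha(t)$ to pass from a determinant bound to an eigenvalue bound---is exactly the paper's. The gap is in the middle step, where you discard too much of the scalar curvature evolution.

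You write $\partial_tR=\Delta_hR+|\Ric|_h^2\ge\Delta_hR$ and then claim that a cutoff maximum principle ``in the style of \cite{NT}'' propagates the initial lower bound $R_{\min}$ on $B_{\hat g}(p,2r)$ into the interior. This does not work: with only $\Box R\ge 0$, the localized quantity $\varphi R$ satisfies, at an interior negative minimum where $\nabla(\varphi R)=0$,
\[
0\ \ge\ \Box(\varphi R)\ \ge\ -R\,\Delta_h\varphi+2R\,\frac{|\nabla\varphi|_h^2}{\varphi}\ \ge\ -C(r,t)\,|R|,
\]
which is vacuous. The cutoff errors are of size $|R|$, not $\varphi|R|$, and nothing balances them. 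The same defect kills any attempt to localize $F$ directly: although $|\Box F|\le K_0 n/\alpha(t)$ is bounded, the cutoff errors $-F\Delta_h\eta-2\langle\nabla F,\nabla\eta\rangle$ are again of size $|F|$, so the maximum principle does not close.

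The paper keeps the nonlinear term: by Cauchy--Schwarz $|\Ric|_h^2\ge\tfrac1n R^2$, so $\Box R\ge\tfrac1n R^2$. Then at a negative minimum of $\varphi R$ one gets
\[
0\ \ge\ \tfrac1n\varphi R^2-C(r,t)\,|R|,
\]
hence $\varphi|R|\le nC(r,t)$, which \emph{does} bound $\varphi R$ from below independently of what $R$ does near $\partial B_{\hat g}(p,2r)$. The quadratic self-improvement is precisely what makes the localization go through; the bare supersolution property is not enough. Once this is fixed, your integration of $\partial_tF=-R$ and the eigenvalue step are fine and match the paper.
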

\begin{proof}   
 Let $d(x)$ be the distance with respect to $\hat g$ from $x$ to a fixed point $p\in M$.  Since $\hat g$ has bounded curvature, by \cite{Shi2} (see also \cite{T}) there exists a smooth positive function $\rho(x)$ satisfying $d(x)+1\leq \rho(x)\leq d(x)+C$ on $M$ for some $C>0$, with $|\hat\nabla \rho|$, $|\hat\nabla^2\rho|$ are bounded on $M$.  Hence without loss of generality, we may assume for simplicity that $d(x)$ is in fact smooth with $|\hat\nabla d|, |\hat \nabla^2d|$ bounded on $M$. 

Let $\phi(s)$ be smooth function on $\R$ such that $\phi=1$
for $s\le 1$ and is zero for $s\ge2$. Moreover, we assume $\phi'\le0$,
$(\phi')^2/\phi\le C_1$, $|\phi''|\le C_2$. Let $R$ be the scalar curvature of
$h(t)$. Then
\be
\heat R\ge \frac1n R^2.
\ee
on $M\times[0 ,T)$.  Let $\varphi(x)=\phi(d(x)/r)$. Then $\varphi(x)=0$ if $d(x)\ge 2r$. Fix some $T'<T$.  Then as in the proof of the previous lemma,  we compute
\bee
\begin{split}
| \nabla \varphi|^2=&\frac1{r^2}(\phi')^2|\nabla d|^2\\
=&\frac1{r^2}(\phi')^2h^{\ijb}d_i d_{\bar j}\\
\le&\frac1{ r^2\a(t)}(\phi')^2 \hat g^{\ijb}d_i d_{\bar j}\\
\le&\frac{C_3}{r^2}(\phi')^2
\end{split}
\eee
on $B(2r)\times[0, T']$ for some constant $C_3$ depending only on $T', \a(t)$ and $\hat{g}$.
Similarly,
\bee
\begin{split}
| \Delta \varphi|=&|\frac1r\phi'\Delta d+\frac1{r^2}\phi''|\nabla d|^2|\\
\le & C_4(\frac1r+\frac1{r^2})
\end{split}
\eee
 on $B(2r)\times[0, T']$ where $C_4$ depends on $C_1, C_2, T', \a(t)$ and $\hat{g}$.

 Now
 \be
 \begin{split}
 \heat (\varphi R)=&\varphi \heat R-R\Delta \varphi-2\la\nabla R,\nabla
\varphi\ra\\
 \ge&\frac1n\varphi R^2-C_5|R|- 2\la\nabla R,\nabla \varphi\ra
 \end{split}
 \ee
on $B(2r)\times[0, T']$ where $C_5$ depends only on $C_4$ and $r$. Suppose the infimum of $\varphi R$ on $B(2r)\times[0, T']$ is
attained at $t=0$, then $R\ge \min\{0,\inf_{B_{\hat{g}}(p, 2r)}R(h_0)\}$ on $B_{\hat{g}}(r)$.
Suppose instead that $\varphi R$ attains a negative minimum at some $(x, t) \in B(2r)\times[0, T']$ where $t>0$. Then at
$(x,t)$, $\nabla R=-\frac{R\nabla \varphi}{\phi}$. Hence at this point,
 \be
 \begin{split}
 0\ge&\frac1n\varphi R^2-C_6|R|
 \end{split}
 \ee
 where $C_6$ depends only on $C_6, C_3$ and $r$. Hence
 $$
 \varphi^2 |R|\le n C_6.
 $$
on $B(2r)\times[0, T']$ and we conclude that $R\ge -C_7$ on
$B_{\hat{g}}(p, r)\times[0,T']$ for some $C_8$ depending only on $T', \hat g, r, \a(t)$.
 On the other hand,
 $$
 \frac{\p}{\p t}\log\lf(\frac{\det (h_{\a\bar\beta})(t)}{\det(h_{
\a\bar\beta}(0))}\ri)=-R\le C_7.
 $$
 So
 $$
\frac{ \det (h_{\a\bar\beta})(t)}{\det (\hat g_{\a\bar\beta}) }\le e^{C_7t}\frac{\det (h_{\a\bar\beta})(0)}{\det
(\hat g_{\a\bar\beta})} .
 $$
 on $B_{\hat{g}}(p, r)\times[0, T']$. Let $\lambda_i$ be eigenvalues of $h(t)$ with respect to $\hat{g}$. By part (i), $\lambda_i(x,T')\ge \a(T')$ for each $i$ and $x\in B_{\hat{g}}(p, r)$, and  the above inequality then implies $\lambda_i(x,T')\geq \beta(r, T')$ for some $\beta(r, T')$ depending only on the those constants listed in the Lemma.  Moreover, it is not hard to see that $\beta(r, T')$ can be chosen to depend continuously on $r, T'$ as $\a(t)$ is continuous.  The Lemma follows as $T'$ was chosen arbitrarily.
\end{proof}

\begin{rem} Given only a local solution $h(t)$ to \eqref{krf} on $B(1)\times[0, T)$ where $B(1)$ is the unit ball on $\C^n$, it is not hard to see from its proof that the conclusion of Lemma \ref{lemgboundedbelowboundedcurvature} will hold in $B(r)\times[0, T)$ for all $r\leq 1/2$.
\end{rem}

\section{\K Ricci flow: general existence Theorems}

We are now ready to state and prove our main existence Theorems for \eqref{krf} using the estimates in the previous section.  Theorems \ref{thmgboundedbelowbyboundedcurvature} and \ref{thmgequivalenttoboundedcurvature} provide general existence Theorems for \eqref{krf} when the initial \K metric is realized as a limit of a sequence of \K metrics satisfying certain properties.  The   curvature of the initial metric may be unbounded or even undefined.  As an application, we obtain an existence result for \K metric which is some perturbation of a complete \K metric with bounded curvature.   In Corollaries  \ref{existencetimeestimate-2} and \ref{existencetimeestimate} we apply the above Theorems to provide an estimate for the maximal existence time for \eqref{krf} assuming that the curvature of the initial metric is bounded. In some cases, we have long time existence.

In the following, we say that a sequence of smooth metrics $h_k$ converge smooth to a metric $g$ on a set  $U$, if $h_k$ converge to $g$ in $C^\infty$ norm on $U$.

 \begin{thm}\label{thmgboundedbelowbyboundedcurvature}
Let $g_0$ be a complete continuous Hermitian metric on a noncompact complex manifold $M^n$. Suppose there exists a sequence  $\{h_{k,0}\}$   of smooth complete \K metrics with bounded curvature on $M$
converging
uniformly on compact subsets to $g_0$ and another complete \K metric $\hg$ on $M$  with bounded curvature and holomorphic bisectional curvature bounded from above by $K$ such that

\begin{itemize}
\item[(i)]    $ h_{k,0} \ge \hat{g}$ for all $k$;
  \item [(ii)] for every $k$, \KR flow \eqref{krf} has
smooth  solution $h_k(t)$ with initial value $h_{k,0}$ on $M\times[0,T')$ for some $T'>0$
independent of $k$ such that the curvature of $h_k(t)$ is uniformly bounded on $M\times[0,T_1]$ for all $0<T_1<T'$;
\item [(iii)] The scalar curvature $R_k$ of $h_{k,0}$ satisfies: for any $r>0$,
there exists a constant $C_r>0$ such that
$R_k\geq -C_r$ on $B_{\hat{g}}(p, r)$ for some fixed point $p\in M$ and all
$k$.
\end{itemize}
Let $T=\min\{T',\frac1{2nK}\}$ if $K>0$, otherwise let $T=T'$.  Then the \KR flow \eqref{krf} has a complete smooth solution $g(t)$ on $M\times (0, T)$ which extends continuously to $M\times[0,T)$ with $g(0)=g_0$ and satisfies $g(t)\ge (1/n-2nK)\hat g$ on $M\times (0, T)$.

Moreover, if $g_0$ is smooth and $\{h_{k,0}\}$ converges smoothly and uniformly on
compact subsets of $M$, then $g(t)$ extends to a smooth solution to \eqref{krf} on $M\times
[0, T)$ with $g(0)=g_0$.

\end{thm}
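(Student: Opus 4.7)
The plan is to construct $g(t)$ as a $C^\infty_{\mathrm{loc}}$ subsequential limit of the $h_k(t)$ on $M\times(0,T)$, and then verify continuity at $t=0$ separately. The main tools are the estimates of Section 3 together with Theorem \ref{t-EvanKrylov}.

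First I would establish uniform-in-$k$ two-sided equivalence with $\hg$ on parabolic compact sets. By hypothesis (i) and Lemma \ref{lemgequivalenttoboundedcurvature}(i) applied to $h_k$ with background $\hg$ (using (ii) to supply the required curvature control), one gets
\[
 h_k(t)\ge \lf(\tfrac1n-2Kt\ri)\hg \quad\text{on } M\times[0,T),
\]
uniformly in $k$. Feeding this as the function $\a(t)$ into Lemma \ref{lemgboundedbelowboundedcurvature}, and using that uniform convergence $h_{k,0}\to g_0$ on compact sets gives a uniform upper bound on $\tr_{\hg}h_{k,0}$ on each ball $B_{\hg}(p,2r)$ while (iii) provides a uniform lower bound on the scalar curvature of $h_{k,0}$ there, one obtains a function $\beta(r,t)$ independent of $k$ with
\[
 h_k(t)\le \beta(r,t)\,\hg \quad\text{on } B_{\hg}(p,r)\times[0,T).
\]
Since $\hg$ has bounded curvature, by first evolving it for a short time under \eqref{krf} we may replace it with a uniformly equivalent metric of bounded geometry of infinite order; Theorem \ref{t-EvanKrylov}(i) then supplies, for each $j$ and $r$, a uniform-in-$k$ bound $|\hat\nabla^j h_k|^2_{\hg}\le C_{j,r}/t^j$ on $B_{\hg}(p,r/2)\times(0,T')$ for each $T'<T$. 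An Arzel\`a--Ascoli and diagonal extraction then produces a subsequence $h_{k_j}(t)\to g(t)$ in $C^\infty_{\mathrm{loc}}(M\times(0,T))$, with $g(t)$ a smooth solution of \eqref{krf} inheriting $g(t)\ge(1/n-2Kt)\hg$; completeness of $g(t)$ follows from completeness of $\hg$.

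The main obstacle is to show $g(t)$ extends continuously to $t=0$ with $g(0)=g_0$, since the equivalence bounds above do not by themselves imply closeness to $h_{k,0}$. I would argue locally: fix $x_0\in M$ and $\e>0$. Because $g_0$ is continuous and $h_{k,0}\to g_0$ uniformly on compacts, one can choose a small holomorphic coordinate ball $U$ about $x_0$ and a constant Hermitian inner product $\tilde g$ on $U$ such that $(1-\e)\tilde g\le h_{k,0}\le(1+\e)\tilde g$ on $U$ for all sufficiently large $k$. Combined with the uniform global bound on $h_k(t)$ established above, Lemma \ref{lemgequivalenttoboundedcurvaturelocal} applied in $U$ with background $\tilde g$ yields
\[
 \frac{1-a(t)}{1+\e}\,h_{k,0}\le h_k(t)\le (1+a(t))\,h_{k,0}
\]
on a smaller ball $V$ about $x_0$ with $\overline V\subset U$, where $a(t)$ is controlled uniformly in $k$ and $a(0)$ can be made as small as desired by shrinking $\e$. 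Letting $k_j\to\infty$ and then $\e\to 0$ gives $g(t)\to g_0$ locally uniformly as $t\to 0$.

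Finally, in the smooth case where $h_{k,0}\to g_0$ in $C^\infty_{\mathrm{loc}}$, Theorem \ref{t-EvanKrylov}(ii) furnishes uniform $C^j$ bounds on the $h_k$ up to $t=0$, so the limit $g(t)$ extends smoothly to $M\times[0,T)$ with $g(0)=g_0$. I expect the local continuity argument at $t=0$ to be the hardest step, because Lemmas \ref{lemgequivalenttoboundedcurvature}(ii) and \ref{lemgequivalenttoboundedcurvaturelocal} only provide closeness of the flow to the background metric rather than to the initial data itself, and one has to use continuity of $g_0$ to rechoose a convenient background near each point.
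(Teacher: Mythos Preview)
Your proposal is correct and follows essentially the same route as the paper: the lower bound from Lemma~\ref{lemgequivalenttoboundedcurvature}(i), the local upper bound from Lemma~\ref{lemgboundedbelowboundedcurvature} using hypotheses (i)--(iii), replacement of $\hg$ by a short-time evolved metric with bounded geometry of infinite order, extraction of the limit via Theorem~\ref{t-EvanKrylov}, and the local $t\to 0$ argument via Lemma~\ref{lemgequivalenttoboundedcurvaturelocal}. The only (inessential) difference is that for continuity at $t=0$ the paper takes the smooth K\"ahler metric $h_{k_0,0}$ for large $k_0$ as the local background in Lemma~\ref{lemgequivalenttoboundedcurvaturelocal}, whereas you freeze $g_0(x_0)$ as a constant Hermitian metric $\tilde g$; both choices work since $a(0)=n\sqrt{C(C-1)}$ depends only on $C$ and $n$ and tends to $0$ as the comparison constant $C\to 1$.
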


\begin{proof}
By Lemma \ref{lemgequivalenttoboundedcurvature}, we have
\be\label{e-1}
h_k(t)\ge \lf(\frac1n-2Kt\ri)\hat g
\ee
where $K$ as long as $t<T_0=1/(2nK)$. By Theorem \ref{shishortime}, let $\hat g(t)$ be
the solution  \KR flow in the theorem with initial condition
$\hat g$. Then for any $1>\e>0$ small,    choose $0<t_0$ small enough so that $(1-\e)\hg(t_0)\le \hg\le (1+\e)\hg(t_0)$. Then we have
\be\label{e-1-1}
h_k(t)\ge \lf(\frac1n-2Kt\ri)(1-\e)\hat g(t_0)
\ee
and   $\hat g(t_0)$ has bounded geometry of infinite order.  By
Lemma \ref{lemgboundedbelowboundedcurvature}, for there is a positive
continuous function $\beta(r,t):[1, \infty)\times[0, T_0)\to \R$ such that for $r\ge1$
\be\label{e0}
h_k(t)\le\beta(r,t)\hat g(t_0).
\ee
 in $\hat{B}(p,r)\times[0,T)$
where $T=\min\{T',\frac1{2nK}\}$ and $p\in M$ is
a fixed point.  We conclude from Theorem \ref{t-EvanKrylov} (i), that passing to some
subsequence,
the $h_k(t)$'s converge to a solution $g(t)$ of \KR on
$M\times(0,T)$ so that \eqref{e-1} is true.   Moreover, if $g_0$ is smooth and
$\{h_k\}$ converges smoothly and uniformly to $g_0$ on compact sets, then we see from Theorem
\ref{t-EvanKrylov} (ii) that in fact $g(t)$ extends to a smooth solution on
$M\times [0,T)$ such that $g(0)=g_0$.

We now prove $g(t)$ converge uniformly on compact set to  $g_0$ as $t\to0$ when $g_0$ is only assumed to be continuous.
Fix any $x\in M$ and a local biholomorphism $\phi:B(1)\to M$
where $B(1)$ is the open unit ball in $\C^n$, and $\phi(0)=x$.  Consider the
pullbacks $\phi^*h_k(t)$, $\phi^*h_k=\phi^*h_k(0)$, $\phi^* \hat{g}$, which by
abuse of notation we will simply denote by $h_k(t)$, $h_k$, $\hat{g}$,
respectively, for the remainder of proof.  In particular, $h_k(t)$ solves \KR
flow \eqref{krf} on $B(1)\times [0, T)$.

Now by
our hypothesis on the convergence of $h_k$, given any $\delta>0$ we may find
$k_0$ such that $|h_{k_0,0}-g_0|_{\hat g}\le \delta$ and
\be
(1-\delta)h_{k_0,0}\le h_{k,0}\le (1+\delta)h_{k_0,0}
\ee
for all $k\ge k_0$.  On the other hand, by \eqref{e-1-1} and \eqref{e0} we can find $N>0$ such that
\be
N^{-1}h_{k_0, 0}\le h_k(t)\le Nh_{k_0, 0}
\ee
in $B(1)\times [0,T/2)$ for all $k\ge k_0$.
Then by Lemma \ref{lemgequivalenttoboundedcurvaturelocal}, there exists a continuous function $a(t)$ depending on $N, h_{k_0}$ and $\delta$ such that
$$
(1-a(t))\frac{(1-\delta)^2}{(1+\delta)}h_{k_0, 0}\le h_k(t)\le (1+a(t))(1+\delta)h_{k_0, 0}
$$
in $B(\frac12)\times[0, T/2)$ with $a(0)= n\sqrt{C(C-1)}$, with $C=(1+\delta)/(1-\delta)$. Note that $a(t)$ is independent of $k$. Letting $k\to\infty$ gives
\be\label{E2}
(1-a(t))\frac{(1-\delta)^2}{(1+\delta)}h_{k_0, 0}\le g(t)\le (1+a(t))(1+\delta)h_{k_0, 0}
\ee
in $B(\frac12)\times(0,T/2)$. We then get
\bee
\begin{split}
\limsup_{t\to0}&  |g(t)-g_0|_{\hat g}\\
\le &\limsup_{t\to0}  \lf( |g(t)-h_{k_0, 0}|_{\hat g}+|h_{k_0,0}-g_0|_{\hat g}\ri)\\
 \le&  \lf[  \lf| 1-(1-a(0))\frac{(1-\delta)^2}{(1+\delta)}  \ri|+ \lf|(1+a(0))(1+\delta) -1\ri|\ri]|h_{k_0,0}|_{\hg}\\
&+\delta|h_{k_0,0}|_{\hg}
\end{split}
\eee
uniformly on $B(\frac12)$.  Then letting $\delta \to 0$ above, and using the fact that $a(0)\to 0$ as $\delta\to0$, and \eqref{e-1-1} and \eqref{e0} we conclude that
$$
\limsup_{t\to0}  |g(t)-g_0|_{\hat g}=0.
$$
uniformly on $B(\frac12)$.  Hence $g(t)$ converge to $g_0$ uniformly on compact sets as $t\to0$.

\end{proof}

We do not have any bound on the curvature of solution $g(t)$ in the previous theorem. Also in the previous theorem, we assume that the \KR flow \eqref{krf} has solution with initial condition $h_{k,0}$ on a fixed time interval independent of $k$. We want to remove this assumption and obtain curvature bound for the solutions. In order to do this, we assume $h_{k,0}$ also has an uniform upper bound.

 \begin{thm}\label{thmgequivalenttoboundedcurvature}
 Let $g_0$ be a complete continuous Hermitian metric on a noncompact complex manifold $M^n$. Suppose there exists a sequence  $\{h_{k,0}\}$   of smooth complete \K metrics with bounded curvature on $M$
converging
uniformly on compact subsets to $g_0$ and another complete \K metric $\hg$ on $M$ with bounded curvature and holomorphic sectional curvature bounded from above by $K$  such that
 \begin{enumerate}

 \item[(i)] $C^{-1} \hat{g} \leq h_{k,0} \leq C \hat{g}$ for some $C$ independent
of $k$;
 \item[(ii)] $h_k$ has bounded curvature for every $k$.

 \end{enumerate}
Let $T=1/(2CnK)$ if $K>0$, otherwise let $T=\infty$. Then  the \KR flow \eqref{krf} has a smooth
solution $g(t)$ on $M\times (0, T)$ such that
\begin{enumerate}
\item [(a)] $(1/(nC)- 2Kt)\hat g\le g(t)\le B(t)\hat g$ on $M\times (0, T)$ for
some positive
continuous function $B(t)$ depending only on $C$, $\hat{g}$ and $n$.
\item [(b)] $g(t)$ has bounded curvature for $t>0$. In
particular, for any $0<T'<T$ and for any $l\geq 0$ there exists a constant $C_l$ depending only on $C$,
$l$, $T'$, $\hat{g}$ and the dimension $n$ such that
     $$
 sup_M |\nabla^l \Rm(g(t))|^2_{g(t)}\le \frac {C_l}{t^{l+2}},
 $$
\item [(c)] $g(t)$ converges
uniformly on compact subsets to $g_0$ as $t\to 0$.
\end{enumerate}

 Moreover, if $g_0$ is smooth and $\{h_{k,0}\}$ converges smoothly and uniformly on
compact subsets of $M$, then $g(t)$ extends to a smooth solution on $M\times
[0, T)$ with $g(0)=g_0$.
\end{thm}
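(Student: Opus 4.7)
My plan is to realize $g(t)$ as a smooth limit of the Shi solutions $h_k(t)$ starting from $h_{k,0}$ and to transfer all bounds through the limit. First, Theorem \ref{shishortime} applied to each $h_{k,0}$ produces a solution $h_k(t)$ on $M\times[0,T_k)$ with bounded curvature on every compact subinterval. To apply Lemma \ref{lemgequivalenttoboundedcurvature}, which requires a one-sided lower bound by the background, I would rescale: set $\tilde g:=C^{-1}\hat g$, whose holomorphic bisectional curvature is bounded above by $CK$. Hypothesis (i) becomes $\tilde g\le h_{k,0}\le C^{2}\tilde g$, and Lemma \ref{lemgequivalenttoboundedcurvature} then gives, on $M\times[0,\min\{T_k,T\})$ with $T=1/(2nCK)$ (or $T_k$ if $K\le 0$),
\begin{equation*}
  \lf(\tfrac{1}{nC}-2Kt\ri)\hat g\;\le\; h_k(t)\;\le\; B(t)\hat g,
\end{equation*}
for a continuous $B(t)$ depending only on $C$, $n$ and $\hat g$, independent of $k$.

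The second step is to show $T_k\ge T$ for every $k$, so that this equivalence holds on all of $[0,T)$. Since $\hat g$ is only assumed to have bounded curvature, I would first apply Shi's theorem to $\hat g$ itself and fix a small $t_0>0$, replacing $\hat g$ by $\hat g(t_0)$ in the equivalences at the cost of a harmless constant; $\hat g(t_0)$ now has bounded geometry of infinite order, so Theorem \ref{t-EvanKrylov}(i) is applicable (through the local biholomorphisms of \cite{TY}). Combined with the uniform equivalence, Theorem \ref{t-EvanKrylov}(i) yields uniform-in-$k$ bounds on $|\hat\nabla^l h_k|_{\hat g(t_0)}$ on $[\tau,\min\{T_k,T\})$ for every $\tau>0$, and in particular a curvature bound for $h_k(t)$ that does not blow up as $t\to T_k^-$ whenever $T_k\le T$. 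Shi's continuation criterion Theorem \ref{shishortime}(iv) then contradicts the maximality of $T_k$, forcing $T_k\ge T$.

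With $T_k\ge T$, the uniform equivalence of Step 1 holds on all of $M\times[0,T)$, and Theorem \ref{t-EvanKrylov}(i) supplies $k$-independent $C^\infty$ bounds on $h_k(t)$ on compact subsets of $M\times(0,T)$. A diagonal subsequence then converges smoothly on compacta to a smooth solution $g(t)$ of \eqref{krf} on $M\times(0,T)$. The bounds in (a) pass to the limit at once. For (b), the estimate $|\hat\nabla^l h_k|^2_{\hat g(t_0)}\le C_l/t^l$ translates, via the uniform equivalence and the two additional derivatives needed to form the curvature tensor, into $|\nabla^l\Rm(g(t))|^2_{g(t)}\le C_l/t^{l+2}$ in the limit. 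For (c), I would follow the argument in the proof of Theorem \ref{thmgboundedbelowbyboundedcurvature}: pull back by a local biholomorphism $\phi\colon B(1)\to M$, apply Lemma \ref{lemgequivalenttoboundedcurvaturelocal} with near-identity ratio produced from the uniform convergence $h_{k,0}\to g_0$ on compacta, let $k\to\infty$, and then $t\to 0$. In the smooth case, Theorem \ref{t-EvanKrylov}(ii) provides uniform $C^\infty$ bounds up to $t=0$ on compacta, so $g(t)$ extends to a smooth solution of \eqref{krf} on $M\times[0,T)$ with $g(0)=g_0$.

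The main obstacle is the lower bound $T_k\ge T$ of Step 2: Shi's existence time $T_k$ comes from the initial curvature bound of $h_{k,0}$, which is not uniform in $k$, so one cannot invoke Shi directly. The bootstrap from a purely $C^0$ equivalence to a Shi-compatible curvature estimate, using Theorem \ref{t-EvanKrylov} on unit $\hat g(t_0)$-balls, is the crux of the argument. Once this is in place, the construction of $g(t)$ and the derivative estimates follow from the standard Arzela-Ascoli plus Evans-Krylov diagonalization, and the quantitative bound in Lemma \ref{lemgequivalenttoboundedcurvaturelocal} makes the continuity at $t=0$ routine.
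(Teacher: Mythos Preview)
Your proposal is correct and follows essentially the same route as the paper: apply Lemma \ref{lemgequivalenttoboundedcurvature} to the rescaled background $C^{-1}\hat g$ to get the uniform two-sided equivalence, replace $\hat g$ by the Shi-evolved $\hat g(t_0)$ to secure bounded geometry of infinite order, then use Theorem \ref{t-EvanKrylov} both to bootstrap the curvature bound (forcing $T_k\ge T$ via Shi's continuation criterion) and to extract the smooth limit with the stated derivative estimates. The only cosmetic difference is that for (c) and the smooth extension the paper simply invokes Theorem \ref{thmgboundedbelowbyboundedcurvature} directly (its hypotheses (i)--(iii) are now verified with the uniform $T'=T$), whereas you rerun the local Lemma \ref{lemgequivalenttoboundedcurvaturelocal} argument; these amount to the same thing.
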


\begin{proof}  For each $k$, let $h_k(t)$ be the solution to \eqref{krf} with initial condition $h_k$ from Theorem
\ref{shishortime} which is defined on $M\times[0,T_k)$ for some $T_k>0$. We
first claim that there is  such that $T_k\ge T$ for all $k$, where $T=1/(2nCK)$. By Lemma
\ref{lemgequivalenttoboundedcurvature}, there is a positive continuous
function $B(t):[0, T) \to \R$ independent of $k$ such that
$$
(1/n- 2nCKt)\hat g\le h_k(t)\le B(t)\hat g
$$
in $M\times[0,\min\{T_k,T\})$. As before, we may assume that $\hat g$ has bounded geometry of
infinite order. By Theorem \ref{t-EvanKrylov}, we conclude
that if $T_k<T$, then $|\Rm(h_k(t))|_{h_k(t)}$ are bounded in
$M\times[0,T_k)$.  By Theorem \ref{shishortime}, we see that one can extend
$h_k(t)$ so that $T_k\ge T$. Given upper and lower bounds on $h_k(t)$ as above, we may conclude from  Theorem \ref{t-EvanKrylov}, as in the proof of  Theorem
\ref{thmgboundedbelowbyboundedcurvature}, that there is a smooth
solution
to the \KR flow $g(t)$ on $M\times (0,T)$ satisfying condition (a) and (c)
from which we conclude, by Theorem \ref{t-EvanKrylov} (i), that condition (b)
is also satisfied.  Also, by Theorem \ref{thmgboundedbelowbyboundedcurvature},
we have that if $g_0$ is smooth and $\{h_{k,0}\}$ converges smoothly uniformly on
compact sets, then $g(t)$ extends to a smooth solution on $M\times [0,T)$
such that $g(0)=g_0$.

\end{proof}

\begin{cor}\label{perturbpotential} Let $(M^n, \hat g)$ be a complete \K manifold with bounded curvature. Suppose $u$ is real $C^2$ function on $M$ such that $|\hat \nabla u|+|u|=o(r)$ and
$$
A^{-1}\hat g\le \hat g+\ii\ddbar u\le A\hat g
$$
for some $A>1$, where $\hat\nabla$ is the covariant derivative with respect to $\hat g$.  Then for some $T>0$, the \KR flow \eqref{krf} has a complete solution $g(t)$ on $M\times [0, T)$ with $g(0)=g_0+\sqrt{-1}\partial\bar{\p} u$ and satisfying the conclusion of Theorem \ref{thmgequivalenttoboundedcurvature}.

\end{cor}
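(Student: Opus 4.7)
The plan is to apply Theorem~\ref{thmgequivalenttoboundedcurvature} with background metric $\hat g$ and initial metric $g_0:=\hat g+\ii\ddbar u$, which is a complete continuous Hermitian metric by the hypotheses $A^{-1}\hat g\le\hat g+\ii\ddbar u\le A\hat g$ and the completeness of $\hat g$.  The task reduces to constructing a sequence $\{h_{k,0}\}$ of smooth complete \K metrics on $M$, each with bounded curvature, uniformly equivalent to $\hat g$ with constant independent of $k$, and converging uniformly on compact subsets to $g_0$.  Each $h_{k,0}$ will be of the form $\hat g+\ii\ddbar u_k$ for a smooth compactly supported function $u_k$.

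First, since $\hat g$ has bounded curvature, choose smooth cutoffs $\chi_k:M\to[0,1]$ with $\chi_k\equiv 1$ on $B_{\hat g}(p,r_k)$ and $\chi_k\equiv 0$ outside $B_{\hat g}(p,2r_k)$ for some fixed $p\in M$ and $r_k\to\infty$, satisfying $|\hat\nabla\chi_k|_{\hat g}\le C/r_k$ and $|\hat\nabla^2\chi_k|_{\hat g}\le C/r_k^2$.  Since $\chi_k u$ is $C^2$ with compact support but not smooth, mollify via partition-of-unity based local convolution in $\hat g$-charts with width $\epsilon_k\to 0$ to produce a smooth compactly supported $u_k$ with $\|u_k-\chi_k u\|_{C^2(M)}\le 1/k$.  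Setting $h_{k,0}:=\hat g+\ii\ddbar u_k$, each $h_{k,0}$ is smooth and \K by construction and agrees with $\hat g$ outside a compact set; completeness and bounded curvature follow.

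The main estimate is the uniform equivalence $C^{-1}\hat g\le h_{k,0}\le C\hat g$ with $C$ independent of $k$.  Outside the support of $u_k$ this is trivial; on the central region where $\chi_k\equiv 1$, $h_{k,0}$ differs from $g_0$ only by the mollification error, which is $O(1/k)$ in $C^0$, yielding equivalence with constant slightly worse than $A$.  On the transition annulus expand
\[
\ii\ddbar(\chi_k u)=\chi_k\,\ii\ddbar u + \ii\bigl(\partial\chi_k\wedge\bar\partial u+\partial u\wedge\bar\partial\chi_k\bigr) + u\,\ii\ddbar\chi_k.
\]
The first term contributes $\chi_k(g_0-\hat g)$, so $\hat g+\chi_k(g_0-\hat g)=(1-\chi_k)\hat g+\chi_k g_0$ is a convex combination of metrics each satisfying $A^{-1}\hat g\le\cdot\le A\hat g$.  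The remaining terms are bounded in $\hat g$-norm by
\[
C\bigl(|\hat\nabla\chi_k|_{\hat g}\,|\hat\nabla u|_{\hat g}+|u|\,|\hat\nabla^2\chi_k|_{\hat g}\bigr)\le C\bigl(o(r_k)/r_k+o(r_k)/r_k^2\bigr)=o(1)
\]
as $k\to\infty$, using the sublinear growth of $|u|$ and $|\hat\nabla u|$ on $B_{\hat g}(p,2r_k)\setminus B_{\hat g}(p,r_k)$.  Combined with the $O(1/k)$ mollification error, this ensures uniform equivalence on the annulus for all $k$ sufficiently large.

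Convergence $h_{k,0}\to g_0$ uniformly on compact subsets is immediate: on any fixed compact $K\subset M$, $\chi_k\equiv 1$ on $K$ for $k$ large, and the mollification gives $u_k\to u$ in $C^2(K)$, hence $\ii\ddbar u_k\to\ii\ddbar u$ uniformly on $K$.  Theorem~\ref{thmgequivalenttoboundedcurvature} then yields the desired complete smooth solution $g(t)$ on $M\times(0,T)$ extending continuously to $M\times[0,T)$ with $g(0)=g_0$ and with bounded curvature for $t>0$.  The main obstacle is controlling the transition annulus, where the sublinear growth rates of $|u|$ and $|\hat\nabla u|$ must precisely offset the cutoff derivative bounds $1/r_k$ and $1/r_k^2$ to produce an $o(1)$ error independent of $k$; this is exactly what the hypothesis $|\hat\nabla u|+|u|=o(r)$ provides.
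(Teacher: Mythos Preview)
Your approach is essentially the same as the paper's: cut off $u$ by $\chi_k$, smooth the result, observe that the cross terms $\partial\chi_k\wedge\bar\partial u+\partial u\wedge\bar\partial\chi_k$ and $u\,\ii\ddbar\chi_k$ are $o(1)$ on the transition annulus by the sublinear-growth hypothesis, and invoke Theorem~\ref{thmgequivalenttoboundedcurvature}.  The paper organises the mollification slightly differently (first approximate $u$ by smooth $u_j$ in $C^2$ on compacta, then cut off $\eta_k u_j$), but this is cosmetic.

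There is one technical slip.  You assert cutoffs with $|\hat\nabla^2\chi_k|_{\hat g}\le C/r_k^2$, but on a general complete manifold with bounded curvature this is not available: writing $\chi_k=\phi(d/r_k)$ with $d$ the (smoothed) distance, the term $\phi'(d/r_k)\,\hat\nabla^2 d/r_k$ is only $O(1/r_k)$ because $\hat\nabla^2 d$ is merely bounded, not decaying (think of hyperbolic space).  The paper accordingly only claims $|\hat\nabla^2\eta_k|\le c_2/k$.  Fortunately your argument survives unchanged with this weaker bound, since $|u|\cdot|\hat\nabla^2\chi_k|\le o(r_k)/r_k=o(1)$ is still what you need; just correct the stated estimate.
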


(Here $f=o(r^k)$ represents a positive function on $M$ such that $f(x)/d_p^k (x)$ approaches $0$ as $d_p(x)\to \infty$ where $d(x)$ is the distance function from some fixed point in $M$  relative to $\hat{g}$).

\begin{proof} Let $d(x)$ be the distance with respect to  $\hat g$ from $x$ to a fixed point $p\in M$.  As in the proof of Lemma \ref{lemgboundedbelowboundedcurvature}, we may assume without loss of generality that $d(x)$ is smooth with $|\hat\nabla d|, |\hat \nabla^2d|$ bounded on $M$.  Let $\phi$ be a smooth function on $\R$ such that $0\le \phi\le 1$, $\phi(s)=1$ for $s\le 1$ and $\phi(s)=0$ for $s\ge 2$, and $|\phi'|+|\phi''|\le c_1$ for some $c_1$. For any $k\ge 1$, let $\eta_k(x)=\phi(d(x)/k)$. Then $|\hat\nabla \eta_k|,
|\hat\nabla^2\eta_k|\le c_2/k$ on $M$ for some constant $c_2$ independent of $k$.  Now let $\{u_k\}$ be a sequence of smooth functions on $M$ which converging to $u$, uniformly on compact subsets of $M$  in the $C^2$ norm.  For each $k$ we have
\be\label{eq-example-1}
\begin{split}
 \ddbar (\eta_ku_j)=&\eta_k\ddbar u_j+u_j\ddbar \eta_k+\p u_j\wedge\bar\p \eta_k+\p\eta_k\wedge\bar\p u_j\\
 &\to \eta_k\ddbar u+u\ddbar \eta_k+\p u \wedge\bar\p \eta_k+\p\eta_k\wedge\bar\p u
\end{split}
\ee
uniformly on $M$ as $j\to\infty$. Since $\p\eta_k$ and $\p\bar\p \eta_k$ vanish on $B(k)$ and outside $B(2k)$,  and $|\hat \nabla u|+|u|=o(r)$,   for any $\e>0$ we have $|\p u_j\wedge \bar{\p}  \eta_k|_{\hat{g}}+|u_j\ddbar \eta_k|_{\hat{g}}\le \e$ if $j$ is large enough. Hence, for any $k$, we can find $u_{j_k}$ with $j_k\to\infty$ as $k\to\infty$ such that
$h_k=\hat g+\ii\ddbar (\eta_ku_{j_k})$ is a \K metric such that
$$
(2A)^{-1}\hat g\le h_k\le 2A\hat g.
$$

In particular,   $h_k$ is complete, outside a compact set $h_k=\hat g$ and thus has bounded curvature, and $h_k$ converges to $g_0$ uniformly on compact sets in $C^0$.  The corollary now follows from Theorem \ref{thmgequivalenttoboundedcurvature}

\end{proof}

\begin{rem} Note that if $|\hat{\nabla} u|=o(1)$, then $|u|=o(r)$. This will imply that $|\hat{\nabla}  u|+|u|=o(r)$.  Also from the proof of the theorem, if  $\hat g$ has bounded curvature and $u$ is a smooth function on $M$ such that $\p\bar\p u$, $u$ and $\hat{\nabla}  u$ are bounded, then for $\e>0$ small enough, the \KR flow with initial condition $\hat g+\e\ii\p\bar\p u$ has a short time solution.

\end{rem}

By Theorem \ref{thmgequivalenttoboundedcurvature}, one may obtain  estimates for maximal time interval of existence of the \KR flow constructed in  Theorem \ref{shishortime}.

\begin{cor}\label{existencetimeestimate-2}
 Let $M^n$ be a complex noncompact manifold and let $g_0$, $\hg$ be complete \K metrics with  bounded curvature on $M$. Suppose the holomorphic bisectional curvature of $\hg$ is  bounded above by $K$ and that $\hg\le g_0\le C\hg$ for some $C\ge1$.  Let $T=1/2nK$ if $K>0$, otherwise let $T=\infty$.  Then the \KR flow \eqref{krf} has a complete smooth solution $g(t)$ on $M\times[0,T)$ with $g(0)=g_0$ such that for all $t\in [0, T)$, $g(t)$ has bounded curvature and
 \be
  \lf(1/n-2Kt\ri)\hg\le g(t).
 \ee
     In particular, the \KR flow has a long time solution if the initial condition is a complete \K metric with non-positive and bounded holomorphic bisectional curvatures.
\end{cor}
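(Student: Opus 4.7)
My plan is to run the \KRF from $g_0$ using Theorem \ref{shishortime} and then combine Lemma \ref{lemgequivalenttoboundedcurvature} with Theorem \ref{t-EvanKrylov} to rule out early termination before time $T$. Since $g_0$ is a complete \K metric with bounded curvature, Shi's Theorem \ref{shishortime} provides a smooth maximal solution $g(t)$ of \eqref{krf} on $[0, T_{\max})$ with uniformly bounded curvature on $M\times[0, T']$ for every $T'<T_{\max}$, which automatically supplies the ``bounded curvature'' part of the conclusion. So it suffices to prove the lower bound and the inequality $T_{\max}\ge T$. The lower bound is immediate: because $g_0\ge \hat g$, applying Lemma \ref{lemgequivalenttoboundedcurvature}(i) to $h(t)=g(t)$ yields $(1/n-2Kt)\hat g\le g(t)$ on $M\times[0,\min\{T_{\max},T\})$. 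Combining the hypothesis $g_0\le C\hat g$ with Lemma \ref{lemgequivalenttoboundedcurvature}(ii) supplies a matching upper bound $g(t)\le (1+w(t))\hat g$ on the same interval, with $w$ continuous on $[0,T)$; hence $g(t)$ is uniformly equivalent to $\hat g$ on every $M\times[0, T']$ with $T'<T$.

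To see $T_{\max}\ge T$, I assume instead $T_{\max}<T$. Since $\hat g$ only has bounded curvature, I first flow it briefly under \KRF via Shi's theorem to obtain, for small $s_0>0$, a metric $\hat g(s_0)$ with bounded geometry of infinite order and uniformly close to $\hat g$ (the same maneuver used in the proofs of Theorems \ref{thmgboundedbelowbyboundedcurvature} and \ref{thmgequivalenttoboundedcurvature}); then $g(t)$ remains uniformly equivalent to $\hat g(s_0)$ on $M\times[0, T_{\max})$. Applying Theorem \ref{t-EvanKrylov}(i) with background $\hat g(s_0)$ produces estimates $|\hat\nabla^k g(t)|^2_{\hat g(s_0)}\le C_k t^{-k}$ on $B_{\hat g(s_0)}(x,1/2)\times(0,T_{\max})$ for constants $C_k$ independent of $x\in M$. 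Restricting to $t\in[T_{\max}/2, T_{\max})$ gives a uniform bound on $|\Rm(g(t))|_{g(t)}$ there, which combined with Shi's uniform bound on $[0, T_{\max}/2]$ yields $\sup_{M\times[0, T_{\max})}|\Rm(g(t))|<\infty$. Theorem \ref{shishortime}(iv) then extends $g(t)$ past $T_{\max}$, contradicting maximality.

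Finally, the ``in particular'' statement follows by choosing $\hat g = g_0$ (so $C=1$); if $g_0$ has non-positive holomorphic bisectional curvature, one may take the upper bound $K=0$, so $T=\infty$ and the flow exists for all time. The main technical subtlety I anticipate is the fact that $\hat g$ is merely assumed to have bounded curvature rather than bounded geometry of infinite order, so Theorem \ref{t-EvanKrylov} cannot be invoked directly; the brief evolution $\hat g\mapsto \hat g(s_0)$ sidesteps this, and no other real obstacle is expected.
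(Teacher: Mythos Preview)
Your argument is correct and is essentially the paper's intended proof: the paper presents this as a corollary of Theorem \ref{thmgequivalenttoboundedcurvature} with the constant sequence $h_{k,0}=g_0$, and what you have written is exactly the proof of that theorem specialized to this case (no limiting sequence is needed since $g_0$ already has bounded curvature). The only remark is that the sharper existence time $T=1/(2nK)$, rather than the $T=1/(2nCK)$ appearing in Theorem \ref{thmgequivalenttoboundedcurvature}, comes precisely from the stronger hypothesis $g_0\ge \hat g$ (as opposed to $g_0\ge C^{-1}\hat g$) when invoking Lemma \ref{lemgequivalenttoboundedcurvature}(i); you use this correctly.
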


\section{\K Ricci flow of U(n) invariant metrics on $\C^n$}\label{WZ}
In this section we apply Theorems
\ref{thmgboundedbelowbyboundedcurvature}, \ref{thmgequivalenttoboundedcurvature} to $U(n)$ invariant metrics on $\C^n$.

\subsection{Wu-Zheng's construction}
  We recall Wu-Zheng's construction in \cite{WZ} of smooth $U(n)$ invariant
metrics on $\C^n$.  Begin with a smooth function $\xi: [0, \infty)\to \R$ with
$\xi(0)=0$, and define functions $h, f : [0, \infty) \to \infty$ by

 \begin{equation}\label{hf}
 h(r):=Ce^{{\int^r_{0}-\frac{\xi(t)}{t}dt}}; \hspace{10pt}
f(r):=\frac{1}{r}\int^r_{0}h(t)dt
 \end{equation}
  where $h(0)=C>0$ and $f(0)=h(0)$.

Now define a $U(n)$ invariant metric $g$ on $\C^n$ by
 \begin{equation}\label{g}
 g_{i\bar{j}}=f(r)\delta_{ij}+f'(r)\overline{z_i}z_j.
\end{equation}
where $g_{i\bar{j}}$ are the components of $g$ in the standard coordinates
$z=(z_1,\dots,z_n)$ on $\mathbb{C}^n$  and $r=|z|^2$.  Notice that a different
choice of $h(0)$ simply corresponds to scaling the metric $g$ above. In the
following, we always take $C=1$, i.e. $h(0)=1$. Wu-Zheng \cite{WZ} proved

\begin{thm}\label{thmcurvature}[Wu-Zheng]

\begin{enumerate}
\item [1.] The metric $g$ above is complete iff
\begin{equation}\label{completeness}
f>0, \hspace{12pt} h>0, \hspace{12pt}
\int_0^{\infty}\frac{\sqrt{h}}{\sqrt{t}}dt=\infty \hspace{12pt}
\end{equation}
Conversely, up to scaling by a constant factor, every complete smooth $U(n)$
invariant \K metric on $\C^n$ can be generated in this way.
\item[2.]
At the point $z=(z_1,0,\dots,0)$, relative to the orthonormal frame $\{
e_1=\frac{1}{\sqrt{h}}\p_{z_1}, e_2=\frac{1}{\sqrt{f}}\p_{z_2},\dots,
\frac{1}{\sqrt{f}}\p_{z_n} \}$ with respect to $g_{i\jbar}$, we have
\begin{enumerate}
\item[(i)] $\displaystyle A=R_{1\bar{1}1\bar{1}}=\frac{\xi'}{h}$,
\item[(ii)] $\displaystyle B=R_{1\bar{1}i\bar{i}}=\frac{1}{(rf(r))^2}\int_0^r \xi'(t) \lf(\int_0^t
h(s)ds\ri) dt $,
\item[(iii)]
$\displaystyle C=R_{i\bar{i}i\bar{i}}=2R_{i\bar{i}j\bar{j}}=\frac{2}{(rf(r))^2}\int_0^r h(t)\xi(t)dt$,
\end{enumerate}
where $2\leq i\neq j\leq n$ and these are the only non-zero components of the
curvature tensor at $z$ except those obtained from $A, B$ or $C$ by the symmetric properties of $R$.
\end{enumerate}
\end{thm}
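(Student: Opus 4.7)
My plan is to break the theorem into the structural/completeness part (statement 1) and the curvature part (statement 2), and for each use the fact that $U(n)$-invariance reduces every pointwise calculation to the slice $z = (z_1,0,\ldots,0)$ where the matrix $g_{i\bar j}$ is diagonal. Before tackling either part I would first observe that the prescription $g_{i\bar j} = f\delta_{ij} + f'\bar z_i z_j$ is exactly $\partial_i\partial_{\bar j}\Phi$ with $\Phi'(r)=f(r)$, so $g$ is automatically K\"ahler whenever it is positive definite. The relation $(rf)'=f+rf'=:h$ is then an identity, and defining $\xi := -r(\log h)'$ recasts this identity as $h(r) = h(0)\exp(-\int_0^r \xi(t)/t\,dt)$, which is the formula \eqref{hf}. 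Running these identifications in reverse gives a bijection between triples $(f,h,\xi)$ satisfying \eqref{hf} and smooth radial K\"ahler potentials on $\mathbb C^n$.

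For statement 1, I would evaluate $g$ at $(z_1,0,\ldots,0)$ and read off that it is diagonal with eigenvalues $g_{1\bar1}=f+rf'=h$ and $g_{i\bar i}=f$ for $i\ge2$, so positive-definiteness everywhere on $\mathbb C^n$ is equivalent to $f,h>0$ on $[0,\infty)$. For completeness, the radial curve $z_1(\rho)=\rho$, $z_i=0$ (for $i\ge 2$) has Riemannian length element $\sqrt{2h(\rho^2)}\,d\rho$, and the substitution $t=\rho^2$ turns the total length into $\frac{1}{\sqrt2}\int_0^\infty \sqrt{h(t)/t}\,dt$; by $U(n)$-equivariance any divergent curve can be shortened to a radial one, so divergence of this integral is equivalent to completeness. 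For the converse, any complete smooth $U(n)$-invariant K\"ahler metric has a smooth $U(n)$-invariant K\"ahler potential (after averaging), hence one of the form $\Phi(r)$, and then $f:=\Phi'$, $h:=(rf)'$, $\xi:=-rh'/h$ recover the ingredients of Wu--Zheng's recipe up to the scaling constant $h(0)$.

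For statement 2 I would plug $g_{i\bar j}=f\delta_{ij}+f'\bar z_i z_j$ into the K\"ahler curvature formula
\begin{equation*}
R_{i\bar j k\bar l} = -\partial_k\partial_{\bar l}g_{i\bar j} + g^{p\bar q}\,\partial_k g_{i\bar q}\,\partial_{\bar l} g_{p\bar j},
\end{equation*}
evaluate at $z_0=(z_1,0,\ldots,0)$, and use that $\bar z_i,z_i$ vanish at $z_0$ for $i\ge 2$ to kill the vast majority of terms. A direct but lengthy computation gives, at $z_0$,
\begin{equation*}
R_{1\bar11\bar1} = h\xi',\qquad R_{i\bar ij\bar j} = -f',\qquad R_{i\bar ii\bar i}=-2f',\qquad R_{1\bar1i\bar i} = -(rf')' + r(f')^2/f
\end{equation*}
for $i\neq j\ge 2$ (coordinate components), while all other components at $z_0$ not obtainable from these by K\"ahler symmetries vanish. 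Dividing by the appropriate powers of $h$ and $f$ to pass to the orthonormal frame then yields formulas (i), (ii), (iii) once one rewrites the right-hand sides using the two key integral identities $\int_0^t h(s)ds = tf(t)$ and $\int_0^r h(t)\xi(t)\,dt = r(f-h) = -r^2 f'$ (the latter proved by the integration by parts $\int_0^r th'(t)dt = rh(r) - rf(r)$ together with $h\xi = -th'$).

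The main obstacle is purely computational: keeping track of the many $\partial_k g_{i\bar q}\,\partial_{\bar l}g_{p\bar j}$ cross terms and then repackaging the answer in the integral form stated in (ii)--(iii). For (ii) in particular, the identity $\frac{1}{hf}[-(rf')'+r(f')^2/f] = \frac{1}{(rf)^2}\int_0^r \xi'(t)\,tf(t)\,dt$ is what requires the integration by parts described above, using $(tf)'=h$ and $\xi= -rh'/h$. Once this bookkeeping is done the identities fall out cleanly, and every other component at $z_0$ is seen to vanish by direct inspection.
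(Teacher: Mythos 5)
The paper does not prove this theorem; it is stated as a result of Wu--Zheng (with the labeling ``[Wu-Zheng]'' and the citation \cite{WZ}), so there is no internal proof to compare against. Your proposal is an independent derivation, and it is essentially correct. A few remarks on the details you leave implicit.

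Your structural observations are right: writing $g_{i\bar j}=\partial_i\partial_{\bar j}\Phi$ with $\Phi'=f$, noting $(rf)'=h$, and inverting via $\xi=-rh'/h$ is exactly the bijection underlying Wu--Zheng's construction, and the completeness criterion reduces to the divergence of the radial length integral by the usual projection argument for rotationally symmetric metrics (the radial curve is length-minimizing to each sphere $|z|=\rho$, so completeness is equivalent to the radial distance from the origin being infinite).

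For the curvature part, the raw coordinate components you assert check out. Using
$R_{i\bar j k\bar l}=-\partial_k\partial_{\bar l}g_{i\bar j}+g^{p\bar q}\partial_k g_{i\bar q}\partial_{\bar l}g_{p\bar j}$
and evaluating at $z_0=(z_1,0,\dots,0)$ (where $g$ is diagonal with $g_{1\bar1}=h$, $g_{i\bar i}=f$ for $i\ge2$), one finds
\begin{equation*}
R_{1\bar11\bar1}=-(h'+rh'')+\frac{r(h')^2}{h}=h\xi',\qquad
R_{1\bar1i\bar i}=-(rf')'+\frac{r(f')^2}{f},\qquad
R_{i\bar ij\bar j}=-f',\qquad
R_{i\bar ii\bar i}=-2f',
\end{equation*}
all matching your claims, with the first equality following from $\xi'=-h'/h-rh''/h+r(h')^2/h^2$. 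Your two repackaging identities also hold: $\int_0^r h\xi\,dt=-\int_0^r th'\,dt=r(f-h)=-r^2f'$ gives (iii), and for (ii), setting $\phi=rf$ and using $f'=(h-f)/r$ together with $h'=-\xi h/r$, the quantity $\dfrac{(rf)^2}{hf}\bigl[-(rf')'+r(f')^2/f\bigr]$ simplifies to $\phi\xi+rh-\phi$, whose $r$-derivative is $\phi\xi'=rf\xi'$, which agrees with $\frac{d}{dr}\int_0^r\xi' tf\,dt$ and vanishes at $r=0$. The claim that the remaining components at $z_0$ vanish follows from the $U(n-1)$ isotropy at $z_0$ together with inspecting the few index patterns this doesn't immediately kill; you assert this without detail, but it is a routine check in the same vein as the ones above.

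The one point I would press you to tighten is the converse direction of (1): you need that a $U(n)$-invariant K\"ahler form on $\C^n$ admits a $U(n)$-invariant potential. Existence of some potential uses $H^{1,1}_{\bar\partial}(\C^n)=0$; averaging it over $U(n)$ then works because $\partial\bar\partial$ commutes with pullback by the group action and the group is compact. This is what you mean by ``after averaging,'' but it is worth saying explicitly since it is the only step that uses global topology of $\C^n$. With that spelled out, the argument is complete and matches the standard route Wu--Zheng take in \cite{WZ}.
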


  In this section, $C$ always denotes the quantity in the above theorem.

By the above construction, Wu-Zheng \cite{WZ} proved the correspondence below
for positively curved metrics, while Yang later showed in \cite{Y} that this
extends to a correspondence for non-negatively curved metrics.

\begin{thm}\label{thmwz}[Wu-Zheng, Yang] There is a one to
one correspondence between the set of all smooth complete $U(n)$ invariant \K
metrics on $\C^n$ with non-negative holomorphic bisectional curvature (modulo
scaling by a constant factor)  and the set of all smooth function $\xi:[0,
\infty)\to \R$ satisfying
\begin{equation} \xi(0)=0, \hspace{12pt} \xi'\geq 0,
\hspace{12pt} \xi \leq 1\end{equation}
\end{thm}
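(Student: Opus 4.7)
The plan is to use Theorem \ref{thmcurvature}, which already establishes a bijection (up to scaling) between complete smooth $U(n)$-invariant \K metrics on $\C^n$ and smooth functions $\xi:[0,\infty)\to\R$ with $\xi(0)=0$ satisfying the completeness conditions \eqref{completeness} on the associated $h$, $f$. It therefore suffices to identify, among such $\xi$, those whose metric has non-negative holomorphic bisectional curvature, and to show these are precisely the $\xi$ with $\xi'\ge 0$ and $\xi\le 1$.

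First I would handle the curvature condition. By part 2 of Theorem \ref{thmcurvature}, at the point $(z_1,0,\ldots,0)$ we have $A=\xi'/h$, so since $h>0$ the non-negativity of $A$ is equivalent to $\xi'\ge 0$. Once $\xi'\ge 0$ holds, $\xi(0)=0$ forces $\xi\ge 0$ throughout, and non-negativity of $B$, $C$ then follows from the integral formulas in that theorem together with $f>0$. To upgrade non-negativity of $A$, $B$, $C$ to full non-negative bisectional curvature, I would use $U(n)$-invariance to reduce to a point on the $z_1$-axis; the residual stabilizer $U(1)\times U(n-1)$ annihilates all curvature components there except those listed in part 2 of Theorem \ref{thmcurvature}, together with Kahler-symmetric rearrangements equal to $B$ (e.g.\ $R_{1\bar m m\bar 1}=R_{m\bar m 1\bar 1}=B$). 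Expanding $X=\sum x_i e_i$, $Y=\sum y_j e_j$ in the orthonormal frame, a direct computation yields
\bee
R(X,\bar X,Y,\bar Y)=|x_1|^2|y_1|^2 A + B\bigl[|x_1|^2 |Y'|^2 + |y_1|^2|X'|^2 + 2\re\bigl(x_1\bar y_1 \langle Y',X'\rangle\bigr)\bigr]+\frac{C}{2}\bigl[|X'|^2|Y'|^2+|\langle X',Y'\rangle|^2\bigr],
\eee
with $X'$, $Y'$ the components in $\mathrm{span}(e_2,\ldots,e_n)$. The bracket multiplied by $B$ dominates $(|x_1||Y'|-|y_1||X'|)^2\ge 0$ by Cauchy-Schwarz, so $A, B, C\ge 0$ gives the full non-negativity. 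Conversely, $X=Y=e_1$ shows $A\ge 0$ is forced by non-negative bisectional curvature. Hence the curvature hypothesis is equivalent to $\xi'\ge 0$.

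The condition $\xi\le 1$ is not pointwise but emerges from completeness combined with monotonicity. Assuming $\xi'\ge 0$, if $\xi(r_0)>1$ then $\xi(t)\ge\xi(r_0)>1$ for $t\ge r_0$, hence $\int_{r_0}^r \xi(t)/t\,dt\ge\xi(r_0)\log(r/r_0)$ and $h(r)\le C r^{-\xi(r_0)}$ for large $r$ with exponent exceeding $1$; then $\int^{\infty}\sqrt{h(t)/t}\,dt<\infty$, contradicting the completeness criterion \eqref{completeness}. Conversely, $\xi\le 1$ gives $\int_0^r\xi/t\,dt\le\log r+O(1)$, so $h(r)\ge c/r$ for large $r$ and $\int^{\infty}\sqrt{h/t}\,dt=\infty$. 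Smoothness of the metric on all of $\C^n$ follows because $\xi(0)=0$ makes $\xi(t)/t=\int_0^1\xi'(ts)\,ds$ smooth at $t=0$, so $h$ and $f$ are smooth in $r=|z|^2$. The main obstacle I anticipate is this last step: the bound $\xi\le 1$ is not a direct curvature condition but a global consequence of combining monotonicity (coming from $A\ge 0$) with the radial completeness criterion in \eqref{completeness}; once this bridging observation is identified, every implication reduces to a routine calculation using Theorem \ref{thmcurvature}.
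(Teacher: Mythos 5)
The paper does not prove Theorem \ref{thmwz}; it records it as a cited result of Wu--Zheng \cite{WZ} (for the positively curved case) and Yang \cite{Y} (for the non-negative extension), and the only in-text commentary is Remark \ref{rem-nonpositive}, which points out that the implication ``non-negative holomorphic bisectional curvature $\Rightarrow \xi'\ge 0$'' is immediate from $A=\xi'/h$ in Theorem \ref{thmcurvature}~2(i). Your write-up is therefore a reconstruction of the cited proof rather than something to compare against a proof in this paper, and as such it is essentially correct and follows the expected route through Theorem \ref{thmcurvature}. The two ingredients you isolate are exactly the right ones: (a) at a point on the $z_1$-axis the $U(1)\times U(n-1)$ stabilizer kills all curvature components except $A$, $B$, $C$ and their Kähler-symmetric rearrangements, so expanding $R(X,\bar X,Y,\bar Y)$ in the frame $\{e_i\}$ and using $|\langle X',Y'\rangle|\le |X'||Y'|$ shows that $A,B,C\ge 0$ upgrades to full non-negative bisectional curvature, with the converse supplied by $X=Y=e_1$; and (b) $\xi\le 1$ is not a pointwise curvature condition but comes from combining $\xi'\ge 0$ with the radial completeness criterion $\int^\infty \sqrt{h/t}\,dt=\infty$ in \eqref{completeness}, since $\xi(r_0)>1$ would force $h(r)\lesssim r^{-\xi(r_0)}$ and hence finiteness of that integral. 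Your chain $A\ge 0\Rightarrow\xi'\ge 0\Rightarrow\xi\ge 0\Rightarrow B,C\ge 0$ is a clean way to avoid arguing non-negativity of $B$, $C$ separately. Minor points worth tightening if this were to be written out in full: you should invoke Theorem \ref{thmcurvature}~1 explicitly for the bijection between smooth complete $U(n)$-invariant metrics (mod scaling) and admissible $\xi$, and you should address the origin $z=0$ (where the frame in Theorem \ref{thmcurvature}~2 degenerates) either by continuity of the curvature tensor or by noting that $U(n)$-invariance handles all points with $z\ne 0$ and curvature non-negativity is a closed condition.
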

\begin{rem}\label{rem-nonpositive}
One direction of the above correspondence is immediately obvious from Theorem \ref{thmcurvature} 2 (i).  In particular, it is obvious that if $g$ has non-negative (non-positive) holomorphic bisectional curvature then $\xi' \geq 0$ ($\xi'\leq 0$).
%By the proof of \cite{WZ,Y}, it is easy to see that the theorem is still true if non-negativity of holomorphic %bisectional curvature is replaced by non-positivity of holomorphic bisectional curvature.

\end{rem}

\subsection{Applications of Theorems
\ref{thmgboundedbelowbyboundedcurvature} and \ref{thmgequivalenttoboundedcurvature} to $U(n)$ invariant metrics}
We now apply Theorems \ref{thmgboundedbelowbyboundedcurvature} and \ref{thmgequivalenttoboundedcurvature} to $U(n)$ invariant metrics. First we
have the following lemma.
\begin{lem}\label{lem-curvaturedecay} Let $g$ be a complete $U(n)$ invariant \K metric on $\C^n$ generated by $\xi$. \begin{enumerate}
                      \item [(i)] If $\lf|\frac{\xi'}{h}\ri|$ is uniformly bounded, then the curvature of $g$ is uniformly bounded.
                      \item [(ii)] If $\lim_{r\to\infty}\lf|\frac{\xi'(r)}{h(r)}\ri|=0$, and $\lim_{r\to\infty}rf(r)=\infty$ then the curvature of $g$ approaches to zero near infinity.
                    \end{enumerate}

\end{lem}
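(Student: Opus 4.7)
My plan is to apply the explicit curvature formulas in Theorem~\ref{thmcurvature}(2). By $U(n)$-symmetry, every point of $\C^n$ can be moved to the form $(z_1,0,\ldots,0)$, so bounds on the three scalars $A=\xi'/h$, $B=H(r)^{-2}\int_0^r \xi'(t)H(t)\,dt$, and $C=2H(r)^{-2}\int_0^r h(t)\xi(t)\,dt$, where $H(r):=rf(r)=\int_0^r h(s)\,ds$, automatically control the full curvature tensor $|\Rm|_g$.  Note the identity $H'=h$ is what makes all the integrals tractable.

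For part (i), let $M:=\sup_{t\ge 0}|\xi'(t)/h(t)|$; then $|A|\le M$ directly, while $|\xi'(t)|\le Mh(t)$ integrates to $|\xi(t)|\le MH(t)$. Using the identity $h(t)H(t)=\tfrac12(H^2)'(t)$,
\begin{equation*}
|B|\le \frac{M}{H(r)^2}\int_0^r h(t)H(t)\,dt=\frac{M}{2},
\end{equation*}
and the same calculation gives $|C|\le M$, proving uniform boundedness of the curvature of $g$.

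For part (ii), fix $\epsilon>0$ and choose $R_0$ with $|\xi'/h|<\epsilon$ on $[R_0,\infty)$, so that $|\xi(t)|\le |\xi(R_0)|+\epsilon\bigl(H(t)-H(R_0)\bigr)$ for $t\ge R_0$. Splitting each of the integrals defining $B$ and $C$ as $\int_0^{R_0}+\int_{R_0}^r$, the piece over $[0,R_0]$ is a fixed constant and contributes $O(1/H(r)^2)$ after the $1/H(r)^2$ prefactor, while the tail contributes at most $\epsilon/2$ (plus an additional $O(1/H(r))$ coming from the $|\xi(R_0)|$ term in the bound on $|\xi|$) by the same $hH=\tfrac12(H^2)'$ collapse as in part (i). Since $H(r)=rf(r)\to\infty$ by hypothesis, each of $|A|,|B|,|C|$ is bounded by a fixed multiple of $\epsilon$ plus $o(1)$ as $r\to\infty$, and sending $\epsilon\to 0$ concludes the proof.

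I do not anticipate any serious obstacle: the core computation is the identity $\int_0^r hH\,dt=H(r)^2/2$, which is immediate from $H'=h$, and the rest is a routine $\epsilon$-$R_0$ split that uses the hypothesis $rf(r)\to\infty$ to dispose of the fixed-constant remainders.
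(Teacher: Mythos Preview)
Your proof is correct and follows essentially the same approach as the paper: both arguments reduce to bounding $A$, $B$, $C$ via $|\xi'|\le Mh$ (hence $|\xi|\le MH$) for part (i), and via an $\epsilon$--$R_0$ split together with $H(r)=rf(r)\to\infty$ for part (ii). Your use of the identity $\int_0^r hH\,dt=\tfrac12 H(r)^2$ is a mild sharpening of the paper's cruder estimate $\int_0^r hH\,dt\le H(r)^2$, but the structure of the argument is the same.
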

\begin{proof} (i) It is sufficient to prove that the holomorphic bisectional curvature is uniformly bounded under the assumption that $\lf|\frac{\xi'}{h}\ri| $ is uniformly bounded by $c$, say. By Theorem \ref{thmcurvature}, in the notations of the theorem it is sufficient to prove that $|A|, |B|, |C|$ are uniformly bounded. It is obviously $|A|\le c$. Now
\bee
\begin{split}
|B|\le &\frac1{r^2f^2}\int_0^r ch(t)dt\lf(\int_0^th(s) ds\ri)dt\\
\le&\frac c{r^2f^2}\lf(\int_0^r  h(t)dt\ri)^2\\
=&c
\end{split}
\eee
because $h>0$ and $rf(r)=\int_0^rh(t)dt$. Since
$$
|\xi(r)|\le \int_0^r|\xi'(t)|dt\le c\int_0^rh(t) dt,
$$
we have
$$
|C|\le 2c.
$$

(ii) If  $\lim_{r\to\infty}\lf|\frac{\xi'(r)}{h(r)}\ri|=0$, then $\lim_{r\to\infty}A=0$.
On the other hand, for any $\e>0$, there is $r_0$ such that $\lf|\frac{\xi'(r)}{h(r)}\ri|\le\e$ for $r\ge r_0$. Then
\bee
\begin{split}
|B|\le &\frac1{r^2f^2}\int_0^{r_0} |\xi'|(t) \lf(\int_0^th(s) ds\ri)dt+\e\\
\end{split}
\eee
Since $rf(r)\to\infty$ as $r\to\infty$, it is easy to see that $\lim_{r\to\infty}|B|=0$. Also
$$
|\xi|(r)\le \int_0^{r_0}|\xi'|(t)dt+\e\int_{r_0}^rh(t)dt
$$
if $r\ge r_0$. Hence
\bee
\begin{split}
|C|\le &\frac1{rf}\int_0^{r_0} |\xi'|(t) dt+\e,
\end{split}
\eee
and one can conclude that $\lim_{r\to\infty}|C|=0.$ From these (ii) follows.
\end{proof}
\begin{lem}\label{ctredecay} Let $\xi:[0,\infty)$ be a smooth function with
$\xi(0)=0$. Suppose $\xi(r)=a$ for some constant $a\le 1$ for all $r\ge r_0$.
Then $\xi$ generates a complete $U(n)$ invariant metric $g$ such that the
curvature of $g$ approaches $0$  as $x\to \infty$ on $\C^n$.
    \end{lem}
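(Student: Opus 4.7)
The plan is to verify the hypotheses of Lemma \ref{lem-curvaturedecay}(ii) for the metric generated by $\xi$, namely completeness, $\xi'/h\to 0$, and $rf(r)\to\infty$. The key computation is an explicit formula for $h(r)$ on $[r_0,\infty)$: since $\xi(t)=a$ for $t\ge r_0$, the defining identity \eqref{hf} gives, for $r\ge r_0$,
\begin{equation*}
h(r)=h(r_0)\exp\!\left(-\int_{r_0}^r\frac{a}{t}\,dt\right)=h(r_0)\left(\frac{r_0}{r}\right)^{a}=c_0\,r^{-a}
\end{equation*}
for some constant $c_0>0$. Note also that $\xi(t)/t$ extends smoothly to $t=0$ because $\xi(0)=0$ and $\xi$ is smooth, so $h$ is smooth and strictly positive on $[0,\infty)$.

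With the asymptotic $h(r)\sim c_0 r^{-a}$ in hand I will verify the Wu--Zheng completeness criterion \eqref{completeness}. Positivity of $h$ and $f$ is immediate. For the integral condition, $\sqrt{h(t)/t}\sim \sqrt{c_0}\,t^{-(a+1)/2}$ near infinity, and since $a\le 1$ we have $(a+1)/2\le 1$, so $\int^\infty \sqrt{h/t}\,dt=\infty$. Thus $g$ is complete. Next, $rf(r)=\int_0^r h(t)\,dt$, which for $a<1$ behaves like $\int^r t^{-a}dt\sim r^{1-a}/(1-a)\to\infty$, and for $a=1$ behaves like $\log r\to\infty$; in either case $rf(r)\to\infty$. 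Finally, since $\xi$ is constant on $[r_0,\infty)$, $\xi'(r)=0$ there, so $\xi'(r)/h(r)=0$ for $r\ge r_0$ and in particular tends to $0$ as $r\to\infty$.

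Having established these three facts, Lemma \ref{lem-curvaturedecay}(ii) applies and shows the curvature of $g$ tends to $0$ at infinity, completing the proof. I do not anticipate a real obstacle: the only slightly delicate point is the borderline case $a=1$, where the completeness integral $\int^\infty t^{-(a+1)/2}dt$ becomes logarithmically divergent rather than power-law divergent, and one should check this case explicitly rather than rely on a strict inequality. Everything else reduces to the elementary asymptotic estimate for $h$.
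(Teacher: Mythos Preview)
Your proof is correct and follows essentially the same route as the paper: compute $h(r)=c_0 r^{-a}$ for $r\ge r_0$, verify Wu--Zheng's completeness criterion using $a\le 1$, observe $\xi'=0$ near infinity, check $rf(r)\to\infty$, and invoke Lemma~\ref{lem-curvaturedecay}(ii). The only cosmetic difference is that the paper handles $rf(r)\to\infty$ with a single logarithmic lower bound valid for all $a\le 1$, whereas you split into the cases $a<1$ and $a=1$.
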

    \begin{proof} For $r\ge r_0$,
    $$
    \int_0^r\frac{\xi(t)}tdt= \int_0^{r_0}\frac{\xi(t)}tdt+a\log(\frac r{r_0}).
    $$
    Hence
    $
    h(r)=c_1r^{-a}
    $
    for some constant $c_1>0$ for all $r\ge r_0$. Since $a\le 1$, it is easy to
see that
    $$
    \int_0^\infty\frac{\sqrt h(r)}{\sqrt r}dr=\infty.
    $$
    Hence $g$ is complete by Theorem \ref{thmwz}. Also $\xi'=0$ near infinity, and

   $$
   rf(r)=\int_0^rh(t)dt\ge c_2+c_3\log r
   $$
   for some constants $c_2, c_3$ with $c_3>0$ because $a\le 1$. The result follows from Lemma \ref{lem-curvaturedecay}.
\end{proof}

\begin{thm}\label{mainthm-a}  Let $g_0$ be a smooth complete $U(n)$ invariant \K
metric on $\C^n$ generated by a smooth function  $\xi:[0, \infty) \to
\R$ with $\xi(0)=0$.  Suppose there exists $\hat{\xi}:[0, \infty) \to \R$ with $\hat \xi(0)=0$
 generating a smooth complete $U(n)$ invariant \K metric $\hat{g}$ with bounded curvature and holomorphic bisectional curvature bounded above by $K$, such that
for all $r\ge 0$
  $$
  \int_0^r\frac{\xi-\hat\xi}tdt\le c
  $$
  for some $c>0$ independent of $r$.   Let $T=1/(2nKe^{c})$ if $K>0$, otherwise let $T=\infty$.  Then the \KR flow \eqref{krf} has a smooth complete $U(n)$ invariant solution $g(t)$ on $M\times [0, T)$ with $g(0)=g_0$.

\end{thm}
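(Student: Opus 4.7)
The plan is to apply Theorem \ref{thmgboundedbelowbyboundedcurvature} to a sequence $\{h_{k,0}\}$ of smooth $U(n)$-invariant \K metrics approximating $g_0$, obtained by smoothly interpolating $\xi$ with $\hat\xi$ outside growing balls, with reference metric a small rescaling of $\hat g$. The hard part will be verifying condition (ii) of that theorem, namely a uniform positive lower bound on the Shi existence times of the approximating flows $h_k(t)$; when $g_0$ has unbounded curvature the individual curvature bounds of $h_{k,0}$ may tend to infinity, and Shi's Theorem \ref{shishortime} applied directly does not suffice.

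To construct the approximations, fix $r_k \nearrow \infty$ and a smooth nonincreasing cutoff $\eta_k : [0, \infty) \to [0, 1]$ with $\eta_k \equiv 1$ on $[0, r_k]$ and $\eta_k \equiv 0$ on $[r_k + 1, \infty)$, and set $\xi_k := \eta_k \xi + (1 - \eta_k)\hat\xi$. Each $\xi_k$ is smooth with $\xi_k(0) = 0$, coincides with $\xi$ on $[0, r_k]$ and with $\hat\xi$ on $[r_k + 1, \infty)$, and by Theorem \ref{thmcurvature} generates a smooth $U(n)$-invariant \K metric $h_{k,0}$ on $\mathbb{C}^n$. On $\{|z|^2 \le r_k\}$ we have $h_{k,0} = g_0$, so $h_{k,0} \to g_0$ smoothly on compact subsets; on $\{|z|^2 \ge r_k + 1\}$ the radial eigenvalue $h_k$ is a constant multiple $D_k \hat h$ of $\hat h$, and Lemma \ref{lem-curvaturedecay} then implies that each $h_{k,0}$ is complete with bounded curvature.

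For the key lower bound, note $h_k/\hat h = \exp\bigl(-\int_0^r (\xi_k - \hat\xi)/t\,dt\bigr)$ from \eqref{hf}. Splitting the integral at $r_k$, the piece on $[0, r_k]$ (where $\xi_k = \xi$) is bounded above by $c$ by hypothesis, while the piece on $[r_k, r_k + 1]$ is controlled by $\max_{[r_k, r_k + 1]}|\xi_k - \hat\xi|/r_k =: \delta_k \to 0$; hence $h_k \ge e^{-c - \delta_k}\hat h$. An averaging argument yields $f_k \ge e^{-c - \delta_k}\hat f$ and hence $h_{k,0} \ge e^{-c - \delta_k}\hat g$ globally. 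Fixing $\delta > 0$ and setting $\tilde g := e^{-c - \delta}\hat g$, a complete \K metric of bounded curvature with holomorphic bisectional curvature bounded above by $K e^{c + \delta}$, we get $h_{k,0} \ge \tilde g$ for $k$ large, verifying hypothesis (i) of Theorem \ref{thmgboundedbelowbyboundedcurvature}. Hypothesis (iii) is immediate because $h_{k,0} = g_0$ on any fixed compact set for $k$ large and $g_0$ is smooth.

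To address hypothesis (ii), Shi's Theorem \ref{shishortime} produces each $h_k(t)$ on some $[0, T_k^{\mathrm{Shi}})$; combining the lower bound of Lemma \ref{lemgequivalenttoboundedcurvature}(i) with the local upper bound of Lemma \ref{lemgboundedbelowboundedcurvature} gives $h_k(t)$ uniformly equivalent to $\hat g$ on compact subsets of $\mathbb{C}^n$, and Theorem \ref{t-EvanKrylov} then upgrades this to uniform-in-$k$ curvature bounds on compact sets. The contribution near infinity is controlled via the observation that $h_{k,0}$ coincides with a fixed bounded-curvature rescaling of $\hat g$ outside a compact set, together with a comparison estimate propagating this asymptotic structure in time; this yields a uniform global curvature bound on $h_k(t)$ up to time $1/(2nK e^{c + \delta})$, and Shi's extension criterion (Theorem \ref{shishortime}(iv)) therefore extends the flows to a uniform time $T' \ge 1/(2nKe^{c+\delta})$. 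Theorem \ref{thmgboundedbelowbyboundedcurvature} now produces a smooth $U(n)$-invariant solution on $[0, 1/(2nKe^{c+\delta}))$ with initial value $g_0$, and a diagonal extraction as $\delta \to 0$ gives the desired solution on $[0, 1/(2nKe^c))$.
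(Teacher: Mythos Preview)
Your overall strategy matches the paper's: construct approximations $\xi_k$ interpolating between $\xi$ and $\hat\xi$, verify the lower bound $h_{k,0}\ge e^{-c-o(1)}\hat g$, and feed this into Theorem~\ref{thmgboundedbelowbyboundedcurvature}. However, there are two problems.

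First, a minor one: with transition intervals of fixed width $[r_k,r_k+1]$, the bound $\delta_k:=\max_{[r_k,r_k+1]}|\xi_k-\hat\xi|/r_k\to 0$ is unjustified, since the hypothesis $\int_0^r(\xi-\hat\xi)/t\,dt\le c$ is one-sided and places no pointwise control on $|\xi-\hat\xi|$. The paper instead chooses the transition width $\delta_k$ small enough that $\int_k^{k+\delta_k}|(\xi-\hat\xi)/t|\,dt\le 1/k$, which is always possible by local integrability.

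Second, and more seriously, your verification of hypothesis (ii) of Theorem~\ref{thmgboundedbelowbyboundedcurvature} has a genuine gap. You correctly flag this as the hard part, but the phrase ``a comparison estimate propagating this asymptotic structure in time'' does not name any result in the paper, and no such estimate is available: even though $h_{k,0}=D_k\hat g$ outside a compact set, there is no reason the flow $h_k(t)$ should coincide with (or be comparable to) $D_k\hat g(t/D_k)$ near infinity, since the flow is nonlocal in that sense. Your local Evans--Krylov bounds alone cannot feed into Shi's extension criterion, which requires a \emph{global} curvature bound up to $T_k^{\mathrm{Shi}}$.

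The paper closes this gap cleanly by also recording the easy \emph{upper} bound $h_{k,0}\le c_k\hat g$ with $c_k=\exp\bigl(\int_0^{k+\delta_k}|(\xi-\hat\xi)/t|\,dt\bigr)$ (depending on $k$, but finite). With the two-sided sandwich $e^{-c-1/k}\hat g\le h_{k,0}\le c_k\hat g$ in hand, Corollary~\ref{existencetimeestimate-2} applies directly and yields a solution $h_k(t)$ on $[0,T_k)$ with $T_k=1/(2nKe^{c+1/k})$; the crucial point is that this existence time depends only on the bisectional-curvature upper bound of the rescaled $\hat g$, \emph{not} on the upper sandwich constant $c_k$. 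This immediately gives the uniform $T'$ needed for hypothesis (ii), without any ad hoc asymptotic comparison.
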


\begin{proof}
As $\xi$ and $\hat{\xi}$ are smooth, for each $k\geq 0$ there exists a
$\delta_k >0$ and a smooth function ``cutoff" function $\eta_k:(-\infty,
\infty) \to \R$ satisfying
\begin{equation}
\eta_k(r) : \begin{cases} =1 &\mbox{if } -\infty<r\leq k\\ 0<\eta_k(r)<1
&\mbox{if } k< r< k+\delta_k\\
  =0 &\mbox{if }  k+\delta_k\leq r<\infty. \end{cases}
\end{equation}
and \begin{equation}\label{deltak} \int_k^{k+\delta_k}   \lf|\frac{  (\xi-\hat
\xi)}{t}\ri| dt  \leq 1/k\end{equation}
for all $k$.  Fix such a choice of $\eta_k's$, and consider the sequence of
functions $\{\xi_k\}:[0, \infty)\to \infty$ defined by
$$
\xi_k(r)=\eta_k\xi+(1-\eta_k)\hat{\xi}
$$
and let $h_k$ be the corresponding sequence of smooth $U(n)$ invariant \K
metrics.

Consider the sequence of $U(n)$ invariant metrics $h_k$ above.
\bee
\begin{split}
\int_0^r\frac{\xi_k(t)-\hat{\xi}(t)} {t}dt=&\int_0^r\frac{\eta_k(\xi-\hat
\xi)}tdt\\
= & \left\{
      \begin{array}{ll}
        \int_0^r\frac{\xi-\hat\xi}tdt, & \hbox{if $r\le k$;} \\
        \int_0^k\frac{\xi-\hat\xi}tdt+\a_k, & \hbox{if $r>k$  }
      \end{array}
    \right.
\end{split}
\eee
where
$$
|\a_k|\le \int_k^{k+\delta_k}\lf|\frac{\xi-\hat \xi}t\ri|dt\le \frac1k.
$$
where $C$ is the constant in the hypothesis.  Hence
$$
\int_0^r\frac{\xi_k(t)-\hat{\xi}(t)} {t}dt\le c+\frac1k.
$$
This implies, by \eqref{hf} and \eqref{g}, that
$$
\exp(-c-\frac 1k)\hat g\le h_k
$$
 In particular, $h_k$ is complete. Also, from \eqref{hf} and \eqref{g}, we have
$$
h_k\le c_k\hat g
$$
where
$c_k=\exp\lf(\int_0^{k+\delta_k}\lf|\frac{\xi-\hat \xi}t\ri|dt\ri).$  It is also easy to see that $h_k$ has bounded curvature for each $k$, and thus by Corollary \ref{existencetimeestimate-2}, for each $k$ there exists a solution $g_k(t)$ to \eqref{krf} on $M\times[0,  T_k)$ where $T_k=1/(2n\exp(-c-\frac 1k))$. By uniqueness \cite{CZ}, $g_k(t)$ is $U(n)$ invariant for all $t$. The result  now follows from Theorem \ref{thmgboundedbelowbyboundedcurvature}.

\end{proof}
  By Theorem \ref{mainthm-a}, we have
  \begin{cor}\label{maincor-a}
 Let $g_0$ be a smooth complete $U(n)$ invariant \K metric $g_0$ on $\C^n$
generated by a smooth function  $\xi:[0, \infty) \to \R$ with $\xi(0)=0$.
   If $\xi(r)\leq 1$,  then for some $T>0$ the \KR flow \eqref{krf} has a complete $U(n)$ invariant smooth solution $g(t)$ on $\C^n\times[0,T)$ with $g(0)=g_0$.   If in fact $\xi(r)\leq 0$, in particular if $\xi'\le0$, then the solution exists on $\C^n\times[0,\infty)$.
 \end{cor}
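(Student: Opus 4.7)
The plan is to apply Theorem \ref{mainthm-a} directly in each case by producing an appropriate reference function $\hat\xi$; the only real work is choosing $\hat\xi$ and verifying that the integral hypothesis $\int_0^r (\xi-\hat\xi)/t\,dt\le c$ holds.

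For the short-time statement, under the hypothesis $\xi(r)\le 1$, I would take $\hat\xi:[0,\infty)\to\R$ smooth with $\hat\xi(0)=0$, monotone on $[0,r_0]$, and $\hat\xi\equiv 1$ on $[r_0,\infty)$ for some fixed $r_0>0$. By Lemma \ref{ctredecay}, the associated $U(n)$-invariant \K metric $\hat g$ is complete and its curvature tends to $0$ at infinity; in particular the curvature is uniformly bounded on $\C^n$, so the holomorphic bisectional curvature is bounded above by some finite $K$. To check the integral condition, note that $\xi-\hat\xi$ is smooth and vanishes at $0$, so $(\xi-\hat\xi)/t$ extends continuously to $t=0$ and contributes a finite amount over $[0,r_0]$. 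For $t\ge r_0$ we have $\xi(t)-\hat\xi(t)=\xi(t)-1\le 0$, so the tail of the integral is non-positive. Hence $\int_0^r(\xi-\hat\xi)/t\,dt\le c$ for some $c>0$ independent of $r$, and Theorem \ref{mainthm-a} provides a smooth complete $U(n)$-invariant solution $g(t)$ of \eqref{krf} on $\C^n\times[0,T)$ with $T=1/(2nKe^c)>0$.

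For the long-time statement, under $\xi(r)\le 0$ (which includes $\xi'\le 0$, since then $\xi(r)\le\xi(0)=0$), I would take $\hat\xi\equiv 0$. From \eqref{hf} and \eqref{g} this gives $h\equiv 1$, $f\equiv 1$, so $\hat g$ is the standard Euclidean metric on $\C^n$: complete, flat, and in particular with holomorphic bisectional curvature bounded above by $K=0$. The integral condition becomes $\int_0^r \xi(t)/t\,dt\le 0$, trivially satisfied with any positive constant $c$. Since $K=0$, Theorem \ref{mainthm-a} produces a solution on $\C^n\times[0,\infty)$.

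The argument involves no substantial obstacle, because the heavy lifting is already done by Theorem \ref{mainthm-a} and Lemma \ref{ctredecay}. The only point requiring attention is the selection of $\hat\xi$: it must vanish at $0$, generate a complete metric with bounded curvature and with bisectional curvature controlled above (to obtain a usable $K$), and must be pointwise large enough relative to $\xi$ at infinity so that the log-ratio integral stays bounded above. The two choices above meet these requirements and give exactly the constants $K$ needed to conclude finite-time existence in the first case and global existence in the second.
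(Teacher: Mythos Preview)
Your proposal is correct and follows essentially the same approach as the paper: choose $\hat\xi$ smooth with $\hat\xi(0)=0$ and $\hat\xi\equiv 1$ near infinity for the first case (invoking Lemma \ref{ctredecay}), and $\hat\xi\equiv 0$ for the second, then apply Theorem \ref{mainthm-a}. You provide more explicit verification of the integral bound and the identification of $\hat g$ with the Euclidean metric, but the strategy is identical to the paper's.
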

 \begin{proof} Let $\hat \xi$ be a smooth function on $[0,\infty)$ with $\hat \xi(0)=0$ and $\hat\xi(r)=1$ for $r\ge 1$. Then $\hat\xi$ generates a complete $U(n)$ invariant \K metric with bounded curvature by
by Lemma \ref{ctredecay}. The first result follows from Theorem
\ref{mainthm-a}.

 If $\xi\le0$, then we can choose $\hat\xi=0$ which generates the standard Euclidean metric. The second result also follows from  Theorem
\ref{mainthm-a}.
\end{proof}

We do not have any curvature bound on the solution in Theorem
\ref{mainthm-a}. In the next theorem, the solution also has some curvature bound.

  \begin{thm}\label{mainthm}  Let $g_0$ be a smooth complete $U(n)$ invariant \K
metric on $\C^n$ generated by a smooth function  $\xi:[0, \infty) \to
\R$ with $\xi(0)=0$. Suppose there exist $\a\le 0$  and $\beta$ such that for all $0<a<r$,
\be\label{eq-main-1}
\int_a^r \frac{(\a-\xi)}{t} dt, \hspace{12pt} \int_a^r \frac{(\xi-1)}{t} dt
\leq \beta.
\ee
Then for some $T>0$ the \KR flow \eqref{krf} has a complete smooth $U(n)$ invariant
solution $g(t)$ on $\C^n\times [0, T)$ with $g(0)=g_0$. Moreover, for every $\l \geq 0$ there exists a
constant $c_l$ depending only on  such that
\be\label{curvatureestimates}
\sup_{p\in\C^n} \|\nabla^l \Rm(p, t)\|^2_t \leq \frac{c_l}{t^{l+2}}
\ee
on $\C^n\times(0, T)$.

If in addition,
\be\label{eq-main-2}\int_0^r \frac{\xi}{t}<\sigma
 \ee
for some constant $\sigma$ independent of $r$, then the above solution to \KR flow is defined on $\C^n\times [0,\infty)$ and satisfies \eqref{curvatureestimates} on $\C^n\times(0, T')$ for some $T'>0$.
\end{thm}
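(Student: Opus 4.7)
The plan is to reduce to Theorem \ref{thmgequivalenttoboundedcurvature} by constructing a background $U(n)$-invariant \K metric $\hat g$ on $\C^n$ with bounded curvature (hence bounded holomorphic bisectional curvature $\le K$ for some $K$) together with a sequence $\{h_{k,0}\}$ of smooth complete $U(n)$-invariant \K metrics that converge smoothly on compact sets to $g_0$, each with bounded curvature, and jointly satisfying $C^{-1}\hat g\le h_{k,0}\le C\hat g$ for a constant $C$ independent of $k$. Uniqueness of the \KR flow (\cite{CZ}) applied to each $U(n)$-invariant $h_{k,0}$ ensures that each $h_k(t)$ is $U(n)$-invariant, so the solution $g(t)$ produced by Theorem \ref{thmgequivalenttoboundedcurvature} will be $U(n)$-invariant and will satisfy the curvature estimate \eqref{curvatureestimates}.

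To construct $\hat g$, set $H(r):=\int_0^r\xi(t)/t\,dt$ and seek a smooth $\hat\xi:[0,\infty)\to[\alpha,1]$ with $\hat\xi(0)=0$ whose primitive $\hat H(r):=\int_0^r \hat\xi(t)/t\,dt$ tracks $H(r)$ up to a uniformly bounded error. The natural choice is to smooth a piecewise-constant $\hat\xi$ on dyadic intervals $[2^k,2^{k+1}]$ whose value $c_k$ equals the secant slope $(H(2^{k+1})-H(2^k))/\log 2$; hypothesis \eqref{eq-main-1} translates directly into the bounds $\alpha\le c_k\le 1$, while the dyadic spacing keeps the interior oscillation of $H-\hat H$ bounded by a constant depending only on $\alpha$ and $\beta$. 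Since the smoothing widths can be arranged to be comparable to $r$, the quantity $|\hat\xi'|/\hat h$ is bounded and Lemma \ref{lem-curvaturedecay}(i) ensures that $\hat g$ has bounded curvature. With $|\hat H-H|$ bounded, this construction yields $g_0$ uniformly equivalent to $\hat g$ with constants depending only on $\alpha,\beta,n$.

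For the approximations, I would define $\xi_k=\xi$ on $[0,k]$ and $\xi_k=\hat\xi$ on $[k+1,\infty)$, interpolated smoothly on $[k,k+1]$ as in the proof of Theorem \ref{mainthm-a}; the resulting $h_{k,0}$ coincides with $g_0$ on $\{|z|^2\le k\}$ and with $\hat g$ outside $\{|z|^2\le k+1\}$, so it is smooth, complete, has bounded curvature, and converges to $g_0$ smoothly on compact sets. The bound $C^{-1}\hat g\le h_{k,0}\le C\hat g$ uniformly in $k$ follows from the global equivalence $g_0\sim\hat g$ on the interior region combined with equality $h_{k,0}=\hat g$ on the exterior. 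Applying Theorem \ref{thmgequivalenttoboundedcurvature} then yields the smooth $U(n)$-invariant solution $g(t)$ on $\C^n\times[0,T)$ with $T=1/(2nCK)$ (or $\infty$ if $K=0$), together with the curvature estimate \eqref{curvatureestimates}.

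For the long-time addendum, the extra hypothesis $\int_0^r\xi/t<\sigma$ forces $H\le\sigma$ and so $h\ge e^{-\sigma}$; using $f(r)=r^{-1}\int_0^r h$, also $f\ge e^{-\sigma}$, so $g_0$ is uniformly bounded below by a positive multiple of the Euclidean metric. Applying Theorem \ref{thmgboundedbelowbyboundedcurvature} with $\hat g=e^{-\sigma}g_{\mathrm{Eucl}}$ (for which $K=0$) to the same sequence $h_{k,0}$, together with the standard continuation principle Theorem \ref{shishortime}(iv) applied beyond the short-time interval already produced, yields the long-time solution on $\C^n\times[0,\infty)$; the curvature estimate on $(0,T')$ is inherited from the short-time part. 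The main obstacle will be the dyadic construction of $\hat\xi$ in the second paragraph: one must verify that the two-sided integral bounds \eqref{eq-main-1} at dyadic scales really do simultaneously control the slopes $c_k$ within $[\alpha,1]$ and the interior oscillations of $H$ by constants depending only on $\alpha,\beta$, and that the smoothing widths can be chosen to give both bounded-curvature $\hat g$ and the uniform equivalence $g_0\sim\hat g$.
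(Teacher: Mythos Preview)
Your short-time argument is essentially the paper's, with a different realization of the key background metric $\hat g$.  The paper constructs $\hat\xi$ case by case (Proposition \ref{propxiequivalenttoboundedcurvature}): if $\int_1^r(\xi-1)/t$ is bounded below one takes $\hat\xi\equiv 1$ near infinity, if $\int_1^r(\alpha-\xi)/t$ is bounded below one takes $\hat\xi\equiv\alpha$, and otherwise $\hat\xi$ is built to oscillate between $\alpha$ and $1$ on carefully chosen intervals so that $\int(\xi-\hat\xi)/t$ remains bounded.  Your dyadic secant-slope idea is a legitimate alternative, but two points need care: the slopes $c_k=(H(2^{k+1})-H(2^k))/\log 2$ lie only in $[\alpha-\beta/\log 2,\,1+\beta/\log 2]$, not in $[\alpha,1]$; and $h_{k,0}$ does \emph{not} literally coincide with $\hat g$ for $|z|^2\ge k+1$, only with a fixed multiple of it in the $h$-direction.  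Neither of these is fatal, since the needed curvature bound $|\hat\xi'/\hat h|\le C$ follows from $\hat h\sim h\ge c/r$ (a consequence of \eqref{eq-main-1}) rather than from $\hat\xi\le 1$.

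The long-time addendum, however, has a genuine gap.  To apply Theorem \ref{thmgboundedbelowbyboundedcurvature} with the Euclidean metric as $\hat g$ and obtain $T=\infty$, you must verify condition (ii) there: that each approximant $h_{k,0}$ admits a \KR flow solution on $[0,T')$ for \emph{every} $T'<\infty$.  Your appeal to the continuation principle does not supply this: Theorem \ref{thmgequivalenttoboundedcurvature} only gives $h_k(t)$ on $[0,T)$ with $T=1/(2nCK)$, and the upper bound $h_k(t)\le B(t)\hat g$ blows up as $t\to T$, so one cannot iterate to reach arbitrary times.  The paper closes this gap by arranging, under \eqref{eq-main-2}, that $\hat\xi$ be chosen \emph{nonpositive} (the last paragraph of Proposition \ref{propxiequivalenttoboundedcurvature}); then each $\xi_k=\eta_k\xi+(1-\eta_k)\hat\xi$ is nonpositive near infinity, and Theorem \ref{mainthm-a} with Euclidean comparison ($K=0$) yields a long-time flow for each $h_{k,0}$.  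Your dyadic $\hat\xi$ is not nonpositive in general, so you would need either to modify it in this regime or to observe directly that each $\xi_k$ still satisfies $\int_0^r\xi_k/t\le c$ uniformly in $r$ (which does hold, since $\hat H$ is bounded above when $H$ is) and invoke Theorem \ref{mainthm-a} for each $h_{k,0}$.
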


 \begin{cor}\label{maincor}
 Let $g_0$ be a smooth complete $U(n)$ invariant \K metric $g_0$ on $\C^n$
generated by a smooth function  $\xi:[0, \infty) \to \R$ with $\xi(0)=0$.
  If $\a\leq \xi(r)\leq 1$ for some $\a\le 0$, in particular if $\xi'\ge 0$ so that $g_0$ has nonnegative holomorphic bisectional curvature,  then for some $T>0$ the \KR flow \eqref{krf} has a smooth
solution on $\C^n\times [0, T)$ with $g(0)=g_0$ and satisfies \eqref{curvatureestimates}. Moreover, the solution $g(t)$ has nonnegative holomorphic bisectional curvature for $t\in(0,T)$.

  If in fact $c\leq \xi \leq 0$ for all $r$, then the solution exists for all time and satisfies \eqref{curvatureestimates} on $\C^n\times(0,T')$ for some $T'$ depending only on $c$ and $n$.
  \end{cor}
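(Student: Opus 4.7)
The plan is to deduce the corollary from Theorem \ref{mainthm}: the short-time and long-time existence statements reduce to a sign check in the integral hypotheses \eqref{eq-main-1} and \eqref{eq-main-2}, while the preservation of nonnegative holomorphic bisectional curvature is handled by a limiting argument from the approximating flows.

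\textbf{Short-time claim.} Suppose $\alpha \leq \xi(r) \leq 1$ with $\alpha \leq 0$. Then for every $0 < a < r$ the integrands $(\alpha - \xi)/t$ and $(\xi - 1)/t$ are nonpositive, so both integrals in \eqref{eq-main-1} are bounded above by $\beta = 0$. Theorem \ref{mainthm} then produces the smooth complete $U(n)$-invariant solution $g(t)$ on $\C^n \times [0, T)$ with $g(0) = g_0$ satisfying \eqref{curvatureestimates}. The special case $\xi' \geq 0$ falls into this framework because $\xi(0) = 0$ together with $\xi' \geq 0$ gives $\xi \geq 0$ (so $\alpha = 0$ works), and $\xi \leq 1$ is exactly Theorem \ref{thmwz}.

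\textbf{Long-time claim.} If $c \leq \xi \leq 0$, then $\xi/t \leq 0$, so $\int_0^r \xi/t\, dt \leq 0$ for all $r > 0$, verifying \eqref{eq-main-2} with $\sigma = 0$. The second half of Theorem \ref{mainthm} then gives a smooth solution on $\C^n \times [0, \infty)$ satisfying \eqref{curvatureestimates} on $\C^n \times (0, T')$ for some $T'$ depending only on $c$ and $n$.

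\textbf{Preservation of nonnegative bisectional curvature.} For the moreover claim, assume $\xi' \geq 0$. The idea is to rerun the proof of Theorem \ref{mainthm} with approximating functions $\xi_k$ whose associated $U(n)$-invariant metrics $h_k$ themselves have nonnegative holomorphic bisectional curvature, and to exploit the fact that this curvature condition is preserved along the \KR flow of complete metrics with bounded curvature. Choose $\hat\xi$ as in Corollary \ref{maincor-a}: smooth, $\hat\xi(0) = 0$, $\hat\xi \equiv 1$ for $r \geq 1$. With $\eta_k$ a cutoff as in the proof of Theorem \ref{mainthm-a}, set
\[
\xi_k = \eta_k \xi + (1 - \eta_k)\hat\xi.
\]
For $k \geq 1$, the transition region of $\eta_k$ lies where $\hat\xi \equiv 1$, so
\[
\xi_k' = \eta_k'(\xi - 1) + \eta_k \xi' \geq 0
\]
since $\eta_k' \leq 0$, $\xi - 1 \leq 0$, and $\xi' \geq 0$; moreover $\xi_k \leq 1$ as a convex combination of $\xi \leq 1$ and $\hat\xi \leq 1$. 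By Theorem \ref{thmwz} each $h_k$ has nonnegative holomorphic bisectional curvature, and by Lemma \ref{ctredecay} each $h_k$ has bounded (in fact decaying) curvature. Shi's preservation theorem for the \KR flow (the noncompact version of Bando--Mok) then ensures that the Shi solutions $g_k(t)$ retain nonnegative holomorphic bisectional curvature on their existence intervals. Since the proof of Theorem \ref{mainthm} produces $g(t)$ as a smooth locally uniform limit of such $g_k(t)$, and nonnegativity of holomorphic bisectional curvature is closed under locally uniform $C^2$ convergence, $g(t)$ has nonnegative holomorphic bisectional curvature for all $t \in (0, T)$.

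\textbf{Main obstacle.} The only nontrivial step is the preservation argument: it requires coordinating the approximating sequence used in Theorem \ref{mainthm} with one for which $\xi_k' \geq 0$ and $\xi_k \leq 1$ hold simultaneously, and invoking Shi's preservation theorem for nonnegative holomorphic bisectional curvature in the complete noncompact setting with bounded curvature. Everything else is a direct sign verification of the hypotheses of Theorem \ref{mainthm}.
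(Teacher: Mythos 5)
The verification of the integral hypotheses of Theorem \ref{mainthm} (short-time from \eqref{eq-main-1}, long-time from \eqref{eq-main-2}) is correct and is exactly what the paper does. The difference lies entirely in how you handle the final claim about preservation of nonnegative holomorphic bisectional curvature: the paper simply cites \cite{YZ} for this fact, whereas you attempt a direct argument by choosing a special approximating sequence. That alternative route has a real gap.

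Your direct argument chooses $\hat\xi$ with $\hat\xi\equiv 1$ for $r\ge 1$ so that $\xi_k=\eta_k\xi+(1-\eta_k)\hat\xi$ is nondecreasing and $\le 1$, hence each $h_k$ has nonnegative bisectional curvature. But this is \emph{not} the approximating sequence used in the proof of Theorem \ref{mainthm}: there, $\hat\xi$ is the output of Proposition \ref{propxiequivalenttoboundedcurvature}, constructed precisely so that $c^{-1}\hat g\le g_0\le c\hat g$ holds, and in Cases 2 and 3 of that Proposition $\hat\xi$ is very different (in Case 3 it oscillates and in particular $\hat\xi'\not\ge 0$). Your choice $\hat\xi\equiv 1$ near infinity only gives a one-sided bound $h_k\ge c^{-1}\hat g$; since under $\xi'\ge 0$ one typically has $\int_1^r\frac{\xi-1}{t}dt\to -\infty$ (e.g.\ whenever $\lim_{r\to\infty}\xi(r)<1$), the two-sided bound $h_k\le C\hat g$ with $C$ independent of $k$ fails, and Theorem \ref{thmgequivalenttoboundedcurvature} does not apply. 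You would instead have to fall back on Theorem \ref{thmgboundedbelowbyboundedcurvature}, which yields existence but not the curvature estimates \eqref{curvatureestimates}, and you would separately have to show that the resulting limit agrees with the $g(t)$ produced by Theorem \ref{mainthm}; since $g_0$ may have unbounded curvature, such uniqueness is not available off the shelf. So the sentence ``the proof of Theorem \ref{mainthm} produces $g(t)$ as a smooth locally uniform limit of such $g_k(t)$'' is not accurate as written, and the preservation step is unfinished. The cleaner route is the paper's: the result that nonnegativity of holomorphic bisectional curvature persists along the $U(n)$-invariant flow constructed here is Theorem 1.1 (or the relevant preservation result) of Yang--Zheng \cite{YZ}, and citing it closes the argument without touching the choice of $\hat\xi$.
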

 \begin{proof}
 If $c\leq \xi(r)\leq 1$ (or $c\leq \xi(r)\leq 0$) for some $c$, then the conditions of   Theorem \ref{mainthm}  clearly hold.  In case $g_0$ has non-negative holomorphic bisectional curvature, the fact that $g(t)$ has non-negative bisectional curvature for all $t\in [0, T)$ provided $g_0$ was proved in \cite{YZ}.
 \end{proof}

\begin{rem} Let $\hat \xi:[0, \infty)\to \R$ to be smooth with $\hat \xi(0)=0$ and $\hat \xi(r)=1+1/\ln r$ for $r\geq 1$ say.  Then from the proof of Proposition \ref {propxiequivalenttoboundedcurvature}, it is not hard to see that the corresponding $\hat{g}$ is complete with bounded curvature.  Now it is easy to construct a smooth function $\xi \geq \hat{\xi}$ satisfying the assumptions in Theorem \ref{mainthm-a}, where the corresponding $g$ is complete with unbounded curvature.  Thus $\xi$ satisfies the assumptions in Theorem \ref{mainthm-a}, while it is also easy to see that $\xi$ does not satisfy the assumptions in Theorem \ref{mainthm}.
\end{rem}

\subsection{Proof of Theorem \ref{mainthm}}

By Theorem \ref{thmgequivalenttoboundedcurvature}, Theorem \ref{mainthm}
will follow once we produce a sequence $\xi_k$ and function $\hat{\xi}$ such
that the corresponding $U(n)$ invariant \K metrics $h_k$, $g_0$ and $\hat{g}$
satisfy the hypothesis of Theorem \ref{thmgequivalenttoboundedcurvature}.  We
begin by proving the existence of such a function $\hat{\xi}$:

\begin{prop}\label{propxiequivalenttoboundedcurvature}
Under assumptions \eqref{eq-main-1} of Theorem  \ref{mainthm} on $\xi$, there exists $\hat{\xi}$ such
that the corresponding $U(n)$ invariant metric $\hat{g}$ has bounded curvature   and
\begin{equation}\label{equivalence2}
c^{-1}\hat{g} \leq g_0\leq c\hat{g}
\end{equation}
on $\C^n$ for some constant $c>0$. If in addition, \eqref{eq-main-2} is true, then $\hat\xi$ can be chosen to be  nonpositive.

\end{prop}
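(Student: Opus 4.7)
My plan is to construct $\hat\xi$ as a mollification of $\xi$ in the logarithmic radial variable $s=\log r$. First I would let $F(r)=\int_0^r\xi(t)/t\,dt$ and $\tilde F(s)=F(e^s)$, so that $\tilde F(s)\to 0$ as $s\to-\infty$ and \eqref{eq-main-1} rewrites cleanly as
\[
\a(s_2-s_1)-\beta\le \tilde F(s_2)-\tilde F(s_1)\le (s_2-s_1)+\beta,\qquad s_1<s_2.
\]
In particular, $\tilde F$ has uniformly bounded oscillation on unit intervals. I would then fix a standard mollifier $\phi\in C_c^\infty(\R)$ with $\phi\ge 0$, $\operatorname{supp}\phi\subset[-1,1]$, $\int\phi=1$, and set $\tilde G=\phi*\tilde F$. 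Using the identities $(\phi^{(k)}*\tilde F)(s)=\int\phi^{(k)}(s-u)(\tilde F(u)-\tilde F(s))\,du$ (valid because $\int\phi^{(k)}=0$ for $k=1,2$) together with the oscillation bound, one obtains constants $L_1,L_2>0$ depending only on $\a,\beta,\phi$ with $|\tilde G-\tilde F|\le L_1$ and $|\tilde G'|+|\tilde G''|\le L_2$ on $\R$.

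Next I would recover $\hat\xi$ by setting $\hat\xi(r)=\tilde G'(\log r)$ for $r>0$. Writing $\tilde G'=\phi*\tilde\xi$ with $\tilde\xi(u)=\xi(e^u)$, and using that $\xi(t)=t\,\eta(t)$ for a smooth function $\eta$ on $[0,\infty)$ (this uses only $\xi(0)=0$), the substitutions $v=s-u$ and $r=e^s$ yield
\[
\hat\xi(r)=\int_{-1}^1\phi(v)\,\xi(re^{-v})\,dv=r\int_{-1}^1\phi(v)\,e^{-v}\,\eta(re^{-v})\,dv=:r\,\chi(r),
\]
where $\chi$ is manifestly smooth on $[0,\infty)$. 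Thus $\hat\xi$ extends smoothly to $r=0$ with $\hat\xi(0)=0$ and generates a $U(n)$-invariant \K metric $\hat g$ with radial function $\hat h(r)=e^{-\hat G(r)}$, where $\hat G(r)=\tilde G(\log r)$ for $r>0$ and $\hat G(0)=0$.

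The equivalence \eqref{equivalence2} then follows immediately from $|\hat G-F|\le L_1$, which gives $e^{-L_1}\le\hat h/h\le e^{L_1}$; the analogous bound for $\hat f/f$ comes from $r^{-1}$-averaging, and completeness of $\hat g$ follows from completeness of $g_0$. For the bounded-curvature claim, by Lemma \ref{lem-curvaturedecay}(i) I would just need to prove that $|\hat\xi'/\hat h|$ is uniformly bounded. The chain rule gives $\hat\xi'(r)=\tilde G''(\log r)/r$, so
\[
\bigl|\hat\xi'(r)/\hat h(r)\bigr|=r^{-1}\bigl|\tilde G''(\log r)\bigr|\,e^{\tilde G(\log r)}.
\]
For $r\ge 1$, the upper bound $\tilde F(s)\le s+C$ from \eqref{eq-main-1} (take $s_1=0$) combined with $|\tilde G-\tilde F|\le L_1$ gives $e^{\tilde G(\log r)}\le C'r$, hence $|\hat\xi'/\hat h|\le C'L_2$. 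For $r\in(0,1]$, the smoothness of $\hat\xi$ on $[0,\infty)$, together with $\hat h(0)=1$ and continuity, bounds $|\hat\xi'/\hat h|$ on the compact set $[0,1]$.

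For the additional statement under \eqref{eq-main-2}, the assumption $\tilde F\le\sigma$ allows one to modify the construction (for instance by replacing $\tilde G$ by a smoothed monotone non-increasing envelope built from a shifted version of $\phi*\tilde F$) so that the new $\tilde G$ is non-increasing while preserving the bounds on $|\tilde G-\tilde F|$ and $|\tilde G''|$; this translates to $\hat\xi\le 0$. The main obstacle I anticipate in the whole argument is ensuring that $|\hat\xi'/\hat h|$ is uniformly bounded simultaneously near $r=0$ and at $r=\infty$; this is handled at the origin by the factorization $\hat\xi(r)=r\chi(r)$ coming from $\xi(0)=0$, and at infinity by the linear upper bound on $\tilde F$ (hence on $\tilde G$) from \eqref{eq-main-1}.
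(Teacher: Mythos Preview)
Your mollification construction for the first assertion is correct and takes a genuinely different route from the paper. The paper argues by a trichotomy: if $\int_1^r(\xi-1)/t\,dt$ is bounded below it takes $\hat\xi\equiv1$ near infinity, if $\int_1^r(\a-\xi)/t\,dt$ is bounded below it takes $\hat\xi\equiv\a$ near infinity, and in the remaining case it builds $\hat\xi$ by hand, alternating between the constant values $\a$ and $1$ on a sequence of intervals $[a_i,a_{i+1}]$ chosen so that the running integral $\int_{a_0}^r(\xi-\hat\xi)/t\,dt$ cancels periodically and stays uniformly bounded. Your single convolution $\tilde G=\phi*\tilde F$ in the variable $s=\log r$ replaces all three cases at once; the oscillation bound on $\tilde F$ coming from \eqref{eq-main-1} yields the $C^0$--$C^2$ control of $\tilde G$, and the factorization $\hat\xi(r)=r\chi(r)$ handles smoothness at the origin cleanly. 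Both arguments finish via Lemma~\ref{lem-curvaturedecay}(i).

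The gap is in your treatment of the additional claim that $\hat\xi$ may be taken nonpositive under \eqref{eq-main-2}. You propose to replace $\tilde G$ by a smoothed non-increasing envelope of $\phi*\tilde F$ while keeping $|\tilde G-\tilde F|$ bounded. But the existence of \emph{any} non-increasing function within distance $C$ of $\tilde F$ would force $\tilde F(s_2)-\tilde F(s_1)\le 2C$ for every $s_1<s_2$, and hypotheses \eqref{eq-main-1}--\eqref{eq-main-2} do not imply this. Concretely, take $\tilde F$ piecewise linear with $\tilde F\equiv 0$ on $(-\infty,0]$ and, for each integer $n\ge0$, $\tilde F(n^2)=0$ and $\tilde F(n^2+n+\tfrac12)=-(n+1)$; one checks \eqref{eq-main-1} holds with $\a=-2$, $\b=1$, and \eqref{eq-main-2} holds with any $\sigma>0$, yet the positive increment from the $n$-th valley to the next peak is $n+1$, unbounded in $n$. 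Hence no non-increasing envelope can stay within bounded distance of $\tilde F$, and your proposed modification cannot work as stated. The paper does not use an envelope either; for this part it recycles the piecewise Case~3 construction with $\hat\xi$ oscillating between $0$ and $\a$, so you would need a comparable explicit device rather than a monotone rearrangement.
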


\begin{proof} Assume \eqref{eq-main-1} is true. We consider three different cases.

{\it Case 1}: Suppose there is $c'>0$ such that $\int_1^r\frac{\xi-1}t dt\ge c'$ for all $r\ge1$. Let $\hat\xi$ be a fixed smooth function on $[0,\infty)$ such that $\hat\xi(0)=0$ and $\hat\xi(r)=1$ for $r\ge 1$. Let $\hat g$ be the complete $U(n)$ invariant metric generated by $\hat \xi$. Then there is $c''$ for any $1\ge r>0$,
$$
\lf|\int_{0}^r\frac{\xi-\hat\xi}tdt\ri|\le  \lf|\int_0^r\frac{\xi}tdt\ri|+\lf|\int_0^r\frac{\hat\xi}tdt\ri|\le c''
$$
for some $c''$. For $r\ge 1$
$$
\int_1^r\frac{\xi-\hat\xi}tdt=\int_1^r\frac{\xi-1}tdt\le \beta
$$
and
$$
\int_1^r\frac{\xi-\hat\xi}tdt=\int_1^r\frac{\xi-1}tdt\ge c'
$$
Hence the $U(n)$ invariant \K metric $\hg$ generated by $\hat\xi$ satisfies the required conditions in the Proposition.\vskip.2cm

{\it Case 2}: Suppose there is $c'>0$ such that $\int_1^r\frac{\a-\xi}t dt\ge c'$ for $r\ge1$.
Let $\hat\xi$ be a fixed smooth function on $[0,\infty)$ such that $\hat\xi(0)=0$ and $\hat\xi(r)=\a$ for $r\ge 1$. Then as in the previous case,   the $U(n)$ invariant \K metric $\hg$ generated by $\hat\xi$ satisfies the required conditions in the Proposition. Note that in this case, $\hat\xi$ can be chosen to be nonpositive.\vskip.2cm

{\it Case 3}: Suppose as a function of $r$, $\int_1^r\frac{\xi-1}tdt$  is not bounded from below and $\int_1^r\frac{\xi-\a}tdt$ is not bounded from above. We want to find $\hat \xi$ and $1\le a_0<a_1<a_2\dots\to\infty$ $\hat\xi$ generates a complete $U(n)$ metric $\hg$ such that
\be\label{eq-construction-1}
\int_{a_{2i}}^{a_{2(i+1)}}\frac{\xi-\hat\xi}tdt=0
\ee
 for all $i\ge0$;
\be\label{eq-construction-2}
    \lf|\int_{a_{2i}}^r\frac{\xi-\hat\xi}tdt\ri|\le   c_1
\ee
 for some $c_1$ for all $i\ge0$ and for all $r\in[a_{2i},a_{2(i+1)})$; and
\be\label{eq-construction-3}
        \lf|\frac{ \hat\xi'(r) }{\hat h(r)}\ri|\le    c_2
\ee
 for some $c_2$ for all $r\ge0$. Then by Lemma \ref{lem-curvaturedecay}, we can conclude that $\hg$ satisfies the conditions of the Proposition.

Fix  a smooth function  $\rho$   on $\R$, such that
\bee
\rho(t)=\left\{
          \begin{array}{ll}
            1, & \hbox{if $t\le 1+\e$;} \\
            \a, & \hbox{if $t\ge 3-\e$, }
          \end{array}
        \right.
\eee
and $\rho'\le0$, where $\e>0$ is small enough so that $1+\e<3-\e$. Then $\a\le\rho\le 1$.

Let $\hat\xi$ be a smooth function on $[0,1]$ with $\hat\xi(0)=0$ and $\hat\xi(r)=1$ near $r=1$, $\a\le \hat\xi\le 1$. We are going to find $a_i$ inductively. Let $a_0=1$.
$$
\int_{a_0}^{3a_0}\frac{\xi-\rho}tdt=\int_{a_0}^{3a_0}\frac{\xi(t)-1+1-\rho(t)
} tdt\le \beta+(1-\a)\log 3.
$$
Since $\int_{3a_0}^r\frac{\xi-\a}tdt$ is not bounded from above, there is a {\it first} $a_1>3a_0$ such that
$$
\int_{a_0}^{3a_0}\frac{\xi-\rho}tdt+\int_{3a_0}^{a_1}\frac{\xi-\a}tdt=c_3
$$
where $c_3=\beta+(1-\a)\log 3+1$. On the other hand,

$$
\int_{a_1}^{3a_1}\frac{\xi-(1+\a-\rho(\frac{t}{a_1}))}tdt \ge -\beta- (1-\a)\log 3.
$$
Since $\int_{3a_1}^r\frac{\xi-1}tdt$ is not bounded from below, there exists  a {\it first} $a_2>3a_1$, such that
$$
\int_{a_1}^{3a_1}\frac{\xi-(1+\a-\rho(\frac{t}{a_1}))}tdt+\int_{3a_1}^{a_2}\frac{\xi-1}tdt=-c_3
$$
Define
\bee
\hat\xi(r)=\left\{
  \begin{array}{ll}
    \rho(r), & \hbox{if $a_0\le r\le 3a_0$;} \\
   \a, & \hbox{if $3a_0<r\le a_1$;}\\
1+\a-\rho(\frac{r}{a_1} ),& \hbox{if $a_1<r\le 3a_1$;}\\
 1, & \hbox{if $3a_1<r\le a_2$.}\\
  \end{array}
\right.
\eee
It is easy to see that $\hat\xi$ is smooth on $[0,a_2]$ with $\hat\xi(r)=1$ near $a_2$. Moreover, $\a\le\hat\xi\le 1$, and
\bee
\int_{a_0}^{a_2}\frac{\xi-\hat\xi}tdt=0.
\eee
so \eqref{eq-construction-1} is true for $i=0$. It is easy to see that
\bee
|\xi'|\le \frac {c_4}r
\eee
where $c_4= 3\max|\rho'|$.

For $a_0\le r\le a_1$, by the definition of $a_1$ we have
\bee
\int_{a_0}^{r}\frac{\xi-\hat\xi}tdt\le c_3.
\eee
For $a_1<r\le a_2$,
\bee
\begin{split}
\int_{a_0}^{r}\frac{\xi-\hat\xi}tdt=&
\lf(\int_{a_0}^{a_1}+\int_{a_1}^r\ri)\frac{\xi-\hat\xi}tdt\\
\le& c_3+\int_{a_1}^r\frac{\xi-1+1-\hat\xi}tdt\\
\le &c_3+\beta+(1-\a)\log 3.\\
\end{split}
\eee
Hence for $a_0\le r\le a_2$,
$$
\int_{a_0}^{r}\frac{\xi-\hat\xi}tdt\le 2c_3.
$$
Similarly, one can prove that
$$
\int_{a_0}^{r}\frac{\xi-\hat\xi}tdt\ge -2c_3.
$$
To summarize, we have find $\hat\xi(r)$ and $a_0<a_1<a_2$ such that $\hat\xi$ is smooth and defined on $[0,a_2]$ with  $\a\le \hat\xi\le1$, satisfying \eqref{eq-construction-1} with $i=0$, \eqref{eq-construction-2} with $i=0$, $c_1=2c_3$, and $|\hat\xi'|\le\frac{c_4}r$ on $[a_0,a_2]$. Moreover, $\hat\xi(r)=1$ near $r=a_2$.

From the above construction, it is easy to see that one can continue and find $a_2<a_3<a_4\dots\to\infty$ and $\hat\xi$ with $\a\le\hat\xi\le 1$ satisfying \eqref{eq-construction-1} with and   \eqref{eq-construction-2} with $c_1=2c_3$, and $|\hat\xi'|\le\frac{c_4}r$ on $[a_0,\infty)$.

Since $\hat\xi\le 1$,
$$
\hat h(r)\ge c_5\exp(-\int_1^r\frac1tdt)\ge \frac{c_5}r
$$
for some $c_5>0$ for all $r\ge 1$. Combing with the fact that $|\hat\xi'|\le\frac{c_4}r$ on $[a_0,\infty)$, we conclude that \eqref{eq-construction-3} is also true.

Suppose in addition $\xi$ satisfies \eqref{eq-main-2}. If $\int_0^r\frac{\xi}tdt$ is uniformly bounded from below, then one can take $\hat\xi\equiv0$.  If $\int_1^r\frac{\xi}tdt$ is not bounded from below
and $\int_1^r\frac{\xi-\a}tdt$ is not bounded from above, then one can proceed as in the proof of Case 3 in the above, by taking $\rho=0$ near $r=1$ instead. Then one can get $\hat\xi$ to be nonpositive.
This completes the proof of the Proposition.
\end{proof}

Now we are ready to prove Theorem \ref{mainthm}.
\begin{proof}[Proof of Theorem \ref{mainthm}] Let $\hat g$ be the $U(n)$
invariant \K metric with bounded curvature generated by $\hat \xi$ defined in
Proposition \ref{propxiequivalenttoboundedcurvature}, so that
\be\label{eq-long-1}
c_1^{-1}\hat g\le g_0\le c_1\hat g
\ee
for some $c_1>0$ as in Proposition \ref{propxiequivalenttoboundedcurvature}. As in the proof of Theorem \ref{mainthm-a}, choose   $\delta_k
> 0$ and  smooth function ``cutoff" functions $\eta_k:(-\infty, \infty) \to \R$
satisfying
\begin{equation}
\eta_k(r) : \begin{cases} =1 &\mbox{if } -\infty<r\leq k\\ 0<\eta_k(r)<1
&\mbox{if } k< r< k+\delta_k\\
  =0 &\mbox{if }  k+\delta_k\leq r<\infty. \end{cases}
\end{equation}
and \begin{equation}\label{deltak} \int_k^{k+\delta_k}   \lf|\frac{  (\xi-\hat
\xi)}{t}\ri| dt  \leq 1\end{equation}
for all $k$.  Let $\{\xi_k\}:[0, \infty)\to \infty$ be defined by
$$
\xi_k(r)=\eta_k\xi+(1-\eta_k)\hat{\xi}.
$$
Then as in the proof of Theorem \ref{mainthm-a}, each $\xi_k$ generates a $U(n)$ invariant \K metric $h_k$ so that

\be\label{eq-long-2}
c_2^{-1}\hat g\le h_k\le c_2\hat g
\ee
for some constant $c_2>0$, for all $k$. Now recall that the curvature of $\hat g$ is bounded by a constant $K$ as in
Proposition \ref{propxiequivalenttoboundedcurvature}, and thus by Theorem \ref{shishortime} we may assume without loss of generality that $\hat g$ has bounded geometry of order infinity.  In particular, the formula of curvature in Theorem
\ref{WZ} implies that each $h_k$ also
has bounded curvature.  We also clearly have $h_k \to g_0$ uniformly and smoothly on compact subsets of $M$.
By Theorem \ref{thmgequivalenttoboundedcurvature}, there is a solution $g(t)$
of the \KR flow with initial condition $g_0$ on $M\times[0,T)$ for some $T>0$ so
that
$$
|| \Rm(g(t))||^2_{g(t)}\le \frac{c_3}t
$$
for some $c_3>0$ and for all $0<t<T$. The estimates for $||\nabla^l\Rm||$ for each $l\geq 0$ then follows
from the general results of \cite{Shi1}.

If in addition that
$$
\int_0^\infty\frac{\xi}tdt<\sigma
$$
for some $\sigma$ for all $r$, then $\hat\xi$ can be chosen to be nonpositive. Then $\xi_k$ is nonpositive near infinity. Therefore the \KR flow with initial condition $h_k$ has longtime solution $h_k(t)$ by Theorem \ref{mainthm-a}. On the other hand,  $g_0\ge c_4 g_e$ for some constant $c_4>0$, where $g_e$ is the Euclidean metric on $\C^n$. By \eqref{eq-long-1} and \eqref{eq-long-2},
$$
h_k\ge c_5 g_e
$$
for some constant $c_5>0$ for all $k$. By Theorem \ref{thmgboundedbelowbyboundedcurvature},    that there exists a longtime solution $g(t)$ to \eqref{krf} with initial condition $g_0$. On $(0,T)$, $g(t)$ is the same as before. Hence  $g(t)$ also satisfies \eqref{curvatureestimates} on $\C^n\times(0, T)$.
\end{proof}

The long time existence results in Theorem \ref{mainthm} are basically for $U(n)$ invariant metrics with non-positive curvature.  The following Theorem gives a longtime existence result for $U(n)$ invariant metrics with non-negative curvature.
\begin{thm}\label{longtimenonnegativecurvature} Let $g_0$ be a smooth complete $U(n)$ invariant \K metric on $\C^n$
generated by a smooth function $\xi:[0, \infty) \to \R$ with $\xi(0)=0$.  Suppose $\xi(r)=a$ for $r$ sufficiently large where $a\le 1$.  Then the \KR flow has a smooth complete $U(n)$ invariant solution $g(t)$ on $\C^n\times [0, \infty)$ with $g(t)=g_0$. In general, if there is $C>0$ such that
\be\label{eq-Un-longtime-1}
-C\le \int_1^r\frac{\xi-a}tdt\le C
\ee
for some $a\le 1$ for all $r>1$ and such that $|\xi'|=o(r^{-a})$,  then the \KR flow has a smooth complete $U(n)$ invariant solution $g(t)$ on $\C^n\times [0, \infty)$ with $g(t)=g_0$ such that the curvature of $g(t)$ is uniformly bounded on $M\times[0,T]$ for all $T<\infty$.

\end{thm}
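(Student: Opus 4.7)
The strategy is to verify the hypotheses of the long-time existence Theorem \ref{longtimeexistence} and apply it directly to $g_0$. Since the case $\xi(r)=a$ for $r$ large is a special case of the general hypothesis, I focus on the latter. Condition (iii) (existence of a strictly plurisubharmonic function) is trivially satisfied by $F(z)=|z|^2$, whose complex Hessian is the standard Euclidean \K form. It remains to verify (i) curvature decay at infinity and (ii) a uniform positive lower bound on the injectivity radius of $g_0$.

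For (i), I first extract the asymptotic behavior of $h$. The hypothesis \eqref{eq-Un-longtime-1} gives $\int_0^r \xi(t)/t\,dt = a\log r + O(1)$ as $r\to\infty$, hence
\begin{equation*}
c_1 r^{-a} \le h(r) \le c_2 r^{-a}
\end{equation*}
for some constants $c_1, c_2 > 0$ and $r$ large. Combined with $|\xi'(r)| = o(r^{-a})$, this forces $|\xi'/h|\to 0$. Moreover, since $a\le 1$, $rf(r) = \int_0^r h(t)\,dt \ge c_1\int_0^r t^{-a}\,dt\to\infty$. Lemma \ref{lem-curvaturedecay}(ii) then yields $|\Rm(g_0)|(x)\to 0$ as $|z|\to\infty$; in particular, $g_0$ has uniformly bounded curvature. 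The special case $\xi(r)=a$ for $r$ large is also covered directly by Lemma \ref{ctredecay}.

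For (ii), I exploit the $U(n)$-invariance: since $U(n)$ acts by isometries, the injectivity radius depends only on $|z|$, is continuous in $|z|$, and is positive at every finite $|z|$ by smoothness of $g_0$. The uniform lower bound as $|z|\to\infty$ will come from the explicit doubly-warped product structure of $g_0$ on $\C^n\setminus\{0\}\cong(0,\infty)\times S^{2n-1}$, whose warping factors in the Hopf and complex-tangent directions of the sphere are governed by $h$ and $f$; together with the curvature decay from (i) and a volume-of-balls lower bound coming from the volume form $\det g_{i\bar j}=h\cdot f^{n-1}$, a standard Cheeger-type injectivity estimate delivers the required bound. With (i)--(iii) verified, Theorem \ref{longtimeexistence} produces a complete smooth solution $g(t)$ of the \KR flow on $\C^n\times[0,\infty)$ with $g(0)=g_0$ and curvature uniformly bounded on $\C^n\times[0,T]$ for every $T<\infty$; uniqueness \cite{CZ} then makes $g(t)$ $U(n)$-invariant for all $t$ since $g_0$ is.

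The main technical obstacle is the injectivity radius lower bound at infinity, especially in the borderline case $a=1$: there the Hopf-direction circumference of the spheres $|z|=R$ stays bounded as $R\to\infty$, while the complex-tangent directions become unbounded, and care is needed to rule out the emergence of short closed geodesics along the Hopf direction at large radii.
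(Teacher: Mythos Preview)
Your outline matches the paper's approach: reduce to Theorem \ref{longtimeexistence}, verify curvature decay via $h(r)\asymp r^{-a}$ and Lemma \ref{lem-curvaturedecay}(ii), and check the injectivity radius. Your treatment of (i) and (iii) is essentially the paper's.

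The one place your proposal is genuinely incomplete is the injectivity radius bound, which the paper isolates as Lemma \ref{injectivity} and proves in two separate ways depending on $a$. For $a<1$ the paper does carry out a volume argument of the type you sketch, but it is not immediate from $\det g_{i\bar j}=h f^{n-1}$: one computes $\tau\sim r^{(1-a)/2}$, shows $V_g\bigl(B_g(0,\tau+1)\setminus B_g(0,\tau-1)\bigr)\gtrsim \tau^{2n-1}$, and then uses a covering argument on the orbit sphere $\partial B_g(0,\tau)$, comparing intrinsic $g$-balls there with Euclidean spherical balls via $c^{-1}r^{-a}g_0\le g\le c\,r^{-a}g_0$, to bound $V_g(B_g(x,3))$ below uniformly. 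The $U(n)$-symmetry is used to count how many disjoint unit balls fit on each sphere.

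For $a=1$ the paper does \emph{not} attempt the volume route; your worry about short Hopf circles is exactly why. Instead it observes that any such $g_0$ is uniformly equivalent to Cao's cigar soliton $\tilde g$ on $\C^n$ (because $\int_1^r(\xi-\tilde\xi)/t\,dt$ stays bounded), and $\tilde g$ has bounded \emph{positive sectional curvature}; the Gromoll--Meyer theorem \cite{GM} then gives a uniform lower bound on $V_{\tilde g}(B_{\tilde g}(p,1))$, hence on $V_g(B_g(p,1))$, and \cite{CGT} finishes. You should incorporate this comparison; a direct Cheeger-type argument at $a=1$ would have to confront the Hopf collapse head-on.
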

\begin{rem} If $a\le0$, then we have long time solution by Theorem \ref{mainthm}. However, there is no curvature bound obtained for all $t$ in that theorem. In that theorem, we can only conclude that the curvature of the solution is uniformly bounded in $M\times[0,T]$ for some $T>0$.
\end{rem}
\begin{proof}
 Suppose \eqref{eq-Un-longtime-1} is true.  We want to prove that the curvature of $g$ tends to zero as $x\to\infty$. Consider the case that $ a<1$, then
$$
h\ge c_1 r^{-a}
$$
for large $r$ for some $c_1>0$. Hence $rf\ge c_2r^{1-a}$ for $r$ large for some $c_2>0$ and $rf(r)\to\infty$ as $r\to\infty$. $|\xi'|=o(r^{-a})$ implies
$|\frac{\xi'}{h}|=o(1).
$
By Lemma \ref{lem-curvaturedecay}, the curvature of $g_0$ approaches to zero at infinity.

Suppose $a=1$, then there is $c_3>0$ such that
$$
h\ge \frac{c_3}r
$$
for $r$ large. So
$$
rf\ge c_4\log r
$$
for some $c_4>0$ if $r$ is large. By Lemma \ref{lem-curvaturedecay}, the curvature of $g_0$ also approaches to zero at infinity.

 The Theorem now follows from the above curvature decay estimates, Lemma \ref{injectivity} below which implies the injectivity radius of $g$ is bounded below on $\C^n$, and Theorem \ref{longtimeexistence} because $\C^n$ has a strictly pluri-subharmonic function.
\end{proof}

\begin{lem}\label{injectivity} Let $\xi(r)=a$ for all $r$ sufficiently large and $a\leq 1$.  Let $g$ be the corresponding $U(n)$ invariant \K metric on $\C^n$. Then  the injectivity radius of $g$ is bounded below by a positive
constant on $\C^n$
\end{lem}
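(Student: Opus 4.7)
By the $U(n)$-invariance of $g$ the injectivity radius at a point depends only on $|z|$, and by smoothness of $g$ on $\C^n$ it is strictly positive on every compact set; so it suffices to prove $\liminf_{|p|\to\infty}\mathrm{inj}_{g}(p)>0$. The plan is to apply a Cheeger--Gromov--Taylor type estimate: on a complete Riemannian manifold with $|\Rm|\le K$ everywhere and $\mathrm{Vol}_g(B_g(p,r_1))\ge v_0$, the injectivity radius at $p$ is bounded below by a constant depending only on $K$, $r_1$, $v_0$ and the dimension.

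The curvature upper bound is already contained in the argument used for Theorem \ref{longtimenonnegativecurvature}. From $\xi(r)=a$ for $r\ge r_0$ one has $\xi'\equiv 0$ for large $r$ and, integrating the definition of $h$, $h(r)=c_0 r^{-a}$ for some $c_0>0$; since $a\le 1$ this forces $rf(r)=\int_0^r h\to\infty$. Lemma \ref{lem-curvaturedecay}(ii) then gives $|\Rm(g)|\to 0$ at infinity, so in particular $g$ has uniformly bounded curvature on $\C^n$.

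For the volume lower bound, write $p=(\rho,0,\ldots,0)$ with $r_p=\rho^2$ large. I would exhibit an explicit Euclidean neighbourhood $E_p\ni p$ with $E_p\subset B_g(p,r_1)$ for some fixed $r_1>0$ and estimate $\mathrm{Vol}_g(E_p)=\int_{E_p}h(r)f(r)^{n-1}\,dV_{\mathrm{Euc}}$ from below uniformly in $\rho$. The shape of $E_p$ is dictated by the behaviour of the two eigenvalues $h(r_p)$ (in the $z_1$-direction) and $f(r_p)$ (in the $z_j$-directions, $j\ge 2$) of $g$ at $p$. If $a<1$, both eigenvalues are of order $r_p^{-a}$, and one may take $E_p$ to be the Euclidean ball of radius $\epsilon r_p^{a/2}$ centred at $p$; the integrand $hf^{n-1}$ is of order $r_p^{-an}$ and cancels the Euclidean volume of order $r_p^{an}$, giving a uniform lower bound $\mathrm{Vol}_g(E_p)\ge c\,\epsilon^{2n}$. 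If $a=1$, then $h(r)$ is of order $1/r$ while $f(r)$ is of order $(\log r)/r$, and $E_p$ has to be taken as the anisotropic ellipsoid $\{|z_1-\rho|<\delta\rho,\ |z_j|<\delta\rho/\sqrt{\log r_p}\ \text{for}\ j\ge 2\}$; an analogous cancellation between the factors of $\rho$ and $\log r_p$ in the Euclidean volume of $E_p$ and in the integrand again produces a uniform positive lower bound on $\mathrm{Vol}_g(E_p)$.

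The main obstacle is the $a=1$ case: on $E_p$ the quantity $r$ varies by a multiplicative constant, so $h$ and $f$ can no longer be treated as essentially constant. One has to establish comparisons like $h(r)/h(r_p)$ and $f(r)/f(r_p)$ being bounded above and below on $E_p$, together with a bound on the $g$-diameter of $E_p$ (which reduces to checking $\sqrt{h(r_p)}\cdot\delta\rho=O(\delta)$ and $\sqrt{f(r_p)}\cdot\delta\rho/\sqrt{\log r_p}=O(\delta)$). Once these estimates are in place, the combination of the uniform curvature bound and the uniform volume lower bound yields $\mathrm{inj}_g(p)\ge c>0$ for $|p|$ large via Cheeger--Gromov--Taylor, completing the proof.
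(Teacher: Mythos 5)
Your proposal is correct in outline but follows a genuinely different route from the paper's proof, and the difference is worth noting. Both you and the paper reduce to producing a uniform lower bound on $V_g(B_g(p,r_1))$ and a uniform curvature bound, then invoke Cheeger--Gromov--Taylor. The curvature decay is handled the same way (via Lemma \ref{lem-curvaturedecay}). The divergence is in the volume lower bound. For $a<1$ the paper computes the $g$-volume of an annulus $B_g(0;\tau+1)\setminus B_g(0;\tau-1)$ exactly from the explicit formula $V_g(B_g(0;\tau))=c_n(rf)^n$, covers it by a maximal disjoint family of $g$-balls centered on the sphere $\partial B_g(0,\tau)$, uses $U(n)$-invariance to make all those balls have the same volume $v$, and then bounds the cardinality $N$ of the family from above by a Euclidean packing estimate on $\partial B_0(0,\sqrt r)$; this squeezes $v$ from below. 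For $a=1$ the paper sidesteps the direct estimate entirely: it quotes Wu--Zheng's identification of Cao's cigar soliton as the metric generated by a certain $\tilde\xi$ with $\int_0^\infty|\xi-\tilde\xi|/t\,dt<\infty$, deduces that $g$ is uniformly equivalent to the cigar, and then cites Gromoll--Meyer to bound the cigar's injectivity radius (using its complete bounded positive curvature). Your approach instead constructs an explicit Euclidean region $E_p$ adapted to the anisotropy of $g$ near $p$, bounds its $g$-diameter, and integrates $hf^{n-1}$ over it; the cancellation you observe ($\delta^{2n}$ in both cases) is correct, and the ``obstacle'' you flag for $a=1$ (that $r$ varies by a multiplicative constant on $E_p$) is in fact benign, since $h\sim 1/r$ and $f\sim(\log r)/r$ only change by bounded factors under bounded multiplicative changes in $r$; the more substantive point you would need to make precise is that the eigenspace of $g$ carrying the large eigenvalue $h$ rotates only by $O(\delta/\sqrt{\log r_p})$ over $E_p$, so that the constant form $g(p)$ controls $g$ on all of $E_p$. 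Your method has the merit of being self-contained and handling both cases $a<1$ and $a=1$ uniformly, while the paper's $a=1$ argument is shorter by outsourcing the volume bound to a known model metric.
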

\begin{proof}
  We begin by assuming $a<1$.  Indeed, this will be sufficient for our applications.  By the estimate in \cite{CGT} and by the fact that the curvature of $g$ is
bounded by Lemma \ref{ctredecay}, in order to prove the injectivity radius of
$g$ is positive on $\C^n$ it is sufficient to prove
 there is a constant $c>0$ such that
 $$
 V_g(B_g(x,1))\ge c
 $$
 for all $x$ where $B_g(x,1)$ is the geodesic ball of radius 1 with center at
$x$ with respect to $g$. Let $\tau$ be the geodesic distance from the origin,
then for $a<1$ and   $r=|z|^2>r_0$.
\be\label{eq-distance-1}
\tau(z)=\int_0^r\frac{\sqrt h}{2\sqrt s}ds=c_1+c_2r^{\frac12(1-a)}
\ee
for some constants $c_1$, $c_2$ with $c_2>0$.   So
$$
rf(r)=c_3+c_4(\tau-c_1)^2
$$
with $c_4>0$.
$$
V(B_g(0;\tau))=c_n(rf)^n=\lf(c_3+c_4(\tau-c_1)^2\ri)^n.
$$
where $\tau=\tau(r)$ is given by \eqref{eq-distance-1}.
Hence if $\tau$ is large, then
\be\label{eq-vol-1}
V_g(B_g(0;\tau+1)\setminus B_g(0;\tau-1))\ge c_5\tau^{2n-1}
\ee
for some $c_5>0$ independent of $\tau$. Let $\mathcal{F}$ be a maximal disjoint
family of $B_g(x,1)$ with $x\in \p B_g(0,\tau)$. Let $\mathcal{C}=\{x|\
B_g(x,1)\in  \mathcal{F}$ and let $N=N(\tau)=\#(\mathcal{C})$.
We claim that $\bigcup_{x\in \mathcal{C}}B_g(x,3)\supset B_g(0;\tau+1)\setminus
B_g(0;\tau-1)$. In fact, if $y\in B_g(0;\tau+1)\setminus B_g(0;\tau-1)$, then
there is $y'\in \p B_g(0;\tau)$ such that $d_g(y,y')<1$. On the other hand,
there is $ x\in \mathcal{C}$ with $d_g(x,y')<2$. From these the claim
followers.

Since $g$ is $U(n)$ invariant, $v=v(\tau)=V_g(B_g(x,3))$ is constant for $x\in
\p B_g(0,\tau)$. Hence we have
\bee
Nv\ge c_5\tau^{2n-1}
\eee
and
\be\label{eq-vol-2}
v\ge \frac{c_5}N\tau^{2n-1}.
\ee
By the expressions of $h$ and $f$, on $\p B_g(0,\tau)=\p B_0(0,\sqrt r)$,
$c_6^{-1}r^{-a}g_0\le g\le c_6r^{-a}g_0$  for some $c_6>0$ if $r$ is large,
where $B_0(0,\sqrt r)$ is the Euclidean ball with radius $\sqrt r$ and center
at the origin. Let $B_g^\tau(x,\rho)$ be the geodesic ball with respect to the
intrinsic distance of $\p B_g(0,\tau)$. Define $B_0^\tau(x,\rho)$ similarly
with respect to $g_0$.

Since $B_g(x,1)\supset B_g^\tau(x,1)$.
and $B_0^\tau(x,c_6^{-1}r^{\frac  a2})\subset B_g^\tau(x,1)$. Hence
$\{B_0^\tau(x,c_6^{-1}r^{\frac  a2})| x\in \mathcal{C}\}$ is a disjoint family.
Hence
$$
NV_{g_0}(B_0^\tau(x,c_6^{-1}r^{\frac  a2}))\le V_{g_0}(\p B_0(0,\sqrt
r))=c_nr^{\frac{2n-1}2}.
$$
where $c_n$ is the volume of the unit sphere in $\C^n$. Let $\rho=r^\frac12$,
then the volume of the geodesic ball of radius $s_0$ in $\p B_0(0,\rho)$ is
$$
c_n\rho^{n-2}\int_0^{s_0}\sin^{2n-2}\frac s\rho ds.
$$
where $c_n$ is a positive constant depending on $n$. Let $s_0=c_6^{-1}r^{\frac
a2})$. Then $s_0/\rho\to0$ as $r\to\infty$.  Hence for $r$ large,
 \bee
 \begin{split}
 V_{g_0}(B_0^\tau(x,c_6^{-1}r^{\frac  a2}))\ge &c_7\int_0^{s_0}  s ^{2n-2} ds\\
 =&c_8  s_0^{2n-1}.
 \end{split}
 \eee
Hence
\bee
\begin{split}
v\ge& c_5N^{-1}\tau^{2n-1}\\
\ge& c_n^{-1}c_5c_8\tau^{2n-1} r^{-\frac{2n-1}2}   s_0^{2n-1}\\
\ge& c_9
\end{split}
\eee
 for some positive constant $c_9$ independent of $\tau$.

 We now consider the case when $a=1$.  Consider Cao's cigar soliton $\tilde{g}$ which is a complete $U(n)$ invariant \K metric on $\C^n$ .  It is shown in \cite{WZ} $\tilde{g}$ has positive sectional curvatures and is generated by $\txi$ satisfying

\be\label{eq-cigar-1}
\int_0^\infty\frac{\xi-\txi}t dt<\infty
\ee
since $\xi(r)=1$ for sufficiently large $r$ (see Theorem 3 in  \cite{WZ}).

In particular, by \eqref{hf} and \eqref{g} it follows that $g$ and $\tilde{g}$ are uniformly equivalent and thus $$V_{g}(B_g(p, 1))\geq C V_{\tilde{g}}(B_{\tilde g}(p, 1))$$ for some $C>0$
for all $p\in \C^n$ and for some constant $C$ independent of $p$.  To bound the injectivity radius of $g$ from below, it suffices to prove that the volume in the RHS above is uniformly bounded below.  This follows from \cite{GM} since $\tilde{g}$ is complete, and has bounded positive sectional curvatures.  For completeness, we include a proof below that $\tilde{g}$ has bounded curvature.  By Wu-Zheng:

Let $\tilde{\phi}=r\tilde{f}$ and $t=\log r$.   $\tilde{\phi}'=r\tilde{h}$. Hence $\tilde{\phi}>0$, $\tilde{\phi}'>0$ for $t>-\infty$. Here all primes on $\tilde{\phi}$ are with respect to $t$. Since $A, B, C>0$, we only need to prove that $A, B, C$ are bounded from above. It is sufficient to prove that $A, B, C$ are bounded from above for $t\ge0$. For $t\ge 0$, by \cite[\S4]{WZ}

$$
A=n(1+\frac{n-1}\tphi)-\tphi'\lf(1+\frac{2(n-1)}\tphi+\frac{n(n-1)}{\tphi^2}\ri)\le n(1+\frac{n-1}{\tphi(0)}),
$$
because $\tphi'>0$, $\tphi>0$. So $A$ is bounded.
$$
B=\frac{1}{(r\tilde{f})^2}\int_0^r\frac{d\txi}{dr}\lf(\int_0^t \tilde{h}(s)ds\ri)dt \le \frac{1}{r\tilde{f}}
$$
because $\frac{d\txi}{dr}>0$, $\txi(r)\le 1$. On the other hand by \eqref{eq-cigar-1}, $\tilde{h}(r)\ge cr^{-1}  $ for $r\ge 1$. Hence $r\tilde{f}\sim c\log r$. So $B$ is bounded. Similarly, $C$ is also bounded.

 \end{proof}
 \begin{rem}\label{rem-injectivity} In case $1>a\ge0$, we may simply compare $g$ with a metric $\tilde g$ with nonnegative bisectional curvature generated by $\tilde\xi$ with $\tilde \xi=a$ near infinity. In this case, $\tilde g$ has maximum volume growth by \cite{WZ}. Hence each geodesic ball of radius 1 is bounded below by a constant which is uniform for all points. So this is also true for $g$.
 \end{rem}

\end{document}